\newtheorem{thm}{Theorem}[section]
\newtheorem{prop}[thm]{Proposition}
\newtheorem{lem}[thm]{Lemma}
\newtheorem{cor}[thm]{Corollary}
\newtheorem{defn}[thm]{Definition}
\newtheorem{rema}{Remark}[section]
\def\mb{\mathbf}
\def\mr{\mathrm}
\def\mc{\mathcal}
\newlength{\equwidth}
\newcommand{\crd}{\mbox{$                                     
\begin{picture}(9,8)(1.6,0.15)
\put(1,0.2){\mbox{$ D \hspace{-7.8pt} /$}}
\end{picture}$}}
\def\dirac{\crd}
\DeclareMathOperator{\PP}{P}
\def\A{\mathcal A}
\def\D{\mathcal D}
\def\X{\mathfrak X}
\def\al{\alpha}
\def\ga{\gamma}
\def\de{\delta}
\def\rh{\rho}
\def\si{\sigma}
\def\Ups{\Upsilon}
\def\ph{\varphi}
\def\om{\omega}
\def\Ga{\Gamma}
\def\De{\Delta}
\def\Th{\Theta}
\def\La{\Lambda}
\def\Om{\Omega}
\def\t{\otimes}
\def\goesto{\rightarrow}
\def\embed{\hookrightarrow}
\def\squig{\rightsquigarrow}
\def\cinf{\ensuremath{\mathrm{C}^\infty}}
\def\na{\mathrm{\nabla}}
\def\G{\mathcal{G}}
\def\D{\mathcal{D}}
\DeclareMathOperator{\End}{End}
\DeclareMathOperator{\GL}{GL}
\DeclareMathOperator{\Spin}{Spin}
\DeclareMathOperator{\CSpin}{CSpin}
\DeclareMathOperator{\Sp}{Sp}
\def\CO{\mathrm{CO}}
\def\ti{\tilde}
\def\rr{\ensuremath{\mathbb{R}}}
\def\cc{\ensuremath{\mathbb{C}}}
\def\SL{\ensuremath{\mathrm{SL}}}
\def\SO{\ensuremath{\mathrm{SO}}}
\def\GL{\ensuremath{\mathrm{GL}}}
\def\so{\ensuremath{\mathfrak{so}}}
\def\g{\ensuremath{\mathfrak{g}}}
\def\p{\ensuremath{\mathfrak{p}}}
\def\ce{\ensuremath{\mathcal{E}}}
\newcommand{\bg}{\mbox{\boldmath{$ g$}}}
\DeclareMathOperator{\Hol}{Hol}
\def\today{\ifcase\month\or
 January\or February\or March\or April\or May\or June\or
 July\or August\or September\or October\or November\or December\fi
 \space\number\day, \number\year}
\def\mb{\mathbf} 
\def\mr{\mathrm}
\def\mc{\mathcal}
\title{The twistor spinors of generic $2$- and $3$-distributions} \author{Matthias
Hammerl and Katja Sagerschnig} \email{matthias.hammerl@univie.ac.at,
katja.sagerschnig@univie.ac.at} \address{Faculty of Mathematics,
University of Vienna, Nordbergstra\ss e~15, A--1090 Wien, Austria}
\date{\today}
\subjclass[2000]{34A26, 35N10, 53A30, 53B15, 53B30} 
\keywords{generic distributions, conformal geometry, spin geometry, twistor spinors,
Fefferman-type constructions, conformal Killing fields, almost Einstein
scales}
\begin{document}

\maketitle

\begin{abstract} 
Generic distributions on $5$- and $6$-manifolds give
rise to conformal structures that were discovered by P. Nurowski
resp. R. Bryant.
We describe both as Fefferman-type constructions and
show that for orientable distributions one obtains conformal spin
structures.
The resulting conformal spin geometries are then characterized
by their conformal holonomy and equivalently by the existence of a
twistor spinor which satisfies a genericity condition.
Moreover, we show that given such a  twistor spinor
we can decompose a conformal Killing field of the structure.
We obtain explicit formulas relating conformal Killing fields,
 almost Einstein structures  and  twistor spinors.
\end{abstract}

\section{Introduction}

It was found by P. Nurowski \cite{nurowski-metric} that one can naturally associate to
every generic $2$-distribution on a $5$-manifold a conformal structure
of signature $(2,3)$, and a similar observation has been made by
R. Bryant \cite{bryant-3planes}, who showed that there is a natural
conformal $(3,3)$-structure associated to every generic $3$-distribution in
dimension $6$.
These treatments employ  Cartan's method of equivalence and explicitly
prolong the equations defining the distributions to a \emph{Cartan\ connection\
form}, which is then seen to induce a \emph{conformal\ class\ of\ metrics}
on the underlying manifold. \cite{nurowski-metric}\ and \cite{bryant-3planes}\
also show that the induced conformal structures have special
holonomies $G_2\subset\SO(3,4)$ and $\SO(3,4)\subset \SO(4,4)$. The necessary computations are not
easily accessible and quite complicated when done in full detail.
The approach taken here is to deal with these two constructions by descriptions  as \emph{Fefferman-type} constructions \cite{cap-constructions}. This viewpoint is useful for focusing on the essential algebraic relations
between the structure groups of the geometries in question.
The treatment via parabolic geometry conveniently shows that conformal structures associated to these distributions admit
non-trivial solutions to certain overdetermined systems of PDEs and it uncovers relations between solution spaces - this is
 similar to the classical Fefferman spaces \cite{cap-gover-holonomy-cr,cap-gover-cr-tractors}.

The purpose of this text is twofold. First, it provides a complete discussion of the Fefferman-type construction for a generic rank
 $3$-distribution in dimension $6$. Second, it details and extends a relation to spin geometry that was  found in \cite{mrh-thesis} for  generic
rank $2$-distributions: We show that the twistor spinor attached to a generic rank $2$ or $3$ distribution is a very convenient encoding 
of the  distribution and makes the special properties of the induced conformal structure easily visible. 
The Fefferman-type construction for
generic rank $2$ distributions has been treated by the authors in \cite{mrh-sag-rank2}. The new treatment here via the relation to spin geometry
yields considerable simplifications.

The structure of this paper is as follows:
In section 2 we discuss conformal spin structures of signature $(2,3)$ and $(3,3)$\ and
how twistor spinors satisfying a genericity condition on such structures give rise
to generic distributions and conformal holonomy reductions.\\
In section 3 we show that \emph{all} conformal spin structures of signature $(2,3)$\ and $(3,3)$
admitting such a generic twistor spinor are induced by a Fefferman-type construction
from a generic rank $2$- resp. $3$- distribution.\\
In section 4 we show that this twistor spinor can be used to decompose the conformal
Killing fields of the induced conformal spin structure into a symmetry of the distribution
and, in one case an almost Einstein scale, and in the other case a twistor spinor which is orthogonal in a suitable sense.

\subsection*{Acknowledgements}
As always, the authors have benefited from many discussions with Andreas \v{C}ap. Both authors gladly acknowledge support from
 project P 19500-N13 of the "Fonds zur
F\"{o}rderung der wissenschaftlichen Forschung" (FWF).
In the addition, the first author was supported by the IK I008-N funded by the
University of Vienna, and the second author was supported by
a L'Or\'{e}al Fellowship "For Women in Science".

\section{Generic twistor spinors on conformal spin structures of signature $(2,3)$\ and $(3,3)$}

\subsection{Conformal spin structures}\label{conformalspinstr}

A\emph{ conformal\ structure} of
signature $(p,q)$ on an $n=p+q$-dimensional
manifold $M$ is an equivalence class $\mc{C}$ of
pseudo-Riemannian metrics with two metrics $g$ and $\hat{g}$ being
equivalent if $\hat{g}=\mr{e}^{2f}g$ for a function $f\in
C^{\infty}(M)$.  Suppose we have a manifold with a conformal structure
of signature $(p,q)$.  Let $\mathcal{G}_0$ be the associated conformal
frame bundle with structure group the conformal group $\CO_o(p,q)=\rr_+\times \SO_o(p,q)$
preserving both orientations.  Then a \emph{conformal spin structure}
on $M$ is a reduction of structure group of $\mathcal{G}_0$ to
$\CSpin(p,q)=\mathbb{R}_{+} \times \Spin(p,q)$.

It will be useful to employ abstract index
notation, \cite{penrose-rindler-87}: we write $\ce_a=\Om^1(M),\ce^a=\X(M)$ and
multiple indices as in $\ce_{ab}=T^*M\t T^*M$\ denote tensor products.
To write conformally invariant objects we will also need the \emph{conformal density bundles} $\ce[w]$,
which are the line
bundles associated to the $1$-dimensional representations $(c,C)\mapsto
c^r\in\rr_+$\ of $\CSpin(p,q)=\rr_+\times \Spin(p,q)$.  The tensor
product of a bundle $\mc{V}$\ with $\ce[w]$\ will be denoted
$\mc{V}[w]$. 
We note that the conformal class of metrics $\mc{C}$\ gives rise to a
canonical \emph{conformal metric} $\bg\in\ce_{(ab)}[2]$\ which is used
to identify $\ce^a$\ with $\ce_a[2]$.

Let us briefly introduce the main curvature quantities of the conformal
structure $\mc{C}$ (cf. \cite{eastwood-notes-conformal}.)
For $g\in\mc{C}$, let
\begin{align*}
  \mr{P}_g:=\frac{1}{n-2}(\mr{Ric}_g-\frac{\mr{Sc}_g}{2(n-1)}g)
\end{align*}
be the \emph{Schouten\ tensor}; this is a trace modification
of the Ricci curvature $\mr{Ric}_g$\ by a multiple
of the scalar curvature $\mr{Sc}_g$. The trace of
the Schouten tensor is denoted $J_g=g^{pq}\PP_{pq}$.
We will omit the
subscripts $g$\ hereafter when giving a formula
with respect to some $g\in\mc{C}$.

It is well known that the complete obstruction against
conformal flatness of $(M,\mc{C})$\ with, $\mc{C}$\ having
signature $p+q\geq 3$, is the \emph{Weyl curvature}
\begin{align*}
  C_{ab\; d}^{\;\;\; c}:=R_{ab\; d}^{\;\;\; c}-2\de_{[a}^c\PP_{b]d}+2\bg_{d[a}\PP_{b]}^{\; c},
\end{align*}
where indices between square brackets are skewed over.

We now introduce the basic ingredients of \emph{tractor calculus}\ for
conformal structures (\cite{thomass}). The tractor bundles are to be introduced
as equivalence classes now, and alternatively as associated bundles to
the Cartan structure bundle of a conformal spin structure in the next section \ref{Feffermanconstruction}.

\subsubsection{The standard tractor bundle}\label{secstd}
The standard tractor bundle $\mc{T}$ of a conformal structure $(M,\mc{C})$\ is
defined as an equivalence class of bundles $[\mc{T}]_g,g\in \mc{C}$:
For a given metric $g$\ in the conformal class, we define the
direct sum bundle
$
  [\mc{T}]_g:=
  \begin{pmatrix}
    \ce[-1]\\
    \ce_a[1] \\
    \ce[1]
  \end{pmatrix},
$
and a section
$
  [s]_g=
  \begin{pmatrix}
    \rh \\
    \ph_a \\
    \si
  \end{pmatrix}
  \in \Ga([\mc{T}]_g)
$
corresponds
to the section
$
          [s]_{\hat g}=
        \begin{pmatrix}
            \hat\rh \\
            \hat\ph_a \\
            \hat\si
        \end{pmatrix}=
        \begin{pmatrix}
            \rh-\Ups_a\ph^a-\frac{1}{2}\si\Ups^b\Ups_b \\
            \ph_a+\si \Ups_a  \\
            \si
        \end{pmatrix}
$
for $\hat g=\mr{e}^{2f}g$, $\Ups=df$.
The standard tractor bundle carries the invariant tractor metric
$
  [\bf{h}]_g=
  \begin{pmatrix}
    0 & 0 & 1 \\
    0 & \bg & 0 \\
    1 & 0 & 0
  \end{pmatrix},
$
which is compatible with the standard tractor connection
\begin{align}\label{sttracon}
  [\na^{\mc{S}}_c 
  \begin{pmatrix}
    \rh \\
    \ph_a \\
    \si
  \end{pmatrix}]_g
  =
  \begin{pmatrix}
    D_c \rh-\mr{P}_{c}^{\; b}\ph_b \\
    D_c\ph_a+\si \mr{P}_{ca}+\rh \bg_{c a}\\
    D_c \si-\ph_c
  \end{pmatrix}.
\end{align}

\subsubsection{The spin tractor bundle}
In the case where $(M,\mc{C})$\ is a conformal spin structure, we
define the \emph{weighted\ conformal spin bundle}\ of $(M,\mc{C})$\
as
\begin{align*}
  S[\frac{1}{2}]:=\G_0\times_{\CSpin(p,q)}\De^{p,q}[\frac{1}{2}].
\end{align*}
Then we have the \emph{conformal Clifford symbol}
$
  \ga\in \Ga(\End(TM) \t S[1]).
$
For $\xi\in\X(M)$\ and $\chi\in\Ga(S[\frac{1}{2}])$\ we will write Clifford multiplication also as
$\xi\cdot\chi=\ga(\xi)\chi$.

We define the \emph{spin tractor bundle}\ of $\mc{C}$\ again
as an equivalence class over $\mc{C}$:
with respect to $g$\ it is the direct sum of weighted spin bundles
$
  [\mc{S}]_g:=
  \begin{pmatrix}
    S[-\frac{1}{2}] \\
    S[\frac{1}{2}]
  \end{pmatrix},
$
and a section
$
  [X]_g=
  \begin{pmatrix}
    \tau \\
    \si
  \end{pmatrix}
  \in [\mc{S}]_g
$
corresponds to
$
  [X]_{\hat{g}}=
  \begin{pmatrix}
    \tau+\frac{1}{2}\PP_{cp}\ga^p \chi\\
    \chi
  \end{pmatrix}
  \in [\mc{S}]_{\hat{g}}.
$
for $\hat g=\mr{e}^{2f}g$, $\Ups=df$.

Indeed, $\mc{S}$\ is the Clifford representation of $\mc{T}$:
This is seen by introducing the Clifford action
\begin{align}\label{traCli}
    \begin{pmatrix} \rh \\ \ph_a \\ \si
  \end{pmatrix} \cdot
  \begin{pmatrix} \tau \\ \chi
    \end{pmatrix}
    =
  \begin{pmatrix} -\ph_a\cdot\tau-\sqrt{2}\rh \chi \\
\ph_a\cdot\chi+\sqrt{2}\si \tau
  \end{pmatrix}.
\end{align}
It is easy to compute directly (see e.g.\cite{mrh-thesis}) that indeed
$s\cdot s\cdot X=-\mb{h}(s,s)X$\ for all $s\in\Ga(\mc{T}),X\in\Ga(\mc{S})$,
and that this action is well defined.

$\mc{S}$\ carries the spin tractor connection that is induced from
the standard tractor connection on $\mc{T}$:
\begin{align*}
  [\na^{\mc{S}}_c
  \begin{pmatrix}
    \tau \\
    \chi
  \end{pmatrix}
  ]_g
  =
  \begin{pmatrix}
    D_c\tau+\frac{1}{\sqrt{2}}\PP_{cp}\ga^p\chi\\
    D_c\chi+\frac{1}{\sqrt{2}}\ga_c\chi
  \end{pmatrix}.
\end{align*}

\begin{defn} \label{confhol}
The \emph{conformal holonomy} of a conformal spin structure $\mc{C}$ is defined as 
  \begin{align}\label{defhol}
\Hol(\mc{C}):=\Hol(\nabla^{\mc{T}})=\Hol(\nabla^{\mc{S}})\subset\Spin(p+1,q+1).
\end{align}
\end{defn}

\subsection{The twistor spinor equation and its prolonged form}

For a given metric $g\in\mc{C}$\ the \emph{Dirac operator} is
defined as
\begin{align*}
  &\crd:\Ga(S)\goesto \Ga(S),\ 
  \crd:=\ga^pD_p,
\end{align*}
and is used to define the \emph{twistor operator} (cf. e.g. \cite{baum-friedrich-twistors})
\begin{align*}
  &\Th:\Ga(S[\frac{1}{2}])\goesto \Ga(T^*M\t S[\frac{1}{2}]),\\
  &\Th(\chi):=D\chi+\frac{1}{n}\gamma\crd\chi;
\end{align*}
Alternatively, $\Th$\ is described as the projection of the
Levi-Civita derivative of a spinor to the kernel of Clifford multiplication.
The definition of $\Th$\ with respect to the
appropriately weighted spinor spaces used here is independent of the
choice of $g\in\mc{C}$, i.e., it is a \emph{conformally invariant} linear
differential operator.
An element in the kernel of $\Th$\ is called a \emph{twistor spinor},
and we denote the space of twistor spinors by $\mb{Tw}_{\mc{C}}:=\ker\Th$.
By the following result, the space of twistor spinors is always
finite dimensional:

\begin{prop}[\cite{friedrich-conformalrelation},\cite{baum-friedrich-twistors},\cite{branson-spin},\cite{felipe-habil},\cite{mrh-thesis}]
\label{prop-spin} 

Let $\Pi_0:\mc{S}\goesto\De[\frac{1}{2}]$\ be the (well-defined) projection from
the spin tractor bundle to the lowest slot.

$\Pi_0$\ induces an isomorphism of the space of $\na^{\mc{S}}$-parallel
sections of $\mc{S}$\ with the space of twistor spinors $\mb{Tw}_{\mc{C}}$\ in
$\Ga(S[\frac{1}{2}])$.
Its inverse is the conformally invariant differential splitting operator
$L_0^{\mc{S}}:\Ga(S[\frac{1}{2}])\goesto \Ga(\mc{S})$\ that
is defined with respect to a $g\in\mc{C}$\ by
\begin{align}\label{twisplit} 
   \chi\mapsto
  \begin{pmatrix} 
    \frac{\sqrt{2}}{n}\crd\chi \\ 
    \chi
  \end{pmatrix}.
\end{align}
\end{prop}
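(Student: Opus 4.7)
The plan is to prove the proposition by verifying three facts in sequence: the conformal invariance of the formula defining $L_0^{\mc{S}}$, that parallel sections of $\mc{S}$ have a twistor spinor in their lowest slot, and conversely that $L_0^{\mc{S}}$ sends a twistor spinor to a parallel section.

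First I would check that $L_0^{\mc{S}}(\chi)=\bigl(\tfrac{\sqrt{2}}{n}\crd\chi,\,\chi\bigr)$ is a well-defined section of the invariant bundle $\mc{S}$, independent of the choice of $g\in\mc{C}$. Using the standard formula for the conformal change of the Dirac operator acting on $S[\tfrac{1}{2}]$ together with the transformation rule of the spin tractor bundle recalled above, one verifies that the inhomogeneous term produced in $\tfrac{\sqrt{2}}{n}\crd_{\hat g}\chi$ matches precisely the prescribed transformation of the top slot, so the pair indeed descends to $\mc{S}$.

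Second, given a parallel section $X=\bigl(\tau,\chi\bigr)\in\Ga(\mc{S})$, the parallel condition in a fixed metric reads
\begin{align*}
  D_c\chi+\tfrac{1}{\sqrt{2}}\ga_c\tau &= 0,\\
  D_c\tau+\tfrac{1}{\sqrt{2}}\PP_{cp}\ga^p\chi &= 0.
\end{align*}
Contracting the first equation with $\ga^c$ and using $\ga^c\ga_c=-n$ forces $\tau=\tfrac{\sqrt{2}}{n}\crd\chi$, so $X=L_0^{\mc{S}}(\chi)$. Reinserting this expression into the first equation yields the twistor equation $D_c\chi+\tfrac{1}{n}\ga_c\crd\chi=0$. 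Together with the identity $\Pi_0\circ L_0^{\mc{S}}=\Id$ on $\Ga(S[\tfrac{1}{2}])$, which is immediate from the formula for $L_0^{\mc{S}}$, this shows that $\Pi_0$ restricts to a bijection between parallel sections and a subspace of $\mb{Tw}_{\mc{C}}$.

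For the converse, given $\chi\in\mb{Tw}_{\mc{C}}$, I would verify that $L_0^{\mc{S}}(\chi)$ is parallel. Its bottom-slot $\cov^{\mc{S}}$-derivative is precisely the twistor equation and vanishes by hypothesis. The top slot reduces, after rescaling, to the identity
\begin{align*}
  D_c\crd\chi+\tfrac{n}{2}\PP_{cp}\ga^p\chi=0,
\end{align*}
and establishing this is the main technical step. The approach is to expand $D_c\crd\chi=\ga^pD_cD_p\chi$, split $D_cD_p$ into its symmetric and antisymmetric parts, and use the spin curvature formula $[D_c,D_p]\chi=\tfrac{1}{4}R_{cpab}\ga^a\ga^b\chi$. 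The symmetric part is handled by reapplying the twistor equation $D_p\chi=-\tfrac{1}{n}\ga_p\crd\chi$; the curvature commutator is then decomposed into its Weyl and Schouten contributions, with the Weyl term dropping out by standard Clifford-algebraic identities so that only the Schouten contribution survives. Finiteness of $\mb{Tw}_{\mc{C}}$ then follows automatically, as it is identified with the space of parallel sections of the finite-rank vector bundle $\mc{S}$.
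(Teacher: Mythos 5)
The paper offers no proof of this proposition; it is imported from the cited literature (Friedrich, Baum et al., Branson, Leitner, and the first author's thesis). Your outline is the standard argument given in those sources and is correct: you rightly read the bottom slot of the connection formula as $D_c\chi+\tfrac{1}{\sqrt{2}}\ga_c\tau$ (fixing an evident typo in the paper's display), the contraction with $\ga^c$ forces $\tau=\tfrac{\sqrt{2}}{n}\crd\chi$, and the converse rests on the classical integrability identity $D_c\crd\chi+\tfrac{n}{2}\PP_{cp}\ga^p\chi=0$, in whose derivation the Weyl contribution indeed vanishes by the first Bianchi identity and trace-freeness.
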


In particular, one sees that the existence of a twistor spinor
reduces the holonomy of the conformal spin structure.
\
\subsection{Conformal spin structures of signature $(2,3)$}\label{sec23}
We take some $g_{2,3}$\ be some signature $(2,3)$-bilinear form on $\rr^5$\
and write $\rr^{2,3}$\ for $\rr^5$\ endowed with this form.
It is well known (due to \cite{cartan-spinors}, also cf. \cite{lounesto-clifford},\cite{korman-sparling-fierz}) that
the real, $4$-dimensional spin representation
$\De^{2,3}$\ of $\Spin(2,3)=\Spin(g_{2,3})$
carries a unique skew-form $b_{2,3}\in\La^2(\De^{2,3})^*$
that satisfies
\begin{align}\label{compatb23}
  b_{2,3}(\xi\cdot \chi,\tau)-b_{2,3}(\chi,\xi\cdot \tau)=0
\end{align}
for all $\xi\in\rr^{2,3},\chi,\tau\in\De^{2,3}$.
It follows in particular that $b_{2,3}$\ is invariant under $\Spin(2,3)$,
and this realizes the isomorphism $\Spin(2,3)\cong\Sp(4,\rr)$.
The corresponding skew-symmetric pairing to $b_{2,3}$\ on the
$\CSpin(2,3)$-associated spin bundles is denoted
$\mb{b}_{2,3}\in\Ga(\La^2 S^*[-1]).$

\begin{defn}
  A twistor spinor $\chi\in\Ga(S[\frac{1}{2}])$ is called \emph{generic}\
  if it satisfies
     $ \mb{b}_{2,3}(\chi,\dirac\chi)\not=0.$
  \end{defn}

\begin{rema}
  \begin{enumerate}\itemsep=0pt
  \item 
It easily follows from the twistor spinor equation, that
$\mb{b}_{2,3}(\chi,\dirac\chi)\in\cinf(M)$\ is constant, given
that $M$\ is connected, which we will assume. It is also easy to see from skew-symmetry
of $\mb{b}_{2,3}$\ and the transformation behavior
of spin tractors that this number doesn't depend on the choice of
$g\in\mc{C}$\ used for its computation.
\item 
 Recall that a spinor is called \emph{pure} if its kernel under
 Clifford multiplication is a maximally isotropic subspace - in our case this means
 that it has dimension $2$. Since $\Spin(2,3)$\ acts transitively
 on $\De^{2,3}\backslash\{0\}$, all non-zero spinors are pure.
 It is a well known fact due to E. Cartan \cite{cartan-spinors}\ that
 the canonical pairing between two pure spinors is non-trivial if and only if
 their kernels under Clifford multiplication are transversal. This
 is easily seen directly in this case, which is done in the proof below.
  \end{enumerate}
\end{rema}

\begin{prop}\label{propframe23}
  Let $\chi\in\Ga(S[\frac{1}{2}])$\ be a generic twistor spinor on
  a conformal spin structure $(M,\mc{C})$\ of signature $(2,3)$.
  Denote $\tau=\frac{\sqrt{2}}{5}\dirac\chi$\ for some $g\in\mc{C}$,
  and assume that $\mb{b}_{2,3}(\chi,\tau)=1$.
  \begin{enumerate}\itemsep=0pt
  \item 
  For every $x\in M$\ there is a local frame
  $e_1,e_2,r,f_1,f_2\in\X(U)$, $U$\ a neighborhood of $x$,
  such that on $U$,
  \begin{align}\label{frame1}
    &\ker\ga\chi=\mr{span}(e_1,e_2), \ker\ga\tau=\mr{span}(f_1,f_2),\\\notag
    &(\ker\ga\chi)^{\bot}\cap(\ker\ga\tau)^{\bot}=\rr r,
  \end{align}
  \begin{align}\label{frame2}
    &g(r,r)=-1,\ g(e_i,r)=0,\ g(f_i,r)=0,\ g(e_i,f_j)=\de_{ij},
  \end{align}
  and
\begin{align}\label{frame3}
  &\frac{1}{2}e_1\cdot e_2\cdot\tau=\chi,\ \frac{1}{2}f_1\cdot f_2\cdot\chi=\tau.
\end{align}
  This implies that
\begin{align}\label{frame4}
  &r\cdot\chi=\chi,\ r\cdot\tau=\tau,\ 
  f_1\cdot\chi=-e_2\cdot\tau,\ f_2\cdot\chi=e_1\cdot\tau.
\end{align}
  \item
For $\xi\in\X(M)$\ arbitrary and $\eta\in\ker\ga\chi\subset\X(M)$,
one has
\begin{align}
  &\mb{b}_{2,3}(\xi\cdot\chi,\tau)=-g(\xi,r),\label{formulab23r} \\
  &\mb{b}_{2,3}(\xi\cdot\chi,\eta\cdot\tau)=-2g(\xi,\eta). \label{formulab23}
\end{align}
  \end{enumerate}
  \begin{proof}
    \begin{enumerate}\itemsep=0pt
    \item
      Let $y\in M$\ be arbitrary. We claim that $\ker\ga\chi(y)$\
      and $\ker\ga\tau(y)$\ have transversal ($2$-dimensional) kernels under 
      Clifford multiplication: assume that there exists some $0\not=\xi\in\ker\ga\chi(y)\cap\ker\ga\tau(y)$. Then we can take some isotropic $\eta$\ with $g(\xi,\eta)=1$,
      and then $0=\mb{b}_{2,3}(\xi\cdot\chi,\eta\cdot\tau)=\mb{b}_{2,3}(\eta\cdot\xi\cdot\chi,\tau)=-2\mb{b}_{2,3}(\chi,\tau)$,
      which contradicts the assumption.
 Therefore we can take local sections $e_1,e_2$\ and $f_1,f_2$\ on some
 neighborhood $U$\ of $x\in M$\ which are
 linearly independent at all points in $U$
 and span the kernels of $\chi$\ resp. $\tau$\ under Clifford multiplication.
 It is clear how to choose $r\in\X(U)$\ then and that we can achieve
 \eqref{frame1},\eqref{frame2}. The additional freedom $\al e_1,\frac{1}{\al}f_1$, $\al\in\rr_*$\
 allows us to obtain \eqref{frame3}, and \eqref{frame1}-\eqref{frame3}\ automatically
 imply \eqref{frame4}.
    \item 
      To see \eqref{formulab23r}, note that
      \begin{align*}
        &\mb{b}_{2,3}(\xi\cdot\chi,\tau)=\mb{b}_{2,3}(\xi\cdot\chi,r\cdot\tau)=\\
        &=\mb{b}_{2,3}(r\cdot\xi\cdot\chi,\tau)=-2-\mb{b}_{2,3}(\xi\cdot\chi,\tau).
      \end{align*}
      Now to \eqref{formulab23}: Let $\eta\in\ker\ga\chi$.
      If $\xi=e_i,i=1,2$\ the equation holds, since $\xi\cdot\chi=0$\
      and $g(\xi,\eta)=0$. If $\xi=r$, then $\xi\cdot\chi=\chi$,
      and by \eqref{compatb23} and \eqref{frame2} the equation holds.
      If $\xi=f_i,i=1,2$, then 
      \begin{align*}
        &\mb{b}_{2,3}(\xi\cdot \chi,\eta\cdot \tau)=\mb{b}_{2,3}((\xi+\eta)\cdot \chi,(\xi+\eta)\cdot\tau)=\\&=\mb{b}_{2,3}((\xi+\eta)\cdot(\xi+\eta)\cdot\chi,\tau)=-2g(\xi,\eta).
      \end{align*}
    \end{enumerate}
  \end{proof}
\end{prop}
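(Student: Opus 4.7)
The plan is to exploit the fact that on a $5$-dimensional signature $(2,3)$ space every non-zero spinor is pure (since $\Spin(2,3)$ acts transitively on $\De^{2,3}\wz$), so the spinors $\chi$ and $\tau$ each have a $2$-dimensional maximally isotropic Clifford kernel. I first show that genericity, i.e.\ $\mb{b}_{2,3}(\chi,\tau)=1$, is precisely the condition that these two kernels intersect only in $0$: if $0\neq\xi\in\ker\ga\chi\cap\ker\ga\tau$, then for any $\eta$ with $g(\xi,\eta)=1$ the quantity $\mb{b}_{2,3}(\xi\cdot\chi,\eta\cdot\tau)$ is on the one hand $0$ and on the other, after moving $\xi$ across via \eqref{compatb23}, is $-2g(\xi,\eta)\mb{b}_{2,3}(\chi,\tau)$, contradicting the hypothesis.

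Once transversality is established, $\ker\ga\chi\oplus\ker\ga\tau$ is $4$-dimensional and its $g$-orthogonal complement is $1$-dimensional, with $g$ necessarily negative-definite on it (as the $4$-dimensional summand carries a neutral form and the ambient signature is $(2,3)$). I then choose a local generator $r$ with $g(r,r)=-1$ and local bases $e_1,e_2$ and $f_1,f_2$ of the two transversal maximally isotropic subspaces such that the induced non-degenerate pairing between them gives $g(e_i,f_j)=\de_{ij}$; this furnishes \eqref{frame1}--\eqref{frame2}. For \eqref{frame3} I observe that $e_1\cdot e_2\cdot\tau$ is annihilated by both $e_1$ and $e_2$ under Clifford multiplication (using only $e_i\cdot e_i=0$ and $e_1\cdot e_2+e_2\cdot e_1=0$), so by purity it is a scalar multiple of $\chi$; this scalar is non-zero because $e_2\cdot\tau$ is itself a pure spinor whose isotropic kernel $\mr{span}(e_2,f_1)$ does not contain $e_1$. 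The residual scaling freedom $(e_1,f_1)\mapsto(\al e_1,\al^{-1}f_1)$ lets me normalize this constant to $2$, yielding $\frac{1}{2}e_1\cdot e_2\cdot\tau=\chi$; the companion identity $\frac{1}{2}f_1\cdot f_2\cdot\chi=\tau$ then follows by directly expanding $f_1\cdot f_2\cdot e_1\cdot e_2\cdot\tau$ with the Clifford relations. For \eqref{frame4}, $r\cdot\chi$ is again annihilated by $e_1,e_2$ (as $r$ anti-commutes with each $e_i$), hence proportional to $\chi$ with a constant whose square is $-g(r,r)=1$; compatibility with \eqref{frame3} forces the signs in $r\cdot\chi=\chi$ and $r\cdot\tau=\tau$ to agree, and the formulas for $f_i\cdot\chi$ come out of substituting $\chi=\frac{1}{2}e_1\cdot e_2\cdot\tau$ and simplifying.

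For part (2), both formulas follow from the standard trick of using \eqref{compatb23} to shift Clifford factors between the two arguments of $\mb{b}_{2,3}$. For \eqref{formulab23r} I rewrite $\tau=r\cdot\tau$, push the $r$ across, and then use $r\cdot\xi=-\xi\cdot r-2g(r,\xi)$ together with $r\cdot\chi=\chi$ and $\mb{b}_{2,3}(\chi,\tau)=1$ to obtain the linear relation $2\mb{b}_{2,3}(\xi\cdot\chi,\tau)=-2g(\xi,r)$. For \eqref{formulab23}, bilinearity reduces the claim to checking $\xi$ running over the frame with $\eta\in\{e_1,e_2\}$: the cases $\xi=e_i$ are trivial (both sides vanish), $\xi=r$ uses \eqref{compatb23} and $r\cdot\chi=\chi$, and $\xi=f_i$ is handled by the slick identity $\mb{b}_{2,3}((\xi+\eta)\cdot\chi,(\xi+\eta)\cdot\tau)=\mb{b}_{2,3}((\xi+\eta)\cdot(\xi+\eta)\cdot\chi,\tau)=-2g(\xi,\eta)$, combining one application of \eqref{compatb23} with $v\cdot v=-g(v,v)$. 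The main obstacle, and the place where most of the bookkeeping lives, is the scalar in \eqref{frame3}: one has to verify that a single choice of $\al$ simultaneously normalizes the two symmetric identities.
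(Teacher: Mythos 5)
Your proposal follows essentially the same route as the paper's proof: the same transversality argument via $\mb{b}_{2,3}(\xi\cdot\chi,\eta\cdot\tau)$, the same construction of the frame with the scaling freedom $(e_1,f_1)\mapsto(\al e_1,\al^{-1}f_1)$ used to normalize \eqref{frame3}, and in part (2) the identical Clifford-shifting tricks (including the $(\xi+\eta)$ polarization identity); you merely write out more of the routine verifications, and your derivation of \eqref{formulab23r} correctly supplies the factor $g(\xi,r)$ that the paper's displayed computation abbreviates to $-2$. One caveat on the step you yourself flag as the main obstacle: with the Clifford convention forced by the tractor relation $s\cdot s\cdot X=-\mb{h}(s,s)X$ (namely $u\cdot v+v\cdot u=-2g(u,v)$ on $S$), a direct expansion gives $f_1\cdot f_2\cdot e_1\cdot e_2\cdot\tau=-4\tau$, so if $\tfrac12 e_1\cdot e_2\cdot\tau=\la\chi$ and $\tfrac12 f_1\cdot f_2\cdot\chi=\mu\tau$ then $\la\mu=-1$, which is invariant under the $\GL(2)$ freedom on the frame; hence the two identities of \eqref{frame3} can only be arranged up to one sign, and your claim that the companion identity ``follows by directly expanding'' actually produces $-\tau$. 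This is a sign slip already present in the paper's own statement (it is harmless for the sequel, which only uses the first identity of \eqref{frame3} and \eqref{frame4}), so it is not a defect of your approach, but it should not be asserted as verified.
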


The genericity of a twistor spinor $\chi\in\Ga(S[\frac{1}{2}])$\
carries over to the induced distribution $\mc{D}_{\chi}:=\ker\ga\chi.$
To make this precise, we first define,
for two subbundles $\D_1\subset TM$ and $\D_2\subset TM$,
\begin{gather}\label{distspan}
  [\D_1,\D_2]_x:=\mr{span}(\{[\xi,\eta]_x :\ \xi\in\Ga(\D_1),\eta\in\Ga(\D_2)\}).
\end{gather}

\begin{defn}\label{defgen2}
  A smooth rank $2$\ subbundle $\mc{D}$\ of the tangent bundle $TM$\ of a $5$-manifold $M$\ 
  is called a \emph{generic 2 distribution}\ if $\mc{D}^1:=[\mc{D},\mc{D}]\subset TM$\ is of
  constant rank $3$\ and $[\mc{D},\mc{D}^1]$\ is already $TM$.
\end{defn}

Employing Proposition \ref{propframe23} we can show
\begin{prop}\label{propgen23}
  Let $(M,\mc{C},\chi)$\ be a conformal spin structure of signature $(2,3)$\
  with a generic twistor spinor $\chi$. Then $\mc{D}_{\chi}=\ker\ga\chi$\
  is a generic rank $2$\ distribution on $M$.
  \begin{proof}
    Abbreviate $\tau=\frac{\sqrt{2}}{5}\dirac\chi$. Then
$\crd\chi=-\frac{1}{\sqrt{2}}\tau$.
Choose a local frame $e_1,e_2,r,f_1,f_2$\ in some neighborhood
$U\subset M$\ with the properties \eqref{frame1}-\eqref{frame3}.
Then $e_2\cdot\chi=0$, and
therefore also $\mb{b}_{2,3}(e_2\cdot\chi,\tau)=0$. A differentiation
gives $$\mb{b}_{2,3}(D_{e_1}e_2)\cdot\chi,\tau)-\frac{1}{\sqrt{2}}\mb{b}_{2,3}(e_2\cdot e_1\cdot \tau,\tau)=0$$
and via \eqref{frame3}\ we see 
\begin{align}\label{nae1e2formula}
  \mb{b}_{2,3}(D_{e_1}e_2\cdot\chi,\tau)=-\sqrt{2},\ \mb{b}_{2,3}(D_{e_2}e_1\cdot\chi,\tau)=\sqrt{2}.
\end{align}
It is also easily seen that
\begin{align}\label{nae1e2formulaz}
  \mb{b}_{2,3}(D_{e_1}e_1\cdot\chi,\tau)=\mb{b}_{2,3}(D_{e_2}e_2\cdot\chi,\tau)=0,
\end{align}
which will be needed below.
Alternating $e_1,e_2$\ in \eqref{nae1e2formula} shows $\mb{b}_{2,3}([e_1,e_2]\cdot\chi,\tau)=-2\sqrt{2},$
but since
$\mb{b}_{2,3}([e_1,e_2]\cdot\chi,\tau)=-g([e_1,e_2],r),$ we
have $g([e_1,e_2],r)=2{\sqrt{2}}.$
A similar calculation gives that for all $\eta\in\Ga(\ker\ga\chi)$\ one
has $g([e_1,e_2],\eta)=0$, and thus $[e_1,e_2]\in\Ga(\ker\ga\chi)$,
and then $[e_1,e_2]/\mc{D}=-{\sqrt{2}}r/\mc{D}.$

Starting with the equation $\mb{b}_{2,3}(r\cdot\chi,\eta\cdot\tau)=0$\ for
$\eta\in\Ga(\ker\ga\chi)$\ and differentiating in direction
$\xi\in\Ga(\ker\ga\chi)$
one obtains 
\begin{align*}
  \mb{b}_{2,3}(D_{\xi}r\cdot\chi,\eta\cdot\tau)-\frac{1}{\sqrt{2}}\mb{b}_{2,3}(r\cdot\xi\cdot\tau,\eta\cdot\tau)+\mb{b}_{2,3}(\chi,D_{\xi}\eta\cdot\chi)=0.
\end{align*}
Reversing the roles of $r$\ and $\xi$\ gives
\begin{align*}
  \mb{b}_{2,3}(D_r\xi\cdot\chi,\eta\cdot\tau)-\frac{1}{\sqrt{2}}\mb{b}_{2,3}(\xi\cdot r\cdot\tau,\eta\cdot\tau)=0.
\end{align*}
One has, since $\mb{b}_{2,3}$\ is skew and satisfies \eqref{compatb23},
$\mb{b}_{2,3}(r\cdot\xi\cdot\tau,\eta\cdot\tau)=\mb{b}_{2,3}(\xi\cdot r\cdot\tau,\eta\cdot\tau)=0$.
Therefore a subtraction yields
\begin{align*}
  \mb{b}_{2,3}([\xi,r]\cdot\chi,\eta\cdot\tau)=-\mb{b}_{2,3}(D_{\xi}\eta\cdot\chi,\tau).
\end{align*}
Employing \eqref{formulab23}, \eqref{nae1e2formula} and \eqref{nae1e2formulaz}\ then gives
\begin{align*}
  g([e_1,r],e_1)=0,\ g([e_1,r],e_2)=\frac{1}{\sqrt{2}},
\end{align*}
and thus 
$  [e_1,r]/\mc{D}^1=\frac{1}{\sqrt{2}} f_2/\mc{D}^1,$
and similarly
$
  [e_2,r]/\mc{D}^1=-\frac{1}{\sqrt{2}} f_1/\mc{D}^1.$
  \end{proof}
\end{prop}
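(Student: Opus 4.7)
The plan is to reduce everything to bilinear-pairing computations using the adapted local frame produced by Proposition~\ref{propframe23}. Fix $g \in \mc{C}$, write $\tau = \frac{\sqrt{2}}{5}\dirac\chi$, and normalize so that $\mb{b}_{2,3}(\chi,\tau)=1$; the genericity hypothesis says this function is a nonzero constant, and in particular $\chi$ never vanishes. Since $\Spin(2,3)$ acts transitively on $\De^{2,3}\setminus\{0\}$, every nonvanishing spinor is pure, so $\ker\ga\chi = \mc{D}_\chi$ is a smooth rank~$2$ subbundle. Around any point I work with the frame $e_1,e_2,r,f_1,f_2$ from Proposition~\ref{propframe23}, with $\mc{D}_\chi=\mathrm{span}(e_1,e_2)$ and $\ker\ga\tau=\mathrm{span}(f_1,f_2)$.

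The core mechanism is the following: if $\xi \in \Ga(\mc{D}_\chi)$, then $\xi\cdot\chi \equiv 0$, so also $\mb{b}_{2,3}(\xi\cdot\chi,\tau) \equiv 0$; differentiating this identity along another vector field $\eta$, applying the twistor equation $D_\eta\chi = -\frac{1}{\sqrt 2}\eta\cdot\tau$ (equivalently the prolongation of $\tau = \frac{\sqrt 2}{5}\dirac\chi$ from Proposition~\ref{prop-spin}), and using the Clifford compatibility \eqref{compatb23} of $\mb{b}_{2,3}$, one reduces $\mb{b}_{2,3}(D_\eta \xi \cdot \chi,\tau)$ to an algebraic expression in the frame. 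To prove $[\mc{D}_\chi,\mc{D}_\chi]$ has constant rank $3$, I apply this to the pairs $(e_1,e_2)$ and $(e_2,e_1)$ to obtain $\mb{b}_{2,3}(D_{e_i}e_j\cdot\chi,\tau)$, antisymmetrize to get $\mb{b}_{2,3}([e_1,e_2]\cdot\chi,\tau)$, and finally invoke formula \eqref{formulab23r} which turns this into $-g([e_1,e_2],r)$. A parallel computation with $\eta\in\Ga(\mc{D}_\chi)$ and formula \eqref{formulab23} shows $g([e_1,e_2],\eta)=0$, so $[e_1,e_2]$ lies in $\mc{D}_\chi \oplus \mathbb{R} r$ and is nontrivial modulo $\mc{D}_\chi$, giving $\mc{D}_\chi^1 = \mc{D}_\chi \oplus \mathbb{R} r$ of rank $3$.

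For the final growth condition $[\mc{D}_\chi,\mc{D}_\chi^1]=TM$, I need to control $[e_i,r]$ modulo $\mc{D}_\chi^1 = \mathrm{span}(e_1,e_2,r)$ and show the brackets reach into the $f$-directions. The right identity to differentiate is $\mb{b}_{2,3}(r\cdot\chi,\eta\cdot\tau)\equiv 0$ for $\eta\in\Ga(\mc{D}_\chi)$, which follows from \eqref{formulab23} and $g(r,\mc{D}_\chi)=0$. Differentiating once in direction $\xi\in\Ga(\mc{D}_\chi)$ and once in direction $r$, substituting the twistor equation, and subtracting eliminates the symmetric Clifford terms by \eqref{compatb23} and leaves a formula of the shape $\mb{b}_{2,3}([\xi,r]\cdot\chi,\eta\cdot\tau) = -\mb{b}_{2,3}(D_\xi\eta\cdot\chi,\tau)$. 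Using again \eqref{formulab23} on the left and the earlier computation of $\mb{b}_{2,3}(D_{e_i}e_j\cdot\chi,\tau)$ on the right, I read off $g([e_i,r],e_j)$ and conclude that $[e_1,r]$, $[e_2,r]$ have nonzero $f_2$-, respectively $f_1$-components modulo $\mc{D}_\chi^1$, hence $\mc{D}_\chi^1 + [\mc{D}_\chi,\mc{D}_\chi^1] = TM$.

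The main obstacle I foresee is bookkeeping: tracking the signs and factors of $\sqrt{2}$ through the chain of pairing identities, checking that every differentiated pairing truly collapses via \eqref{compatb23} plus the twistor equation, and verifying that the vanishing pieces (such as $\mb{b}_{2,3}(D_{e_i}e_i\cdot\chi,\tau)=0$) do drop out. All of this is linear algebra in the spin representation, so there is no conceptual difficulty; the computation just needs to be carried out systematically in the adapted frame of Proposition~\ref{propframe23}.
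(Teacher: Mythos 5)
Your proposal is correct and follows essentially the same route as the paper's own proof: differentiating the pairing identities $\mb{b}_{2,3}(\xi\cdot\chi,\tau)=0$ and $\mb{b}_{2,3}(r\cdot\chi,\eta\cdot\tau)=0$ in the adapted frame of Proposition~\ref{propframe23}, antisymmetrizing, and converting the results into metric data on the brackets via \eqref{formulab23r} and \eqref{formulab23}. The only addition is your (correct, and welcome) remark that genericity forces $\chi$ to be nonvanishing and hence $\ker\ga\chi$ to be a smooth rank~$2$ subbundle, a point the paper leaves implicit.
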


We now calculate the holonomy reduction
implied by the existence of a generic twistor spinor
on a conformal spin structure of signature $(2,3)$.

\begin{thm}\label{thm-holchar23} Let $(M,\mc{C})$\ be a conformal spin
structure of signature $(2,3)$\ with its real 4 dimensional,
conformally weighted spin bundle $S[\frac{1}{2}]$, which is endowed
with its skew-form $\mb{b}_{2,3}$.

Then $\Hol(\mc{C})\subset G_2\subset\Spin(3,4)$\ if and only if there exists a twistor spinor
$\chi\in \Ga(S[\frac{1}{2}])$\ with $\mb{b}_{2,3}(\chi,\dirac\chi)\not=0$.
\end{thm}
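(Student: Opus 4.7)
The plan is to translate the statement into a question about $\Spin(3,4)$-orbits on its spin representation $\De^{3,4}$, exploiting the tractor reformulation of the twistor equation. By Proposition \ref{prop-spin}, a twistor spinor $\chi \in \Ga(S[\frac{1}{2}])$ is equivalent to a parallel section $X = L_0^{\mc{S}}(\chi) = (\frac{\sqrt{2}}{5}\dirac\chi,\chi)$ of the spin tractor bundle $\mc{S}$. By Definition \ref{confhol}, $\Hol(\mc{C}) = \Hol(\na^{\mc{S}})$; since $\mc{S}$ is associated to the real $8$-dimensional spin representation $\De^{3,4}$ of $\Spin(3,4)$, the existence of such a parallel $X$ is equivalent to the existence of a nonzero $\Hol(\mc{C})$-fixed vector in $\De^{3,4}$.

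Next I would encode the genericity condition by a $\Spin(3,4)$-invariant bilinear form. In signature $(3,4)$ one has $p-q = -1 \equiv 7 \pmod 8$, so $\De^{3,4}$ carries an invariant symmetric bilinear form $\mb{h}_{3,4}$. Transferred to $\mc{S}$, this form should pair the two slots of the decomposition $\mc{S} = S[-\frac{1}{2}] \oplus S[\frac{1}{2}]$ through $\mb{b}_{2,3}$; $\Spin(3,4)$-invariance is verified by compatibility with the tractor Clifford action \eqref{traCli} and the transformation rules from Section \ref{conformalspinstr}. By construction the resulting form $\mb{h}^{\mc{S}}$ satisfies
\begin{equation*}
\mb{h}^{\mc{S}}\bigl(L_0^{\mc{S}}(\chi),L_0^{\mc{S}}(\chi)\bigr) = c\,\mb{b}_{2,3}(\chi,\dirac\chi)
\end{equation*}
for a nonzero constant $c$, so the genericity of $\chi$ is precisely the non-nullness of $X$ with respect to $\mb{h}^{\mc{S}}$.

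The last ingredient is the classical fact of Cartan \cite{cartan-spinors} (see also \cite{lounesto-clifford}) that the split real form of $G_2$ is the stabilizer in $\Spin(3,4)$ of any non-null spinor in $\De^{3,4}$, and that $\Spin(3,4)$ acts transitively on such spinors. With this, both implications follow: if $\chi$ is a generic twistor spinor, then $X$ is parallel and non-null, so $\Hol(\mc{C})$ lies in the $G_2$-stabilizer of $X$; conversely, if $\Hol(\mc{C}) \subset G_2$, a $G_2$-invariant (hence non-null) vector in $\De^{3,4}$ gives a parallel section of $\mc{S}$ whose projection $\Pi_0(X)$ is, by Proposition \ref{prop-spin}, a twistor spinor satisfying $\mb{b}_{2,3}(\chi,\dirac\chi) \neq 0$.

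The main obstacle I anticipate is the explicit construction and normalization of the invariant form $\mb{h}^{\mc{S}}$ on the weighted spin tractor bundle, since the density weights $\pm\frac{1}{2}$ on the two slots interact with the weight of $\mb{b}_{2,3}$ in a way that must be pinned down precisely before one can compute $\mb{h}^{\mc{S}}(L_0^{\mc{S}}(\chi),L_0^{\mc{S}}(\chi))$ and compare it to the genericity invariant; additionally, one has to be careful to identify the \emph{split} real form of $G_2$ here, rather than the compact form that appears in the analogous Riemannian $\Spin(7)$ picture.
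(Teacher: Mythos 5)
Your proposal is correct and follows essentially the same route as the paper: by Proposition \ref{prop-spin} and Definition \ref{confhol} the theorem reduces to the algebraic fact that the stabilizer in $\Spin(3,4)$ of a non-null spinor in $\De^{3,4}$ is the split $G_2$, with genericity of $\chi$ translating into non-nullness of $L_0^{\mc{S}}(\chi)$ via the invariant form built from $\mb{b}_{2,3}$ (the paper's $B_{3,4}$ in \eqref{formb34}, which settles the normalization you flag as a potential obstacle). The only difference is that where you cite Cartan for the stabilizer statement, the paper proves it directly in Proposition \ref{propg2} by exhibiting a graded basis of the isotropy algebra and a transitivity/homotopy-sequence argument at the group level.
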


Here, and throughout this text, $G_2$\ will always
refer to the connected Lie group with fundamental group $\mathbb{Z}_2$\ that
has Lie algebra $\g_2$, the split real form of the exceptional
complex Lie group $\g_2^{\cc}$. 

By Proposition \ref{prop-spin}\ and the definition of conformal
holonomy \ref{defhol}, to see Theorem \ref{thm-holchar23}, we only need that the stabilizer in
$\Spin(3,4)$\ of an element
$
X=
\begin{pmatrix}
  \tau \\
  \chi
\end{pmatrix}
\in\De^{3,4}$
with the property that $b_{2,3}(\chi,\tau)\not=0$\ is indeed $G_2$.
This is the content of the next subsection.

\subsubsection{Algebraic background on $G_2\embed\Spin(3,4)$}\label{algembed}
Recall that we have fixed some signature $(2,3)$-bilinear form $g_{2,3}$ on $\rr^5$,
and we write $\rr^{2,3}=(\rr^5,g_{2,3})$.
Let us extend this form orthogonally to a form $h_{3,4}$\ on $\rr^7$\
by introducing two new directions $e_+$\ and $e_-$\
and defining $h_{3,4}=2(de_+)(de_-)+g_{2,3}$.
The vectors $e_+,e_-\in\rr^{3,4}=(\rr^7,h_{3,4})$\ are isotropic,
and $\rr^{2,3}\embed\rr^{3,4}$\ is the orthogonal complement to the subspace
of $\rr^{3,4}$\ spanned by $e_+$\ and $e_-$.

We have
$
  \De^{3,4}=
  \begin{pmatrix}
    \De^{2,3} \\
    \De^{2,3}
  \end{pmatrix},
$
where an element $v=\si e_-\oplus \xi\oplus\rh e_+ \in \rr e_-\oplus\rr^{2,3}\oplus\rr e_+=\rr^{3,4}$
acts on 
 $
   X=
\begin{pmatrix} 
  \tau   \\ 
  \chi
\end{pmatrix}
$
by
$  v\cdot X =  
  \begin{pmatrix} 
    -\xi\cdot\tau-\sqrt{2}\rh \chi \\
    \xi\cdot\chi+\sqrt{2}\si \tau
  \end{pmatrix}.
$
$\De^{3,4}$\ is endowed with the canonical signature $(4,4)$ symmetric bilinear form
$B_{3,4}$, defined by
\begin{align}\label{formb34}
  B_{3,4}(
  \begin{pmatrix}
    \tau \\
    \chi
  \end{pmatrix}
,
  \begin{pmatrix}
    \tau' \\
    \chi'
  \end{pmatrix}
)=b_{2,3}(\chi,\tau')+b_{2,3}(\chi',\tau),
\end{align}
and with respect to $B_{3,4}$\ an element  $
   X=
\begin{pmatrix} 
  \tau   \\ 
  \chi
\end{pmatrix}
$\ is evidently non-null if and only if $b_{2,3}(\chi,\tau)\not=0$.

\begin{prop}\label{propg2}
Let $X=
\begin{pmatrix}
  \tau \\
  \chi
\end{pmatrix}
\in\De^{3,4}$\
be such that $b_{2,3}(\chi,\tau)=1$.
\begin{enumerate}\itemsep=0pt
\item 
There exists a basis $e_1,e_2,r,f_1,f_2$ of $\rr^{2,3}\subset\rr^{3,4}$
with the properties  \eqref{frame1}-\eqref{frame4}.
\item
The isotropy algebra $\g_2:=\so(3,4)_X$\ is the split real form of $\g_2^{\cc}$. It carries a grading, more precisely, it is the direct sum
$\g_{-3}\oplus\g_{-2}\oplus\g_0\oplus\g_1\oplus\g_2\oplus\g_3,$ where the individual components are spanned by the following elements: $(i=1,2)$
\begin{align*}
  &\g_{-3}=\mr{span}(e_-\wedge f_i),\\
  &\g_{-2}=\mr{span}(e_-\wedge r-\frac{1}{\sqrt{2}}f_1\wedge f_2),\\
  &\g_{-1}=\mr{span}(e_-\wedge e_i+\sqrt{2}r\wedge i_{e_i}f_1\wedge f_2),\\
  &\g_0=\mr{span}(e_1\wedge f_2,e_2\wedge f_1, e_i\wedge f_i+e_+\wedge e_-),\\
  &\g_1=\mr{span}(e_+\wedge f_i-\sqrt{2}r^i_{f_i}e_1\wedge e_2),\\
  &\g_2=\mr{span}(e_+\wedge r+\frac{1}{\sqrt{2}}e_1\wedge e_2),\\
  &\g_3=\mr{span}(e_+\wedge e_i).
\end{align*}
The sum $\mathfrak{p}=\g_0\oplus\g_1\oplus\g_2\oplus\g_3$ forms a parabolic subalgebra. 
\item
$\Spin(3,4)_X=G_2$.
\item
The restriction of the standard representation of $\Spin(3,4)$\
on $\rr^{3,4}$\ to $G_2$\ is the irreducible standard representation
of $G_2$. The restriction of the spin representation of $\Spin(3,4)$\
on $\Delta^{3,4}$\ to $G_2$\ splits into one copy of the standard representation
and $\rr$.
\end{enumerate}
\begin{proof}
  (1) is done as in Proposition \ref{propframe23}, and (2) is
checked directly. \\ To see (3): 
It follows from the Proposition that the isotropy subgroup of $X$ in $\mathrm{Spin}(3,4)$ is a closed subgroup with Lie algebra $\mathfrak{g}_2$. 
One can verify that it is 
the connected subgroup with fundamental group $\mathbb{Z}_2$: The action of $\Spin(3,4)$ on the space of all spinors of a fixed norm $B_{3,4}(X,X)$
is transitive, since the orbit on any non-null spinor must be open by Proposition \ref{propg2} and dimension count. To see that $G_2:=\Spin(3,4)_X$
is connected and has fundamental group $\mathbb{Z}_2$ is then a standard argument using the exact homotopy sequence for the action.
\\
(4)\ is seen easily on the Lie algebra level, and carries over to the connected groups.
\end{proof}
\end{prop}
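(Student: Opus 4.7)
\textbf{Plan for Proposition \ref{propg2}.} For part (1), I would essentially transplant the linear-algebra portion of the proof of Proposition \ref{propframe23} into the purely algebraic setting of $\rr^{2,3}$. The hypothesis $b_{2,3}(\chi,\tau)=1$ still forces $\ker\gamma\chi\cap\ker\gamma\tau=0$: a nonzero common element $\xi$ together with any isotropic $\eta$ satisfying $g(\xi,\eta)=1$ would give $0=b_{2,3}(\eta\cdot\xi\cdot\chi,\tau)=-2b_{2,3}(\chi,\tau)$, contradicting the normalization. Then one picks any basis $e_1,e_2$ of $\ker\gamma\chi$ and $f_1,f_2$ of $\ker\gamma\tau$, adjusts the $f_i$ inside their kernel to achieve $g(e_i,f_j)=\delta_{ij}$, lets $r$ span the one-dimensional subspace $(\ker\gamma\chi\oplus\ker\gamma\tau)^{\perp}$ and rescales it to $g(r,r)=-1$. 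The remaining one-parameter freedom $e_1\mapsto\alpha e_1$, $f_1\mapsto\alpha^{-1}f_1$ is used to normalize \eqref{frame3}; the identities \eqref{frame4} then follow purely from \eqref{frame3} and the Clifford relations on $\Delta^{2,3}$.

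For part (2), I would write a general $A\in\so(3,4)\cong\Lambda^2\rr^{3,4}$ in the adapted basis $e_-,e_1,e_2,r,f_1,f_2,e_+$ of $\rr^{3,4}$, act on $X$ via the iterated Clifford action induced by $v\cdot X$ as in \eqref{traCli} (that is, $v\wedge w$ acts as $\tfrac12(v\cdot w-w\cdot v)\cdot$), and impose $A\cdot X=0$. Using the ``reduction'' relations $e_i\cdot\chi=0$, $f_j\cdot\tau=0$, $r\cdot\chi=\chi$, $r\cdot\tau=\tau$, $f_1\cdot\chi=-e_2\cdot\tau$, $f_2\cdot\chi=e_1\cdot\tau$ of \eqref{frame4} and their analogues, the condition $A\cdot X=0$ collapses to a linear system on the $21$ coefficients whose $14$-dimensional solution space is spanned by the listed elements. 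The grading is identified as the eigenspace decomposition of $\ad(E)$ for the grading element $E=e_+\wedge e_-$, and the inclusions $[\g_i,\g_j]\subset\g_{i+j}$ can be verified directly from the bracket on $\Lambda^2\rr^{3,4}$. Matching the resulting root pattern with that of the split real form of $\g_2^{\cc}$ yields the claimed isomorphism.

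For part (3), the plan is the transitivity-plus-homotopy-sequence argument sketched in the excerpt. The isotropy $\Spin(3,4)_X$ is a closed Lie subgroup with Lie algebra $\g_2$, so its orbit on $X$ has dimension $\dim\Spin(3,4)-\dim\g_2=21-14=7$. The orbit lies in the quadric $Q=\{Y\in\Delta^{3,4}\,:\,B_{3,4}(Y,Y)=B_{3,4}(X,X)\}$, which, by the signature $(4,4)$ of $B_{3,4}$ with $B_{3,4}(X,X)=2>0$, is diffeomorphic to $S^3\times\rr^4$ and in particular connected of dimension $7$. Hence the orbit is open and therefore equal to $Q$, and we have a fibration $\Spin(3,4)_X\hookrightarrow\Spin(3,4)\to Q$. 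Feeding the known $\pi_0,\pi_1,\pi_2$ of $\Spin(3,4)$ and of $Q\simeq S^3\times\rr^4$ into the long exact homotopy sequence identifies $\Spin(3,4)_X$ as connected with fundamental group $\mathbb{Z}_2$, i.e.\ the group $G_2$ as defined in the paper. Part (4) is then a short representation-theoretic check: $\g_2$ admits a unique $7$-dimensional irreducible representation, so once one observes that $\rr^{3,4}$ stays irreducible under $\g_2$ it must be the standard representation; and in $\Delta^{3,4}$ the fixed vector $X$ spans a trivial line, while its $B_{3,4}$-orthogonal complement is a $G_2$-invariant $7$-dimensional subspace and hence again the standard representation.

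The main obstacle is the bookkeeping in step (2): one must verify not only that the listed elements annihilate $X$, but also that they span the full kernel of the linear map $A\mapsto A\cdot X$ on $\Lambda^2\rr^{3,4}$, and finally that the bracket relations match the grading and the root structure of split $\g_2^{\cc}$. Everything else is either formal (part (1)), standard Lie-theoretic topology (part (3)), or a dimension-and-invariance argument (part (4)).
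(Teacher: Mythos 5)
Your proposal follows essentially the same route as the paper's own (very terse) proof: part (1) by transplanting the frame construction of Proposition \ref{propframe23}, part (2) by direct computation of the annihilator of $X$ in $\Lambda^2\rr^{3,4}$, part (3) by the transitivity-plus-exact-homotopy-sequence argument on the level set of $B_{3,4}$, and part (4) by a dimension-and-irreducibility check on the Lie algebra level. The extra detail you supply (the identification of the quadric with $S^3\times\rr^4$ and the explicit openness/dimension count) is a correct fleshing-out of what the paper leaves to the reader.
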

This is an alternative and more
detailed description of $G_2\subset\Spin(3,4)$\ of \cite{kath}.
\nocite{bryant-exceptional}

\subsection{Conformal spin structures of signature $(3,3)$}\label{sec33}
This section runs closely parallel to the case of signature $(2,3)$\ in section
\ref{sec23}, and is therefore done succintly. 

Let $g_{3,3}$\ be some signature $(3,3)$-form on $\rr^6$, which
is then written $\rr^{3,3}$.
Let $\De^{3,3}$\ be the real, $8$-dimensional spin representation
of $\Spin(3,3)=\Spin(g_{3,3})$. Then $\De^{3,3}=\De^{3,3}_+\oplus\De^{3,3}_-$
and
the Clifford multiplication is written
\begin{align*}
  \rr^{3,3}\t\De^{3,3}_{\pm}\goesto\De^{3,3}_{\mp},\  \xi\t\chi\mapsto \xi\cdot\chi.
\end{align*}
The canonical pairing $b_{3,3}$\ (cf. e.g. \cite{korman-sparling-fierz}) of $\De^{3,3}_+$\ and $\De^{3,3}_-$
satisfies
\begin{align*}
  b_{3,3}(\chi,\xi\cdot\eta)+b_{3,3}(\eta,\xi\cdot\chi)=0
\end{align*}
for all $\xi\in\rr^{3,3}, \chi,\eta\in\De_{\pm}^{3,3}$.
It follows in particular that $b_{3,3}$\ is invariant under $\Spin(3,3)$,
and this realizes the isomorphism $\Spin(3,3)\cong\SL(4,\rr)$.

The corresponding skew-symmetric pairing to $\mb{b}_{3,3}$\ on the
$\CSpin(3,3)$-associated spin bundles is denoted
$\mb{b}_{3,3}\in \Ga(S_+^*\t S_-^*[-1]).$

\begin{defn}
  A positive twistor spinor $\chi\in\Ga(S_+[\frac{1}{2})$ is called \emph{generic}\
  if it satisfies
  $
    \mb{b}_{3,3}(\chi,\dirac\chi)\not=0.
  $
\end{defn}

Similarly to Proposition \ref{propframe23}\ one shows
\begin{prop}\label{propframe33}
  Let $\chi\in\Ga(S_+[\frac{1}{2}])$\ be a positive generic twistor spinor on
  a conformal spin structure $(M,\mc{C})$\ of signature $(3,3)$.
  Denote $\tau=\frac{\sqrt{2}}{6}\dirac\chi$\ for some $g\in\mc{C}$,
  and assume that $\mb{b}_{3,3}(\chi,\tau)=1$.
  \begin{enumerate}\itemsep=0pt
  \item 
  For every $x\in M$\ there is a local frame
  $e_1,e_2,e_3,f_1,f_2,f_3\in\X(U)$, $U$\ a neighborhood of $x$,
  such that on $U$, 
 \begin{align}\label{framep1}
    \ker\ga\chi=\span(e_1,e_2,e_3),\ \ker\ga\tau=\span(f_1,f_2,f_3),
  \end{align}
  \begin{align}\label{framep2}
    &g(e_i,f_j)=\de_{ij},
  \end{align}
  and
\begin{align}\label{framep3}
  e_1\cdot e_2\cdot e_3\cdot \tau=\chi,\ f_1\cdot f_2\cdot f_3\cdot\chi=\tau.
\end{align}
  \item
For $\xi\in\X(M)$\ arbitrary and $\eta\in\ker\ga\chi\subset\X(M)$,
one has
\begin{align*}
  \mb{b}_{3,3}(\xi\cdot\chi,\eta\cdot\tau)=-2g(\xi,\eta).
\end{align*}
\end{enumerate}
\end{prop}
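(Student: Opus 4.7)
The plan is to follow the template of Proposition \ref{propframe23} with the case-specific input that in signature $(3,3)$ the relevant Clifford pairing is the mixed form $b_{3,3}$ on $\De^{3,3}_+\times\De^{3,3}_-$ satisfying $b_{3,3}(\chi,\xi\cdot\eta)+b_{3,3}(\eta,\xi\cdot\chi)=0$ for $\chi,\eta$ of the same chirality. The underlying algebraic fact I will use is that under $\Spin(3,3)\cong\SL(4,\rr)$ one has $\De^{3,3}_+\cong\rr^4$, $\De^{3,3}_-\cong(\rr^4)^*$ and $\rr^{3,3}\cong\La^2\rr^4$; consequently \emph{every} nonzero half-spinor is pure and its Clifford kernel in $\rr^{3,3}$ is a $3$-dimensional totally isotropic subspace. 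In particular, at every $y\in M$ both $\ker\ga\chi(y)$ and $\ker\ga\tau(y)$ are automatically maximally isotropic of dimension $3$.

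I would first establish pointwise transversality of these two kernels. Assume $0\neq\xi\in\ker\ga\chi(y)\cap\ker\ga\tau(y)$ and pick $\eta\in\rr^{3,3}$ with $g(\xi,\eta)=1$. On the one hand, applying the compatibility relation with $\chi,\eta\cdot\tau\in S_+$ yields $\mb{b}_{3,3}(\chi,\xi\cdot\eta\cdot\tau)=-\mb{b}_{3,3}(\eta\cdot\tau,\xi\cdot\chi)=0$. On the other hand, the Clifford relation $\xi\cdot\eta+\eta\cdot\xi=-2g(\xi,\eta)$ together with $\xi\cdot\tau=0$ gives $\xi\cdot\eta\cdot\tau=-2\tau$, whence $\mb{b}_{3,3}(\chi,\xi\cdot\eta\cdot\tau)=-2\mb{b}_{3,3}(\chi,\tau)=-2$, a contradiction. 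Since the two kernels are transversal Lagrangians of $\rr^{3,3}$, their spans together cover $TM$ locally and $g$ pairs them perfectly. I would then pick smooth local sections $e_1,e_2,e_3$ of $\ker\ga\chi$ and $f_1,f_2,f_3$ of $\ker\ga\tau$ on a neighborhood $U$ of $x$ and absorb a $\GL(3,\rr)$ change of basis to achieve \eqref{framep1} and \eqref{framep2}.

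For \eqref{framep3}, I observe that $e_1\cdot e_2\cdot e_3\cdot\tau$ lies in $S_+$ and is Clifford-annihilated by each $e_i$ (since $e_i$ is isotropic and anti-commutes with the other $e_j$ up to the $-2g(e_i,e_j)=0$ correction). Hence it lies in the line $\ker\ga(\cdot)\cap S_+$ spanned by $\chi$, so equals $\lambda\chi$ for a smooth function $\lambda$. The residual gauge freedom $(e_i,f_j)\mapsto (A_i{}^ke_k,(A^{-\top})_j{}^lf_l)$ preserving \eqref{framep2} permits rescaling $\lambda$ to $1$; the analogous identity for $f_1\cdot f_2\cdot f_3\cdot\chi$ then follows by pairing with $\mb{b}_{3,3}(\cdot,\chi)$ and using the normalization $\mb{b}_{3,3}(\chi,\tau)=1$. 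Finally, part (2) is a three-case check exactly parallel to \eqref{formulab23}: for $\xi\in\ker\ga\chi$ both sides vanish, while for $\xi=f_i$ one writes $\mb{b}_{3,3}(\xi\cdot\chi,\eta\cdot\tau)=\mb{b}_{3,3}((\xi+\eta)\cdot\chi,(\xi+\eta)\cdot\tau)$ and pushes one Clifford factor across via the compatibility rule, leaving the scalar $-2g(\xi,\eta)$.

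The main technical obstacle I expect is bookkeeping: signs (from $b_{3,3}$ being symmetric under the chirality swap but picking up a sign in the compatibility rule), the Clifford normalization $\xi\cdot\xi=-g(\xi,\xi)$, and the precise coefficient chase in the normalization step that converts $\lambda$ to $1$ while keeping \eqref{framep2} intact. Once those are handled consistently with the signs fixed in Section \ref{sec23}, the rest is an almost verbatim transcription of the $(2,3)$ argument.
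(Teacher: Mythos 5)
Your proposal is correct and follows exactly the route the paper intends: the paper gives no separate proof of Proposition \ref{propframe33}, stating only that it is shown ``similarly to Proposition \ref{propframe23}'', and your argument is a faithful transcription of that proof --- transversality of the two pure-spinor kernels via the pairing with $\mb{b}_{3,3}(\chi,\tau)=1$, normalization of the dual frames using the residual $\GL(3,\rr)$ freedom, and the case-by-case check of part (2) via $\mb{b}_{3,3}((\xi+\eta)\cdot\chi,(\xi+\eta)\cdot\tau)$. The only caveat, which you already flag, is the sign/normalization bookkeeping (in particular that the two volume-element identities in \eqref{framep3} can be achieved \emph{simultaneously}, since the residual gauge only rescales the two coefficients reciprocally, so one must actually verify their product is $1$ by pairing with $\mb{b}_{3,3}$); this is at the same level of detail as the paper's own treatment of the $(2,3)$ case.
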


\begin{defn}\label{defgen3}
  A generic rank $3$\ distribution on a $6$-manifold $M$\ is
  a smooth rank $3$ subbundle $\mc{D}$\ of $TM$\ with
  $[\mc{D},\mc{D}]=TM$.
\end{defn}

An analogous proof to Proposition \ref{propgen23} gives
\begin{prop}
  Let $(M,\mc{C},\chi)$\ be a conformal spin structure of signature $(3,3)$\
  with a positive generic twistor spinor $\chi$. Then $\mc{D}_{\chi}=\ker\ga\chi$\
  is a generic rank $3$\ distribution on $M$.
\end{prop}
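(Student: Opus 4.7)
The plan is to imitate the proof of Proposition \ref{propgen23}, but in the simpler setting where a single bracket already suffices. First, I fix $x\in M$, set $\tau=\tfrac{\sqrt{2}}{6}\dirac\chi$, and use Proposition \ref{propframe33} to choose a local frame $e_1,e_2,e_3,f_1,f_2,f_3$ on a neighbourhood $U$ of $x$ with $\mc{D}_\chi=\mr{span}(e_1,e_2,e_3)$, $\ker\ga\tau=\mr{span}(f_1,f_2,f_3)$, $g(e_i,f_j)=\de_{ij}$, and $e_1\cdot e_2\cdot e_3\cdot\tau=\chi$. The Clifford relation $\xi\cdot\eta+\eta\cdot\xi=-2g(\xi,\eta)$ applied to $\xi,\eta\in\ker\ga\chi$ forces $\ker\ga\chi$ to be totally isotropic, and analogously for $\ker\ga\tau$. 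Hence the $f_k$ form a complement of $\mc{D}_\chi$ in $TM$, and the class of $[e_i,e_j]$ modulo $\mc{D}_\chi$ is encoded in the three numbers $g([e_i,e_j],e_k)$, $k=1,2,3$.

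For the main computation I would differentiate the identity $e_i\cdot\chi=0$ in direction $e_j$. Using the spin-tractor parallel equation $D_V\chi=-\tfrac{1}{\sqrt{2}}\,V\cdot\tau$ (read off from $\nabla^{\mc{S}}$), this yields $(D_{e_j}e_i)\cdot\chi=\tfrac{1}{\sqrt{2}}\,e_i\cdot e_j\cdot\tau$. Anti-symmetrising over $i\ne j$ and using $e_j\cdot e_i=-e_i\cdot e_j$ (isotropy) gives
\[
[e_i,e_j]\cdot\chi\;=\;-\sqrt{2}\,e_i\cdot e_j\cdot\tau .
\]
Pair both sides with $e_k\cdot\tau$ via $\mb{b}_{3,3}$. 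By Proposition \ref{propframe33}(2) the left-hand side equals $-2\,g([e_i,e_j],e_k)$. For the right-hand side I would invoke the Clifford compatibility $b_{3,3}(\al,\xi\cdot\be)+b_{3,3}(\be,\xi\cdot\al)=0$ with $\al=\tau$, $\be=e_i\cdot e_j\cdot\tau$ (both in $\De^{3,3}_-$) and $\xi=e_k$ to move the $e_k$ across, reducing the pairing to $\mb{b}_{3,3}(\tau,\,e_k\cdot e_i\cdot e_j\cdot\tau)$. The cubic Clifford term either vanishes (when $k\in\{i,j\}$, since $e_k\cdot e_k=0$) or equals $\pm\chi$ (when $\{i,j,k\}=\{1,2,3\}$, by \eqref{framep3}), in which case the pairing reduces to the normalisation $\mb{b}_{3,3}(\chi,\tau)=1$ up to sign. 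Thus $g([e_i,e_j],e_k)$ is nonzero exactly when $\{i,j,k\}=\{1,2,3\}$, so each $f_k$ appears in some bracket modulo $\mc{D}_\chi$, and $[\mc{D}_\chi,\mc{D}_\chi]=TM$.

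The only real subtlety is the chirality bookkeeping: $\mb{b}_{3,3}$ pairs opposite chiralities, while the Clifford compatibility above applies only to two arguments of the \emph{same} chirality. Choosing that same-chirality pair to be $\tau$ and $e_i\cdot e_j\cdot\tau$ (rather than, say, two things of the form $e\cdot\tau$) is precisely what allows the cubic expression $e_k\cdot e_i\cdot e_j\cdot\tau$ to be recognised as (a multiple of) $\chi$, which is what makes the nonvanishing at the final step explicit.
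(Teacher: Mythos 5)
Your proposal is correct and is essentially the proof the paper intends: for this proposition the paper only says ``an analogous proof to Proposition \ref{propgen23}'', and your argument is exactly that analogue worked out, with the single bracket step $[e_i,e_j]\cdot\chi=-\sqrt{2}\,e_i\cdot e_j\cdot\tau$ paired against $e_k\cdot\tau$ replacing the two-step computation of the $(2,3)$ case. The only cosmetic difference is that you differentiate the spinor identity $e_i\cdot\chi=0$ directly and pair afterwards, whereas the paper differentiates the scalar identities $\mb{b}(e_i\cdot\chi,\tau)=0$; the content, including the chirality bookkeeping you flag, is the same.
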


We now discuss the conformal holonomy discussion induced
by a generic twistor spinor in signature $(3,3)$:
\begin{thm}\label{thm-holchar33} Let $(M,[g])$\ be a conformal spin
structure of signature $(3,3)$\ with its real 8 dimensional
conformally weighted spin bundle
$S[\frac{1}{2}]=S_+[\frac{1}{2}]\oplus S_-[\frac{1}{2}]$; there is a
canonical non-degenerate pairing $\mb{b}_{3,3}:S_+[\frac{1}{2}]\t
S_-[\frac{1}{2}]\goesto\rr$.

Then $\Hol([g])\subset \Spin(3,4)$\ if and only if there exists a generic twistor
spinor $\chi\in \Ga(S_+[\frac{1}{2}])$,
i.e., 
${\mb{b}_{3,3}}(\chi,\dirac\chi)\not=0$.
\begin{proof}
  For this, we compute, analogously to the case of signature $(2,3)$\ above,
the stabilizer in $\Spin(4,4)$\ of an element
$
X=
\begin{pmatrix}
  \tau \\
  \chi
\end{pmatrix}
\in\De^{4,4}$
with the property that $b_{3,3}(\chi,\tau)\not=0$.
It is easy to see (cf. formula $\eqref{formb44}$\ below) that
the invariant $\Spin(3,3)$-invariant form $b_{3,3}$ can be used 
to define the canonical $\Spin(4,4)$-invariant form $b_{4,4}$\ on $\De^{4,4}$.
Via triality, this is equivalent to the standard signature
$(4,4)$-inner product $h_{4,4}$\ on $\rr^8$, which is the standard representation
of $\Spin(4,4)$. Since the stabilizer of a non-null element in
the standard representation of $\Spin(4,4)$\ is a standard
embedding of $\Spin(3,4)$ into $\Spin(4,4)$, the stabilizer
of a non-null $X\in\De^{4,4}$\ as above is just such a standard embedding
composed with a triality automorphism of $\Spin(4,4)$.
\end{proof}
\end{thm}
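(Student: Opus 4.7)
The plan is to mimic the signature $(2,3)$ argument of Theorem \ref{thm-holchar23}: translate the analytic condition on $\chi$ into an algebraic stabilizer question via Proposition \ref{prop-spin}, and then resolve the algebraic question using triality of $\Spin(4,4)$.

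First, by the signature $(3,3)$ analogue of Proposition \ref{prop-spin} (the splitting operator $L_0^{\mc{S}}$ is defined by the same formula \eqref{twisplit}), a twistor spinor $\chi\in\Ga(S_+[\frac{1}{2}])$ corresponds bijectively to a $\nabla^{\mc{S}}$-parallel section $X=(\tau,\chi)$ of the spin tractor bundle, with $\tau$ a nonzero constant multiple of $\dirac\chi$. Genericity of $\chi$ is then equivalent to $\mb{b}_{3,3}(\chi,\tau)\neq 0$, and existence of such a parallel $X$ reduces the conformal holonomy to the isotropy group $\Spin(4,4)_X$. Thus the theorem reduces to identifying this stabilizer for ``non-null'' $X$.

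For that identification I would, in direct analogy with formula \eqref{formb34}, define a $\Spin(4,4)$-invariant symmetric bilinear form on the appropriate chirality summand of $\De^{4,4}$ by $b_{4,4}(X,X')=b_{3,3}(\chi,\tau')+b_{3,3}(\chi',\tau)$, then check its invariance under the Clifford action of $\rr^{4,4}=\rr e_-\oplus\rr^{3,3}\oplus\rr e_+$ and verify that $X$ is non-null with respect to $b_{4,4}$ precisely when $b_{3,3}(\chi,\tau)\neq 0$. Next I would invoke the triality outer automorphism of $\Spin(4,4)$ to identify the spin representation (with its form $b_{4,4}$) with the standard representation $(\rr^{4,4},h_{4,4})$. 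Under triality, the stabilizer of a non-null element in $\De^{4,4}$ is carried to the stabilizer of a non-null vector in $\rr^{4,4}$, which is a standard embedded copy of $\Spin(3,4)\subset\Spin(4,4)$. This gives the ``only if'' direction, and the ``if'' direction follows by reversing the correspondence.

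The main obstacle will be the chirality bookkeeping. In contrast to the $(2,3)$ case, where $\De^{2,3}$ was irreducible as a $\Spin(2,3)$-module, the spin representation $\De^{4,4}=\De^{4,4}_+\oplus\De^{4,4}_-$ is reducible; one must track in which chirality summand the parallel spin tractor $X$ lives (dictated by the choice of positive chirality for $\chi\in\Ga(S_+[\frac{1}{2}])$ together with the sign conventions in the Clifford action of $e_\pm$), since triality with the standard $\rr^{4,4}$ only applies to one of the two $8$-dimensional summands. A secondary subtlety is to pin down the correct triality automorphism so that $b_{4,4}$ is genuinely sent to $h_{4,4}$ up to scale, which ensures that ``non-null'' transports correctly from the spin side to the vector side.
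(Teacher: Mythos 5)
Your proposal is correct and follows essentially the same route as the paper: reduce via the parallel-spin-tractor correspondence to computing the stabilizer in $\Spin(4,4)$ of a non-null $X\in\De^{4,4}$, endow $\De^{4,4}$ with the invariant form $b_{4,4}$ built from $b_{3,3}$ as in \eqref{formb44}, and use triality to identify this with the stabilizer of a non-null vector in the standard representation, i.e.\ a standard $\Spin(3,4)\subset\Spin(4,4)$ composed with a triality automorphism. Your added attention to the chirality bookkeeping is a reasonable refinement of the same argument, and the paper indeed resolves it by working with $\De^{4,4}_{+}$ as in Proposition \ref{propso34}.
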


Although not strictly necessary for the computation of the
conformal holonomy in this signature, it will be useful
to discuss the embedding $\Spin(3,4)\embed\Spin(4,4)$\ as
the stabilizer of a non-null $X\in\De^{4,4}$\ also
directly again; this will provide us with an
explicit form of the stabilizing Lie algebra in a canonical basis that
will be useful in the next section.

\subsubsection{$\Spin(3,4)\embed\Spin(4,4)$}

We have the $8$-dimensional Clifford representation $\De^{3,3}$\
of $\rr^{3,3}=(\rr^6,g_{3,3})$\ 
that splits into $\De^{3,3}_+\oplus\De^{3,3}_-$\ under $\Spin(3,3)\cong\SL(4)$.

Let $h_{4,4}$\ be the signature $(4,4)$ symmetric bilinear
form $h_{4,4}=2(de_+)(de_-)+g_{3,3}$\ on $\rr^8=\rr e_+ \oplus\rr^{3,3}\oplus \rr e_-$.
The Clifford representation of $\rr^{4,4}=(\rr^8,h_{4,4})$\ 
is defined on $\De^{4,4}_{\pm}:=\De^{3,3}_{\pm}\oplus\De^{3,3}_{\mp}$\
via
\begin{align*}
    \rr^{4,4}\t\De^{4,4}_{\pm}\goesto \De^{4,4}_{\mp},\
   &\begin{pmatrix} \rh \\ \xi \\ \si
  \end{pmatrix} \cdot
  \begin{pmatrix} \tau \\ \chi
  \end{pmatrix} =
  \begin{pmatrix} -\xi\cdot\tau-\sqrt{2}\rh \chi \\
\xi\cdot\chi+\sqrt{2}\si \tau
  \end{pmatrix}.
\end{align*}
The Clifford-invariant symmetric split signature $(4,4)$-form $B_{4,4}$, is
defined on $\De^{4,4}_+$\ and $\De^{4,4}_-$\ by
\begin{align}\label{formb44}
  B_{4,4}(
  \begin{pmatrix}
    \tau \\
    \chi
  \end{pmatrix},
  \begin{pmatrix}
    \tau' \\
    \chi'
  \end{pmatrix}
  )
  =b_{3,3}(\chi,\tau')+b_{3,3}(\chi',\tau).
\end{align}

Using this one shows
\begin{prop}\label{propso34}
Let 
$X=
\begin{pmatrix}
  \tau\\
  \chi
\end{pmatrix}
\in\De_+^{4,4}$\ be such that $b_{3,3}(\chi,\tau)=1$.
  \begin{enumerate}\itemsep=0pt
  \item There exists a basis $e_1,e_2,e_3,f_1,f_2,f_3$\ of $\rr^{3,3}$ such that \eqref{framep1}-\eqref{framep3}\ hold.
    \item
 The isotropy algebra $\g:=\so(4,4)_X$\ is a realization of $\so(3,4)$. It is
 graded as $\g_{-2}\oplus\g_{-1}\oplus\g_0\oplus\g_1\oplus\g_2,$ where, with $i=1,2,3$,
  \begin{align*}
    &\g_{-2}=\mr{span}(e_-\wedge f_i)\\
    &\g_{-1}=\mr{span}(e_-\wedge e_i-\sqrt{2}i_{e_i}f_1\wedge f_2\wedge f_3)\\
    &\g_{0}=\mr{span}(e_i\wedge f_j,\mr{for}, i\not=j\ \mr{and}\ e_i\wedge f_i+e_+\wedge e_-)\\
    &\g_{1}=\mr{span}(e_+\wedge f_i-\sqrt{2}i_{f_i}e_1\wedge e_2\wedge e_3)\\
    &\g_{2}=\mr{span}(e_+\wedge e_i).
  \end{align*}
The subspace $\mathfrak{p}=\g_0\oplus\g_1\oplus\g_2$ forms a parabolic subalgebra.
\item
$\Spin(4,4)_X=\Spin(3,4)$.
\item
The restriction of the standard representation of $\Spin(4,4)$\ to
$\Spin(3,4)$\ is the spin representation $\Delta^{3,4}$,
as is the restriction of the negative spin representation of $\Spin(4,4)$.
The restriction of the positive spin representation of $\Spin(4,4)$\
to $\Spin(3,4)$\ decomposes into a copy of $\De^{3,4}$\ and $\rr$.
\end{enumerate}
\end{prop}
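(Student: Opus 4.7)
The plan is to follow the same strategy as in Proposition~\ref{propg2}, adapted to signature $(3,3)$ and making use of Proposition~\ref{propframe33}.

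For part (1), I would mimic the argument of Proposition~\ref{propframe33}(1). The key step is to show that $\ker\gamma\chi$ and $\ker\gamma\tau$ are transversal $3$-dimensional isotropic subspaces of $\rr^{3,3}$. If there existed $0\neq\xi\in\ker\gamma\chi\cap\ker\gamma\tau$, picking isotropic $\eta$ with $g(\xi,\eta)=1$ and using the Clifford-compatibility of $b_{3,3}$ together with $\xi\cdot\eta+\eta\cdot\xi=-2g(\xi,\eta)$ would produce a contradiction with $b_{3,3}(\chi,\tau)=1$. Then $e_1,e_2,e_3$ and $f_1,f_2,f_3$ are chosen as bases of $\ker\gamma\chi$ and $\ker\gamma\tau$ satisfying $g(e_i,f_j)=\delta_{ij}$, and the remaining scalar freedom is absorbed to achieve the normalization~\eqref{framep3}.

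For part (2), the main work is a direct but careful computation. Using the isomorphism $\so(4,4)\cong\La^2\rr^{4,4}$, I would decompose an arbitrary bivector in the basis $e_+,e_1,e_2,e_3,f_1,f_2,f_3,e_-$ and apply the formula for the Clifford action given just before \eqref{formb44} to $X=\begin{pmatrix}\tau\\\chi\end{pmatrix}\in\De^{4,4}_+$. The grading is forced by the choice of isotropic vectors $e_\pm$: it is the $|2|$-grading of $\so(4,4)$ associated to the parabolic stabilizing the line $\rr e_+$, and I would simply verify piece-by-piece that each listed bivector annihilates $X$, while checking that the dimensions sum to $\dim\so(3,4)=21$, which together with the first claim forces $\g\cong\so(3,4)$. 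This dimensional bookkeeping is the main obstacle, since the combinations like $e_-\wedge e_i-\sqrt{2}\,i_{e_i}f_1\wedge f_2\wedge f_3$ involve mixed contributions from the two summands of $\De^{4,4}_+=\De^{3,3}_+\oplus\De^{3,3}_-$ and the relative sign and the factor $\sqrt{2}$ must be tracked carefully using~\eqref{framep3} to rewrite $f_1\wedge f_2\wedge f_3\cdot\chi=\tau$.

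For parts (3) and (4), I would argue as in Proposition~\ref{propg2}. Part (2) shows that $\Spin(4,4)_X$ is closed with Lie algebra $\so(3,4)$; the orbit of any non-null $X$ under $\Spin(4,4)$ on $\De^{4,4}_+$ is then open (by dimension count using $\dim\Spin(4,4)-\dim\Spin(3,4)=7$, matching the non-null locus), hence transitive on each level set of $B_{4,4}$. The exact homotopy sequence for this action, together with the standard connectedness and $\pi_1$ computations for the relevant pseudo-orthogonal groups, pins down $\Spin(4,4)_X$ as the standard copy of $\Spin(3,4)$. For part (4), since both sides are representations of a connected group, it suffices to check the decomposition of $\rr^{4,4}$, $\De^{4,4}_-$ and $\De^{4,4}_+$ under the $\so(3,4)$ computed in (2). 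For $\rr^{4,4}$ one notes that $e_+$ and $e_-$ together with the six spaces spanned by $e_i,f_i$ form an $8$-dimensional $\so(3,4)$-module, which by dimension and highest weight must be $\De^{3,4}$; the analogous argument on $\De^{4,4}_-$ works by triality, and for $\De^{4,4}_+$ the line $\rr X$ splits off an invariant trivial summand with $\De^{3,4}$ as complement.
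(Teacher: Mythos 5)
Your proposal follows essentially the route the paper intends: part (1) by the transversality argument of Proposition \ref{propframe23}/\ref{propframe33}, part (2) by direct verification against the Clifford action and a dimension count, and parts (3)--(4) by the orbit/homotopy-sequence and Lie-algebra-level arguments of Proposition \ref{propg2}. (The paper itself, in the proof of Theorem \ref{thm-holchar33}, identifies $\Spin(4,4)_X$ more slickly via triality -- the stabilizer of a non-null spinor is a triality conjugate of the stabilizer of a non-null vector in $\rr^{4,4}$, which is a standard $\Spin(3,4)$; your homotopy-sequence argument is a valid, more pedestrian substitute.)

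One point needs correcting. In part (4) you assert that inside $\De^{4,4}_+$ the line $\rr X$ splits off with ``$\De^{3,4}$ as complement''. This cannot be right: $\De^{4,4}_+$ is $8$-dimensional, so the complement $X^{\perp}$ is $7$-dimensional, whereas $\De^{3,4}$ is the $8$-dimensional spin representation of $\Spin(3,4)$. The invariant complement $X^{\perp}$ carries the restriction of $B_{4,4}$, which is nondegenerate of signature $(3,4)$, and is the \emph{standard} representation $\rr^{3,4}$ of $\Spin(3,4)$ -- exactly as in Proposition \ref{propg2}(4), and as is used later in the proofs of Proposition \ref{thm-ckf33} and Proposition \ref{prop-aes}, where $X^{\perp}\subset\De^{4,4}_+$ is identified with the conformal standard tractor bundle. (The statement of the proposition itself is misprinted here; the correct reading is ``a copy of the standard representation and $\rr$''.) The $8$-dimensional identifications you make for $\rr^{4,4}$ and $\De^{4,4}_-$ with $\De^{3,4}$ are consistent and fine.
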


\section{Conformal spin structures associated to generic $2$\
and $3$ distributions.}\label{Feffermanconstruction}

\subsection{Generic distributions and conformal spin structures as parabolic geometries} 
Let $G$ be a Lie group and $P\subset
G$ a closed subgroup, denote by $\mathfrak{g}$ and $\mathfrak{p}$ the respective 
Lie algebras. A \emph{Cartan geometry} (see e.g. \cite{sharpe}) of type $(G,P)$ is given by a
principal bundle $\mathcal{G}\to M$ with structure group $P$ and a
Cartan connection $\omega\in\Omega^1(\mathcal{G},\mathfrak{g})$, this
is a $P$-equivariant $1$-form that reproduces generators of
fundamental vector fields and defines isomorphisms
$\omega_u:T_u\mathcal{G}\to\mathfrak{g}$ for each $u\in\mathcal{G}.$
The basic example of a Cartan geometry of type $(G,P)$ is the
\emph{homogeneous model}, i.e., the bundle $p:G\to G/P$ equipped with
the Maurer Cartan form. The curvature
$\kappa\in\Omega^2(\mathcal{G},\mathfrak{g})$ of a Cartan geometry,
defined as
$$\kappa(\xi,\eta)=d\omega(\xi,\eta)+[\omega(\xi),\omega(\eta)]\ \mr{for}\ \xi,\eta\in\X(\G)$$
is a complete obstruction to local equivalence with the homogeneous
model. The curvature can equivalently be described as a $P$-equivariant function $\mathcal{G}\to\Lambda^2(\mathfrak{g}/\mathfrak{p})^*\otimes\mathfrak{g}$
and we will often take this point of view.

\emph{Parabolic geometries} are Cartan geometries of type
$(G,P)$ for a semisimple Lie group $G$ and a parabolic subgroup $P\subset G$. There
is by now an big amount of general theory available for geometries
of this type, cf. \cite{cap-slovak-par}. One of the main reasons
for their special importance is that they allow uniform Lie
algebraic  \emph{regularity} and \emph{normality} conditions on the Cartan curvature.
Assuming regularity a parabolic geometry determines a certain underlying
structure, called a regular infinitesimal flag structure.
If the parabolic geometry is also normal it is  uniquely determined by its underlying structure, and 
one obtains an equivalence of categories in this case  (cf. \cite{cap-slovak-par} for
the general statement and earlier versions for particular geometries.).

To describe the underlying structures note that every parabolic subalgebra $\mathfrak{p}$ of a semisimple Lie algebra  $\mathfrak{g}$ determines a grading of the Lie algebra
\begin{align} 
\mathfrak{g}=\mathfrak{g}_{-k}\oplus\cdots\oplus\mathfrak{g}_{-1}\oplus\mathfrak{g}_0\oplus\mathfrak{g}_1\oplus\cdots\oplus\mathfrak{g}_k
\end{align} such that $[\mathfrak{g}_i,\mathfrak{g}_j]\subset\mathfrak{g}_{i+j}$, the negative part $\mathfrak{g}_{-}=\mathfrak{g}_{-1}\oplus\cdots\oplus\mathfrak{g}_k$
is generated by $\mathfrak{g}_{-1}$ and
$\mathfrak{p}=\mathfrak{g}_0\oplus\mathfrak{g}_1\oplus\cdots\oplus\mathfrak{g}_k$. 
The positive part $\mathfrak{p}_{+}=\mathfrak{g}_1\oplus\cdots\oplus\mathfrak{g}_k$ is then a nilpotent ideal in $\mathfrak{p}$, and $\mathfrak{g}_0$
a reductive subalgebra. 
The natural action of the corresponding subgroup  $G_0$  preserves the grading on $\mathfrak{g}$, 
while the parabolic $P$ only preserves  filtration induced by the grading. 

For a parabolic geometry of type $(G,P)$ 
this Lie algebra filtration induces a filtration of the tangent bundle
and, assuming regularity, also some additional structure:
Let $T^{-1}M\subset\cdots\subset T^{-k}M=TM$ be a filtration of the tangent bundle by 
subbundles  that is compatible with forming 
Lie brackets. Then the Lie bracket induces a tensorial bracket called Levi bracket $\mathcal{L}:\mathrm{gr}(TM)\times\mathrm{gr}(TM)\to\mathrm{gr}(TM)$ 
 on the associated graded $\mathrm{gr}(TM)=\bigoplus_i T^{i}M/T^{i+1}M$. Suppose $(\mathrm{gr}(TM),\mathcal{L})$ is a bundle of 
 Lie algebras modelled on the graded Lie algebra $\mathfrak{g}_{-}$ 
and consider  the natural frame bundle $\mathcal{P}$  for  $\mathrm{gr}(TM)$ with structure group the automorphisms
 $\mathrm{Aut}_{gr}(\mathfrak{g}_{-})$ of $\mathfrak{g}_{-}$ that preserve the grading.
 A regular infinitesimal flag structure of type $(G,P)$ consists of such a filtration and a reduction of structure group of the bundle $\mathcal{P}$ with 
respect to $\mathrm{Ad}:G_0\to\mathrm{Aut}_{gr}(\mathfrak{g}_{-}).$

\subsubsection{Generic rank two distributions in dimension
five}\label{secg2} 
Suppose $\mathcal{D}$ is a generic rank $2$-distribution on a $5$-manifold, as defined via  \eqref{distspan}
and Definition \ref{defgen2}.
 Defining $T^{-1}M=\mathcal{D},$ $T^{-2}M=[\mathcal{D},\mathcal{D}]$ and $T^{-3}M=TM$ yields a filtered manifold, such that the Levi bracket defines 
isomorphisms $\Lambda^2 T^{-1}M\to T^{-2}M/T^{-1}M$ and $T^{-1}M\otimes T^{-2}M/T^{-1}M\to T^{-3}M/T^{-2}M.$ In particular, $\mathrm{Aut}_{gr}(\mathfrak{g}_{-})\cong \mathrm{GL}(2,\mathbb{R})$ and the frame bundle for $\mathrm{gr}(TM)$ can be identified with the frame bundle for the distribution.
A reduction of this frame bundle to $\GL_{+}(2,\mathbb{R})$ is the same as an orientation of the distribution.

Now let $G_2$ be the connected Lie group with Lie algebra the split real
form of the simple complex exceptional Lie algebra
$\mathfrak{g}_2^{\mathbb{C}}$ and with fundamental group
$\mathbb{Z}_2$.  
A grading of $\mathfrak{g}_2$  corresponding to a maximal  parabolic subalgebra $\mathfrak{p}$ was introduced in Proposition \ref{propg2}. It is easy to see 
that $(\mathrm{gr}(TM),\mathcal{L})$ from above  is modelled on the negative part $\mathfrak{g}_{-}$ of that grading.
Thus, for a parabolic subgroup $P\subset G_2$ with Lie algebra $\mathfrak{p}$, a regular infinitesimal flag structure of type $(G_2,P)$ 
is the same as a reduction  of the frame bundle of a  generic $2$-distribution $\mathcal{D}$ to the structure group $G_0$.
The usual choice for the parabolic is to define it as the stabilizer $P'$ of the line through the highest weight
vector $v$ in the $7$-dimensional fundamental representation of $G_2$. In that case the reductive subgroup $G'_0\cong \GL(2,\mathbb{R}),$ and the regular infinitesimal flag structure is just a generic rank $2$ distribution.
However, in the context of this paper we will use the \emph{connected} parabolic subgroup $P$, i.e., the stabilizer of the ray $\mathbb{R}_+ v$ through the highest weight vector. Then $G_0$ is isomorphic to the group
$\GL_{+}(2,\mathbb{R})$, and a regular infinitesimal flag structure encodes an \emph{oriented}  distribution. Thus, the equivalence result for parabolic geometries implies:

\begin{prop}
With the above choice of Lie groups, there is an equivalence
of categories between  regular, normal parabolic geometries of type
$(G_2,P)$  and oriented generic rank $2$ distributions on $5$-manifolds
\end{prop}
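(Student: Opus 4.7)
The strategy is to invoke the general equivalence of categories between regular, normal parabolic geometries of a given type and the corresponding regular infinitesimal flag structures (cf.\ \cite{cap-slovak-par}). This general result reduces the task to verifying that a regular infinitesimal flag structure of type $(G_2,P)$ with the connected $P$ chosen above is precisely an oriented generic rank $2$ distribution on a $5$-manifold.

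First I would read off from the grading displayed in Proposition \ref{propg2} that $\dim\mathfrak{g}_{-1}=2$, $\dim\mathfrak{g}_{-2}=1$ and $\dim\mathfrak{g}_{-3}=2$, so the filtration $T^{-1}M\subset T^{-2}M\subset T^{-3}M=TM$ has the ranks $2,3,5$ required for a generic rank $2$ distribution. A short direct computation in the explicit basis from Proposition \ref{propg2} shows that the brackets $[\mathfrak{g}_{-1},\mathfrak{g}_{-1}]\to\mathfrak{g}_{-2}$ and $[\mathfrak{g}_{-1},\mathfrak{g}_{-2}]\to\mathfrak{g}_{-3}$ are nondegenerate and surjective. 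Under the Levi bracket $\mathcal{L}$ on $\gr(TM)$, these algebraic conditions correspond exactly to the genericity requirements of Definition \ref{defgen2}: that $\mathcal{D}^1=[\mathcal{D},\mathcal{D}]$ has constant rank $3$ and that $[\mathcal{D},\mathcal{D}^1]=TM$. Consequently, a filtered manifold whose associated graded bundle of Lie algebras is modelled on $\mathfrak{g}_{-}$ is precisely the filtration attached to a generic rank $2$ distribution.

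Second, I would identify the structure group $G_0$ and its action on $\mathfrak{g}_{-1}$. From Proposition \ref{propg2} one has $\mathfrak{g}_0\cong\mathfrak{gl}(2,\mathbb{R})$, with the bracket action on $\mathfrak{g}_{-1}\cong\mathbb{R}^2$ being the standard representation. The choice of the \emph{connected} parabolic $P$, the stabilizer of the ray $\mathbb{R}_{+}v$ through the highest weight vector, yields $G_0\cong\GL_{+}(2,\mathbb{R})$ acting via the standard representation on $\mathfrak{g}_{-1}$. A reduction of the frame bundle of $T^{-1}M=\mathcal{D}$ from $\GL(2,\mathbb{R})$ to $\GL_{+}(2,\mathbb{R})$ is nothing but an orientation of $\mathcal{D}$. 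Hence the underlying regular infinitesimal flag structures of type $(G_2,P)$ are in natural bijection with oriented generic rank $2$ distributions on $5$-manifolds.

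Combining these two observations with the general equivalence of categories for parabolic geometries yields the proposition: an oriented generic rank $2$ distribution determines its regular infinitesimal flag structure of type $(G_2,P)$, which in turn extends uniquely up to isomorphism to a regular normal parabolic geometry, and morphisms correspond on both sides. The only algebraic content specific to $G_2$ is the bracket computation in $\mathfrak{g}_{-}$ in the first step; this is the main place where one must be careful, but it is straightforward from the explicit basis in Proposition \ref{propg2}.
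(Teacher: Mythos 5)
Your proof is correct and follows essentially the same route as the paper: both reduce the statement to the general equivalence between regular normal parabolic geometries and regular infinitesimal flag structures, check that $\mathrm{gr}(TM)$ with its Levi bracket is modelled on the graded nilpotent $\mathfrak{g}_{-}$ of the $|3|$-grading of $\mathfrak{g}_2$ from Proposition \ref{propg2}, and observe that the choice of the connected parabolic $P$ (stabilizer of the ray $\mathbb{R}_+v$) forces $G_0\cong\GL_{+}(2,\mathbb{R})$, so the underlying structure is an \emph{oriented} generic rank $2$ distribution. You merely make explicit the bracket verification that the paper dispatches with ``it is easy to see''.
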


\subsubsection{Generic rank three distributions in dimension
six}\label{secspin34} Now suppose $\mathcal{D}$ is a  generic rank $3$-distributions on a 
$6$-manifold, i.e., values of sections
$\xi,\eta\in\Gamma(\mathcal{D})$ and their Lie brackets $[\xi,\eta]$
span the tangent bundle $TM$, recall \eqref{distspan} and Definition \ref{defgen3}.
Then the distribution gives rise to the filtration $T^{-1}M=\mathcal{D}\subset T^{-2}M=TM$
such that the Levi bracket $\mathcal{L}:\Lambda^2 T^{-1}M\to TM/T^{-1}M$ is an isomorphism.

In this case there is a grading of $\mathfrak{so}(3,4)$ corresponding to a parabolic $\mathfrak{p}$, described explicitly in Proposition \ref{propso34},
 such that in every point the graded Lie algebra $(\mathrm{gr}(T_xM),\mathcal{L}_x)$ is isomorphic to $\mathfrak{g}_{-}$. 
 Now consider as a group with Lie algebra $\mathfrak{p}$ the  parabolic subgroup $P\subset
\Spin(3,4)$ defined as  the stabilizer of a ray through a highest weight vector in
the real spinor representation $\Delta^{3,4}.$ Then $P$ does not
contain the element $-1$ acting as minus the identity on the spin
representation, and one easily  verifies that  $P$ is connected. It
follows that the $2$-fold covering $\Spin(3,4)\to \SO_0(3,4)$
restricts to a diffeomorphism from $P$ onto the connected component of
the parabolic $P'\subset \SO_0(3,4)$ defined as the stabilizer of an
isotropic $3$-dimensional subspace of $\mathbb{R}^{3,4}.$ In
particular, the Levi subgroup $G_0\subset P\subset \Spin(3,4)$ is seen
to be $\GL_{+}(3,\mathbb{R})$. Thus, invoking the general theory yields:

\begin{prop}
With the above choice of Lie groups, there is an equivalence
of categories between  regular, normal parabolic geometries of type
$(SO(3,4),P)$  and oriented generic rank $3$ distributions on $6$-manifolds
\end{prop}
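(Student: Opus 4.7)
The plan is to apply the general equivalence of categories between regular normal parabolic geometries and regular infinitesimal flag structures from \cite{cap-slovak-par}. This reduces the claim to two tasks: (i) identify the underlying regular infinitesimal flag structure of type $(\Spin(3,4),P)$ with an oriented generic rank $3$ distribution on a $6$-manifold, and (ii) verify the Lie algebra cohomology condition that ensures the passage from the underlying structure to a Cartan connection is functorial and bijective on isomorphism classes, namely that $H^1(\g_-,\g)$ vanishes in non-negative homogeneities.

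For (i), I use the explicit $|2|$-grading $\g=\g_{-2}\oplus\g_{-1}\oplus\g_0\oplus\g_1\oplus\g_2$ of $\so(3,4)$ from Proposition \ref{propso34}: one has $\dim\g_{-1}=\dim\g_{-2}=3$ and the Lie bracket $\Lambda^2\g_{-1}\to\g_{-2}$ is an isomorphism. A filtration $T^{-1}M\subset T^{-2}M=TM$ whose associated graded is pointwise modelled on $\g_-$ amounts precisely to a rank $3$ subbundle $\mc{D}=T^{-1}M$ for which the Levi bracket $\Lambda^2\mc{D}\to TM/\mc{D}$ is a fiberwise isomorphism, and by Definition \ref{defgen3} this is exactly a generic rank $3$ distribution. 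The preceding discussion identifies the Levi subgroup $G_0\subset P$ with $\GL_+(3,\rr)$, so a reduction of the graded frame bundle $\mathcal{P}$ of $\gr(TM)$ along $\Ad:G_0\to \Aut_{gr}(\g_-)$ is the same as an orientation of $\mc{D}$.

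For (ii), the condition is cohomological and can be checked by Kostant's theorem, which describes $H^*(\g_-,\g)$ as a $G_0$-module in terms of Weyl group elements. For the contact-like gradings arising from stabilizers of rays through highest weight vectors, it is well known (cf.\ Chapter 3 of \cite{cap-slovak-par}) that the only non-trivial harmonic curvature components sit in $H^2(\g_-,\g)$ in strictly positive homogeneities, and in particular $H^1(\g_-,\g)$ has no components in non-negative homogeneities. This is the hypothesis under which the equivalence theorem of \cite{cap-slovak-par} gives a bijection between isomorphism classes of regular normal parabolic geometries of type $(\Spin(3,4),P)$ and isomorphism classes of underlying regular infinitesimal flag structures.

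Combining (i) and (ii), regular normal parabolic geometries of type $(\Spin(3,4),P)$ correspond to oriented generic rank $3$ distributions on $6$-manifolds, with morphisms on both sides matching under the equivalence. The main obstacle is the cohomological verification in (ii); however, for the specific $|2|$-grading at hand this is a standard Kostant computation that is directly available in the literature, so no new ingredient beyond the general theorem is required.
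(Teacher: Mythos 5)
Your proposal is correct and follows essentially the same route as the paper: identify the underlying regular infinitesimal flag structure (via the $|2|$-grading of $\so(3,4)$ from Proposition \ref{propso34} and the identification $G_0\cong\GL_+(3,\rr)$) with an oriented generic rank $3$ distribution, then invoke the general equivalence theorem of \cite{cap-slovak-par}. You make explicit the cohomological hypothesis $H^1_l(\g_-,\g)=0$ for $l>0$ that the paper leaves implicit in ``invoking the general theory''; only note that the grading here is not a contact grading ($\dim\g_{-2}=3$, not $1$), though the Kostant computation you appeal to does give the required vanishing.
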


\subsubsection{Conformal spin structures}\label{secspin}

 A conformal spin structure (see section \ref{conformalspinstr}) can be equivalently described as a normal
parabolic geometry of type $(\Spin(p+1,q+1),\tilde{P}),$ where
$\tilde{P}$ is the stabilizer of a positive ray through a null-vector
in $\mathbb{R}^{p+1,q+1}$.  For this choice of groups the parabolic subgroup
$\tilde{P}$ is connected and the reductive  subgroup
$\tilde{G}_0\subset \tilde{P}$ is precisely $\CSpin(p,q).$

We remark that in this case the Cartan structure bundle $\ti\G\goesto M$\
can be realized as the adapted frame bundle of the standard
tractor bundle $\mc{T}$\ introduced in section \ref{secstd}; i.e.,
it is the frame bundle of $(\mc{T},\mb{h})$\ that additionally
satisfies the canonical filtration of $\mc{T}$, cf.\ \cite{cap-gover-cr-tractors}. Conversely, the standard tractor bundle is the associated bundle
$\mc{T}=\tilde{\mc{G}}\times_{\tilde{P}}\mathbb{R}^{p+1,q+1}.$

\subsubsection{Tractor bundles}\label{sec-tractor}
More generally, there are  associated vector bundles carrying canonical linear connections for all types of parabolic geometries: 

Suppose $(\mathcal{G},\omega)$ is a regular, normal parabolic geometry of type $(G,P)$.
Given a $G$-representation $\mathbb{V}$, one can form the associated bundle $\mathcal{V}=\mathcal{G}\times_{P}\mathbb{V}$. Such a vector
bundle is called a \emph{tractor bundle}. 
Let $\G':=\G\times_PG$\ be the \emph{extended Cartan bundle}, which
is now a $G$-principal bundle over $M$.
Then we can extend $\om$
canonically in an equivariant way to the \emph{extended $G$-principal bundle connection form} $\om'\in\Om^1(\G',\g)$. Since
$\mc{V}=\G\times_P \mathbb{V}$\ can also be written as $\G'\times_G \mathbb{V}$,
we see that $\om'$ induces a linear connection on $\mc{V}$,
which is the \emph{normal tractor connection} $\nabla^{\mathcal{V}}$,
cf. eg. \cite{cap-gover-tractor}.

\subsection{The Fefferman-type constructions $\mc{D}\squig\mc{C}_{\mc{D}}$}\label{sec-fefferman}
We prove that to any oriented generic rank $2$ distribution on a $5$ manifold, and to any oriented rank $3$ distribution on a $6$ manifold
there is an associated conformal spin structure. This is done via a Fefferman-type construction in the sense on A. \v Cap.

\subsubsection{Fefferman-type constructions over the same manifold}

Consider an inclusion of simple Lie groups
$G\hookrightarrow\tilde{G},$ and parabolic subgroups
$\tilde{P}\subset \tilde{G}$, $P=\tilde{P}\cap G$, and suppose the
inclusion induces a diffeomorphism of the corresponding homogeneous
spaces $$G/P\cong\tilde{G}/\tilde{P}.$$ Then there is a is a
functorial construction, see \cite{cap-constructions}, associating to
a parabolic geometry $(\mathcal{G},\omega)$ of type $(G,P)$ a
parabolic geometry of type $(\tilde{G},\tilde{P})$: First one extends
the Cartan bundle to a $\tilde{P}$-principal bundle
$$\tilde{\mathcal{G}}=\mathcal{G}\times_{P}\tilde{P},$$ and then one
shows that there is a unique extension of $\omega$ to a Cartan
connection
$\tilde{\omega}\in\Omega^1(\tilde{\mathcal{G}},\tilde{\mathfrak{g}})$
on $\tilde{\mathcal{G}}$.

It is shown in \cite{doubrov-slovak-inclusions} that there is a very
limited number of Lie group data that give rise to such a
Fefferman-type construction over the same base manifold; indeed, there are only three families of such constructions. We discuss the two constructions that give rise to conformal structures.
In fact, assuming orientability of the distributions, we will see that
we get induced conformal spin structures.

\subsubsection{The Fefferman-type construction $G_2\hookrightarrow \Spin(3,4)$}\label{sec-sig34}
Let $P\subset G_2$ and $\tilde{P}\subset \Spin(3,4)$ be the parabolic subgroups from \ref{secg2} and \ref{secspin}, i.e., the subgroups
 defined as the stabilizer of a positive ray through a null-vector in $\mathbb{R}^{3,4}.$ Then $P=\tilde{P}\cap G_2$.
Since $G_2/P$ and $\Spin(3,4)/\tilde{P}$ are compact, connected
and have the same dimension, the homogeneous spaces are indeed diffeomorphic: we have
$$G_2/P\cong \Spin(3,4)/\tilde{P}\cong S^2\times S^3.$$
The functorial construction discussed above
thus assigns to a parabolic geometry $(\mathcal{G},\omega)$ of type
$(G_2,P)$ a parabolic geometry $(\tilde{\mathcal{G}},\tilde{\omega})$
of type $(\Spin(3,4),P).$ 

\begin{prop} An oriented generic  rank
$2$-distribution on a $5$-manifold $M$ naturally induces a conformal
spin structure of signature $(2,3)$ on $M$.
\end{prop}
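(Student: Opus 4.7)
The plan is to combine the equivalence-of-categories results already established with the Fefferman-type construction for $G_2 \hookrightarrow \Spin(3,4)$ developed in \cite{cap-constructions}. By the equivalence stated in section~\ref{secg2}, the oriented generic rank~$2$ distribution $\mathcal{D}$ on $M$ corresponds to a unique (up to isomorphism) regular, normal parabolic geometry $(\mathcal{G}, \omega)$ of type $(G_2, P)$; I take this as the starting point of the construction.

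Next I apply the Fefferman-type extension. Since $P = \tilde{P} \cap G_2$ and the homogeneous spaces agree, $G_2/P \cong \Spin(3,4)/\tilde{P}$, one extends the bundle to $\tilde{\mathcal{G}} = \mathcal{G} \times_P \tilde{P}$ and extends $\omega$ equivariantly along the Lie algebra inclusion $\mathfrak{g}_2 \hookrightarrow \mathfrak{so}(3,4)$ to a unique Cartan connection $\tilde{\omega} \in \Omega^1(\tilde{\mathcal{G}}, \mathfrak{so}(3,4))$ of type $(\Spin(3,4), \tilde{P})$. Existence and uniqueness of $\tilde{\omega}$ are supplied by the general machinery.

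The main obstacle is that this Fefferman construction does not a priori yield a \emph{normal} parabolic geometry, which is what is needed in order to invoke the equivalence with conformal spin structures from section~\ref{secspin}. Normality amounts to showing that the curvature $\tilde{\kappa}$, viewed as a $P$-equivariant function $\tilde{\mathcal{G}} \to \Lambda^2(\mathfrak{so}(3,4)/\tilde{\mathfrak{p}})^* \otimes \mathfrak{so}(3,4)$, lies in the kernel of the Kostant codifferential $\partial^*$ associated to $\tilde{\mathfrak{p}}$. I would verify this by reducing to an algebraic statement: the harmonic curvature of the $(G_2, P)$-geometry is concentrated in a single irreducible $G_0$-representation (the Cartan quartic), so one compares $\partial^*_{\mathfrak{p}}$ and $\partial^*_{\tilde{\mathfrak{p}}}$ along the inclusion and shows by a representation-theoretic computation, or by appealing to the general criterion of \cite{cap-constructions}, that $\partial^* \tilde{\kappa} = 0$.

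Once normality is secured, the conclusion is immediate: because $\tilde{P}$ has been chosen as a subgroup of $\Spin(3,4)$ and not merely of $\SO_o(3,4)$, the equivalence of section~\ref{secspin} applied to $(\tilde{\mathcal{G}}, \tilde{\omega})$ produces a conformal \emph{spin} structure $\mathcal{C}_{\mathcal{D}}$ of signature $(2,3)$ on $M$, whose underlying adapted frame bundle of the standard tractor bundle is $\tilde{\mathcal{G}}$. Functoriality of both equivalences of categories together with the functoriality of the Fefferman extension gives the naturality of the assignment $\mathcal{D} \squig \mathcal{C}_{\mathcal{D}}$.
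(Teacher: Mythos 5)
Your proposal is correct and follows essentially the same route as the paper: the proposition is an immediate consequence of the functorial Fefferman-type extension of $(\mathcal{G},\omega)$ to a parabolic geometry $(\tilde{\mathcal{G}},\tilde{\omega})$ of type $(\Spin(3,4),\tilde{P})$, combined with the equivalences of categories from sections \ref{secg2} and \ref{secspin}. The only (harmless) deviation is that you fold the normality verification into this proof, whereas the paper defers it to Proposition \ref{Feff23}; strictly speaking normality is not needed merely to read off the underlying conformal spin structure from $(\tilde{\mathcal{G}},\tilde{\omega})$, but only for the later identification of $\tilde{\omega}$ with the canonical normal conformal Cartan connection and for the tractor-calculus applications.
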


\subsubsection{The Fefferman-type construction $\Spin(3,4)\hookrightarrow
\Spin(4,4)$}\label{sec-sig44}
Let $\tilde{P}\subset \Spin(4,4)$ be the parabolic subgroup defined as
the stabilizer of the positive ray through a null-vector in
$\mathbb{R}^{4,4}$.  Then, since
as a $\Spin(3,4)$ representation $\rr^{4,4}=\De^{3,4}$, the intersection $P=\tilde{P}\cap \Spin(3,4)$ is precisely the parabolic  introduced in
\ref{sec-sig34}.
Using again that generalized flag manifolds are compact and counting
dimensions one obtains 
\begin{align*} \Spin(3,4)/P \cong \Spin(4,4)/\tilde{P}\cong S^3\times S^3. 
\end{align*} 
Thus, the Fefferman-type construction  associates to a parabolic
geometry of type $(\Spin(3,4),P)$
a parabolic geometry of type $(\Spin(4,4),\tilde{P})$.

\begin{prop} An oriented  generic rank
$3$-distribution on a $6$-manifold naturally induces a conformal spin
structure of signature $(3,3)$ on the manifold.
\end{prop}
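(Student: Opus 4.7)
The plan is to realise this proposition as an application of \v{C}ap's Fefferman-type construction for the inclusion $\Spin(3,4)\hookrightarrow \Spin(4,4)$, following exactly the template used for the $G_2\hookrightarrow\Spin(3,4)$ case in Section~\ref{sec-sig34}. First I would translate the geometric input into parabolic-geometry terms: by the equivalence of categories of Section~\ref{secspin34}, an oriented generic rank $3$-distribution on a $6$-manifold $M$ is the same datum as a regular normal parabolic geometry $(\G,\om)$ of type $(\Spin(3,4),P)$, where $P$ is the stabiliser of a ray through a highest weight spinor in $\De^{3,4}$. The desired output, by Section~\ref{secspin}, is a regular normal parabolic geometry of type $(\Spin(4,4),\ti P)$ with $\ti P$ the stabiliser of a positive null-ray in $\rr^{4,4}$.

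The key preparatory step is to verify the two algebraic prerequisites for a Fefferman-type construction over the same base manifold: (i) $P=\ti P\cap \Spin(3,4)$, and (ii) the inclusion-induced map $\Spin(3,4)/P\to\Spin(4,4)/\ti P$ is a diffeomorphism. Both follow from the $\Spin(3,4)$-module identification $\rr^{4,4}\cong \De^{3,4}$ supplied by triality, the very identification already exploited in the proof of Theorem~\ref{thm-holchar33}: modulo this identification, a null-ray in $\rr^{4,4}$ stabilised by $\ti P$ corresponds to a ray through a highest weight spinor in $\De^{3,4}$, and the intersection with $\Spin(3,4)$ is by definition $P$. For (ii), the natural equivariant map is smooth and injective; as both source and target are compact $6$-manifolds (each diffeomorphic to $S^3\times S^3$), it is automatically a diffeomorphism.

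With these two compatibilities in hand I would invoke the functorial construction of \cite{cap-constructions}: form the extended principal bundle $\ti\G:=\G\times_P \ti P$ over $M$, and extend $\om$ in the canonical, essentially unique, way to a normal Cartan connection $\ti\om\in\Om^1(\ti\G,\so(4,4))$ of type $(\Spin(4,4),\ti P)$. Reading $(\ti\G,\ti\om)$ back through the equivalence of categories of Section~\ref{secspin} yields the desired conformal spin structure of signature $(3,3)$ on $M$. The only genuinely nontrivial point is the algebraic matching in (i)--(ii); once this is in place the statement is a formal consequence of the general machinery, and the existence, uniqueness, and functoriality of the extension $\ti\om$ need not be redone here.
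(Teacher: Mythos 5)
Your proposal follows the paper's own route exactly: identify $\rr^{4,4}\cong\De^{3,4}$ as a $\Spin(3,4)$-module so that $P=\ti P\cap\Spin(3,4)$ is the parabolic from the distribution side, observe $\Spin(3,4)/P\cong\Spin(4,4)/\ti P\cong S^3\times S^3$ by compactness and dimension count, and then invoke the functorial extension of \cite{cap-constructions}. The one caveat is that the paper does not claim \emph{normality} of the extended Cartan connection at this stage --- that is a genuinely separate issue settled later in Proposition~\ref{Feff33} --- but normality is not needed to read off the underlying conformal spin structure, so your argument for the proposition as stated is sound.
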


\subsubsection{Normality of the induced parabolic geometry}
 For parabolic geometries there is a uniform algebraic normalization condition,  defined in terms of the $P$-equivariant Kostant codifferential 
 \begin{align*}
 \partial^*:\Lambda^2(\mathfrak{g}/\mathfrak{p})^*\otimes\mathfrak{g}\to(\mathfrak{g}/\mathfrak{p})^*\otimes\mathfrak{g}
 \end{align*}
 given on decomposable elements as 
 \begin{align*}
 \partial^*(X\wedge Y\otimes Z)=X\otimes [Y,Z]-Y\otimes[X,Z]-[X,Y]\otimes Z.
 \end{align*}
A parabolic geometry $(\mathcal{G}\to M,\omega)$ of type $(G,P)$ is called \emph{normal} if its curvature function $\kappa:\mathcal{G}\to\Lambda^2(\mathfrak{g}/\mathfrak{p})^*\otimes\mathfrak{g}$ satisfies $\partial^*\circ\kappa=0.$ 
The curvature of a normal parabolic geometry projects to a simpler curvature quantity, the \emph{harmonic curvature} $\kappa_{H}.$ The harmonic curvature takes values in a $G_0$-submodule that is explicitly computable via Kostant's version of the Bott-Borel-Weil theorem \cite{kostant-61}.
 
For applications it will be essential that the conformal parabolic geometries attached to generic distributions are normal.  To verify compatibility of a Fefferman-type construction with normality  is in general a non-trivial problem.
Suppose $\tilde{\omega}$ is the extension 
to $\tilde{\mathcal{G}}=\mathcal{G}\times_{P}\tilde{P}$ of a regular, normal Cartan connection form $\omega$. 
Let $I:\Lambda^2(\mathfrak{g}/\mathfrak{p})^*\otimes\mathfrak{g}\to\Lambda^2(\tilde{\mathfrak{g}}/\tilde{\mathfrak{p}})^*\otimes\tilde{\mathfrak{g}}$
be the induced map from the inclusion $\mathfrak{g}\hookrightarrow\tilde{\mathfrak{g}}$. Then the curvature functions $\kappa:\mathcal{G}\to\Lambda^2(\mathfrak{g}/\mathfrak{p})^*\otimes\mathfrak{g}$
of $\omega$ and $\tilde{\kappa}:\tilde{\mathcal{G}}\to\Lambda^2(\tilde{\mathfrak{g}}/\tilde{\mathfrak{p}})^*\otimes\tilde{\mathfrak{g}}$ of $\tilde{\omega}$ are related by
\begin{align}\label{curvatures}
\tilde{\kappa}(u)=I\circ\kappa(u)\end{align} for all $u\in\mathcal{G}$, and this determines $\tilde{\kappa}$ by equivariance, see \cite{cap-zadnik-chains}. 
We now ask is whether the parabolic geometry 
$(\tilde{\mathcal{G}},\tilde{\omega})$ is normal, i.e., whether $\tilde{\partial}^*\circ\tilde{\kappa}=0$.
By \eqref{curvatures} we can rephrase this as to whether $\kappa$ takes values in the $(G\cap\tilde{P})$-submodule
 $I^{-1}(\mathrm{ker}(\tilde{\partial}^*))\subset\Lambda^2(\tilde{\mathfrak{g}}/\tilde{\mathfrak{p}})^*\otimes\tilde{\mathfrak{g}}$.

For Fefferman-type constructions over the same manifold, i.e., in those cases where $P=(G\cap\tilde{P})$, this problem can be considerably simplified
if one uses the following strong result:
\begin{prop}[\cite{cap-correspondence}]\label{E}
Suppose $\mathbb{E}\subset\mathrm{ker}(\partial^*)\subset\Lambda^2(\mathfrak{g}/\mathfrak{p})^*\otimes\mathfrak{g}$ is a $P$-submodule and consider 
the $G_0$-module $\mathbb{E}_0:=\mathbb{E}\cap\mathrm{ker}(\square). $
Let $(\mathcal{G}\to M,\omega)$ be a  regular, normal parabolic geometry, which is furthermore torsion-free.
Then, if the harmonic curvature $\kappa_{H}$ takes values in $\mathbb{E}_0$ the curvature function $\kappa$ takes values in $\mathbb{E}$.
 \end{prop}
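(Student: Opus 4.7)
The plan is to exploit the homogeneity filtration on $\Lambda^2(\mathfrak{g}/\mathfrak{p})^*\otimes\mathfrak{g}$ and argue inductively, using the normality condition, the differential Bianchi identity, and torsion-freeness. First I would note that, although the grading of $\mathfrak{g}$ is only $G_0$-invariant, the induced filtration by homogeneity is $P$-invariant, and so restricts to $P$-invariant filtrations of $\mathrm{ker}(\partial^*)$ and of $\mathbb{E}$. In each homogeneity $h$ the associated $G_0$-module splits under the Hodge decomposition as $\mathrm{ker}(\partial^*)^{(h)} = \mathrm{im}(\partial^*)^{(h)}\oplus\mathrm{ker}(\square)^{(h)}$, and the harmonic projection $\kappa\mapsto\kappa_H$ is precisely the projection onto the $\mathrm{ker}(\square)$-summand; by hypothesis this projection already lands in $\mathbb{E}_0\subset\mathbb{E}$. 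The whole content of the proposition is therefore to control the $\mathrm{im}(\partial^*)$-contributions to $\kappa$ in each homogeneity degree.

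The core inductive step uses the differential Bianchi identity for the Cartan curvature. Expressed for the curvature function $\kappa:\mathcal{G}\to\Lambda^2(\mathfrak{g}/\mathfrak{p})^*\otimes\mathfrak{g}$, this identity together with $\partial^*\kappa=0$ yields a recursive formula of the form $\partial\kappa^{(h+1)}=\Phi(\nabla\kappa^{(\leq h)})$, where $\nabla$ denotes the invariant derivative induced by $\omega$ and $\Phi$ is an algebraic, $P$-equivariant combination. On the relevant piece of $\Lambda^2\otimes\mathfrak{g}$ the Kostant Laplacian $\square=\partial\partial^*+\partial^*\partial$ restricts to an invertible $G_0$-equivariant operator on $\mathrm{im}(\partial^*)\oplus\mathrm{im}(\partial)$, and combining normality with the Bianchi identity allows one to solve for the $\mathrm{im}(\partial^*)$-component of $\kappa^{(h+1)}$ as an explicit $P$-equivariant differential expression in components of $\kappa$ at homogeneity $\leq h$. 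Because $\mathbb{E}$ is a $P$-submodule, the induced invariant differential sends $\mathbb{E}$-valued data to $\mathbb{E}$-valued data and $\partial^*$ is $P$-equivariant, so the formulas propagate the membership $\kappa\in\mathbb{E}$ from one homogeneity to the next.

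Torsion-freeness supplies the base of the induction: for a torsion-free geometry $\kappa$ takes values in $\Lambda^2(\mathfrak{g}/\mathfrak{p})^*\otimes\mathfrak{p}$, which raises the minimal homogeneity at which $\kappa$ can be non-zero and ensures the lowest non-trivial component of $\kappa$ is already harmonic, hence contained in $\mathbb{E}_0$ by assumption. Without torsion-freeness there could be obstructing contributions in homogeneity zero (such as a soldering form component) that the Bianchi mechanism cannot absorb.

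The main obstacle is the rigorous justification of the recursive formula—making precise how the Hodge decomposition together with $\partial^*\kappa=0$ and the Bianchi identity uniquely determine the $\mathrm{im}(\partial^*)$-part of each homogeneous component of $\kappa$ in terms of lower-homogeneity data, and verifying that at each step the correction terms remain inside the $P$-submodule $\mathbb{E}$ rather than leaking into a $G_0$-complement. Once the homogeneity bookkeeping and the $P$-equivariance of all operations are checked, the conclusion $\kappa\in\mathbb{E}$ follows by induction on homogeneity.
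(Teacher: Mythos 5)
A preliminary remark on the comparison you asked for: the paper does not prove Proposition \ref{E} at all --- the statement is imported from \cite{cap-correspondence} and used as a black box in the normality arguments for the two Fefferman-type constructions --- so there is no in-paper proof to measure your attempt against. Judged against the argument in that reference, your outline identifies the correct strategy: work with the $P$-invariant homogeneity filtration, use Kostant's Hodge decomposition $\mathrm{ker}(\partial^*)=\mathrm{im}(\partial^*)\oplus\mathrm{ker}(\square)$ on the associated graded to reduce everything to controlling the $\mathrm{im}(\partial^*)$-contributions, and run an induction on homogeneity (equivalently, a lowest-nonvanishing-component argument applied to the projection of $\kappa$ to the bundle modelled on the $P$-module $\mathrm{ker}(\partial^*)/\mathbb{E}$) driven by the Bianchi identity. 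That is essentially the proof in the cited source, so your route is the standard one rather than a genuinely different one.

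There is, however, one substantive gap, and it sits exactly at the hypothesis you treat most casually: torsion-freeness. The Bianchi identity for a Cartan curvature expresses $\partial\kappa$ through the alternated invariant derivative of $\kappa$ \emph{together with} a term quadratic in the curvature, of the shape $\sum_{\mathrm{cyc}}\kappa(\kappa(\xi_i,\xi_j),\xi_k)$; your recursion $\partial\kappa^{(h+1)}=\Phi(\nabla\kappa^{(\leq h)})$ silently discards this quadratic term, and without it the claim that the right-hand side preserves $\mathbb{E}$ is unjustified. It is precisely torsion-freeness that removes it: since $\kappa$ then takes values in $\Lambda^2(\mathfrak{g}/\mathfrak{p})^*\otimes\mathfrak{p}$, the element $\kappa(\xi_i,\xi_j)$ lies in $\mathfrak{p}$ and inserts trivially into $\kappa$, which only sees arguments modulo $\mathfrak{p}$. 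So torsion-freeness is needed in the \emph{inductive step}, not where you place it: the fact you do attribute to it --- that the lowest nonzero homogeneous component of $\kappa$ is harmonic --- holds for every regular normal parabolic geometry and needs no extra hypothesis. Beyond this, the step you yourself flag as the ``main obstacle'' (that each homogeneous $\mathrm{im}(\partial^*)$-component of $\kappa$ is forced into $\mathbb{E}$ by lower-order data together with $P$-equivariance of $\partial^*$ and the $P$-submodule property of $\mathbb{E}$) is the entire technical content of the proposition, so as written your text is an accurate road map to the proof in \cite{cap-correspondence} rather than a proof.
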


\subsubsection{Normality for $\Spin(3,4)\hookrightarrow
\Spin(4,4)$}
The harmonic curvature $\kappa_{H}$ of a regular, normal parabolic geometry of type $(\Spin(3,4),P)$  takes values in an irreducible $27$-dimensional 
$G_0=\GL_{+}(3,\mathbb{R})$-subrepresentation of $\Lambda^2(\mathfrak{g}/\mathfrak{p})^*\otimes\mathfrak{g}_0$.
This implies that  the geometry is torsion-free, and we may apply the above result to prove:

\begin{prop}\label{Feff33}
 The   Fefferman-type construction associates to a  regular, normal parabolic geometry of type $(\Spin(3,4),P)$
 a normal parabolic geometry of type $(\Spin(4,4),\tilde{P})$.
\end{prop}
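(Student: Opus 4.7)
The plan is to apply Proposition \ref{E} with the $P$-submodule
\[
  \mathbb{E} := \ker(\partial^*) \cap I^{-1}(\ker(\tilde{\partial}^*)) \subset \Lambda^2(\mathfrak{g}/\mathfrak{p})^* \otimes \mathfrak{g}.
\]
By the curvature relation $\tilde{\kappa} = I \circ \kappa$ from \eqref{curvatures}, showing that $\kappa$ takes values in $\mathbb{E}$ is equivalent to $\tilde{\kappa}$ taking values in $\ker(\tilde{\partial}^*)$, i.e., to the normality of $\tilde{\omega}$. The geometry $(\mathcal{G},\omega)$ is regular and normal by assumption, and torsion-free by the discussion preceding the proposition, so the hypotheses of Proposition \ref{E} are met and it suffices to check that the harmonic curvature satisfies $\kappa_H \subset \mathbb{E}_0 := \mathbb{E} \cap \ker(\square)$.

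Since $\kappa_H$ automatically takes values in $\ker(\partial^*) \cap \ker(\square)$, the only genuine content is the inclusion $I(\kappa_H) \subset \ker(\tilde{\partial}^*)$. By the remark preceding the proposition, the values of $\kappa_H$ lie in a single irreducible $27$-dimensional $G_0 = \GL_+(3,\mathbb{R})$-submodule $\mathbb{H} \subset \Lambda^2(\mathfrak{g}/\mathfrak{p})^* \otimes \mathfrak{g}_0$. Since $G_0 \subset P \subset \tilde{P}$, the composition $\tilde{\partial}^* \circ I$ restricts to a $G_0$-equivariant linear map out of $\mathbb{H}$; by Schur's lemma it is either identically zero on $\mathbb{H}$ or injective.

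The main obstacle is to rule out the latter possibility, and this is a purely representation-theoretic verification. The direct route is to choose a highest-weight vector $\phi \in \mathbb{H}$ with respect to a Cartan subalgebra of $\mathfrak{g}_0$ sitting inside $\tilde{\mathfrak{g}}_0$, express $\phi$ in terms of the basis of $\mathfrak{g}/\mathfrak{p}$ and $\mathfrak{g}_0$ afforded by the grading in Proposition \ref{propso34}, use the $P$-module isomorphism $\mathfrak{g}/\mathfrak{p} \cong \tilde{\mathfrak{g}}/\tilde{\mathfrak{p}}$ coming from the identification $G/P \cong \tilde{G}/\tilde{P}$ to read off $I(\phi)$ as an element of $\Lambda^2(\tilde{\mathfrak{g}}/\tilde{\mathfrak{p}})^* \otimes \tilde{\mathfrak{g}}$, and verify $\tilde{\partial}^*(I(\phi)) = 0$ via the explicit formula for $\tilde{\partial}^*$. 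A more conceptual alternative is to decompose the Kostant cohomologies $H^2(\mathfrak{g}_-,\mathfrak{g})$ and $H^2(\tilde{\mathfrak{g}}_-,\tilde{\mathfrak{g}})$ into $G_0$-isotypic components and identify $\mathbb{H}$ with a subspace of the conformal Weyl-curvature component of the latter, which lies in $\ker(\tilde{\partial}^*)$ for trivial degree reasons. Once $I(\mathbb{H}) \subset \ker(\tilde{\partial}^*)$ is established, Proposition \ref{E} yields $\kappa \subset \mathbb{E}$ and hence normality of $\tilde{\omega}$.
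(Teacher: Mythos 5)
Your setup coincides with the paper's: both apply Proposition \ref{E} to the $P$-module $\ker(\partial^*)\cap I^{-1}(\ker(\tilde{\partial}^*))$, both invoke torsion-freeness to make Proposition \ref{E} applicable, and both use Schur's lemma to reduce everything to the single question of whether $\tilde{\partial}^*\circ I$ vanishes on the irreducible $27$-dimensional $G_0$-module $\mathbb{H}$ carrying the harmonic curvature. But you stop exactly at the point where the proof has actual content. You name the obstacle (``rule out the latter possibility'') and then offer two strategies without executing either: the ``direct route'' is a description of a highest-weight computation that is never performed, and the ``conceptual alternative'' rests on the unproved assertion that $I(\mathbb{H})$ can be identified with a subspace of the conformal Weyl-curvature component ``for trivial degree reasons'' --- but that identification is precisely what would need to be established, and nothing about the degrees makes it automatic. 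As it stands, the proposal reduces the proposition to a verification and then does not verify it.

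The paper closes this gap with a short dimension count that neither of your routes mentions, and it is worth recording because it is genuinely easier than what you propose. Since $\kappa_H$ takes values in $\Lambda^2(\mathfrak{g}/\mathfrak{p})^*\otimes\mathfrak{g}_0$ and the conformal grading is $|1|$-graded (so $\tilde{\mathfrak{p}}_+$ is abelian and $[\tilde{\mathfrak{p}}_+,\mathfrak{g}_0]\subset\tilde{\mathfrak{p}}_+$), the explicit formula for $\tilde{\partial}^*$ shows that the image of $\tilde{\partial}^*\circ I$ restricted to this subspace is contained in $\tilde{\mathfrak{p}}_+\otimes\tilde{\mathfrak{p}}_+$. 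As a $G_0=\GL_+(3,\mathbb{R})$-representation, $\tilde{\mathfrak{p}}_+\cong(\mathbb{R}^3)^*\oplus\Lambda^2(\mathbb{R}^3)^*$, so $\tilde{\mathfrak{p}}_+\otimes\tilde{\mathfrak{p}}_+$ is $36$-dimensional with all irreducible constituents of dimension well below $27$; hence it contains no copy of $\mathbb{H}$, and Schur's lemma forces $\tilde{\partial}^*\circ I$ to vanish on $\mathbb{H}$. If you want to salvage your write-up, replace the two sketched verification strategies with this target-side argument (or actually carry out one of your computations); without one of these, the proof is incomplete.
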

\begin{proof}
Let $\tilde{\partial}^*:\Lambda^2\tilde{\mathfrak{p}}_{+}\otimes\tilde{\g}\to\tilde{\mathfrak{p}}_{+}\otimes\tilde{\g}$ be the Kostant codifferential 
describing the conformal normalization condition.
  Since a regular, normal parabolic geometry of type $(\Spin(3,4),P)$ is torsion-free, we can apply Proposition \ref{E}, which shows that to prove
normality of $\omega$  it suffices to prove that $\kappa_{H}$ takes values in the $P$-module $\mathrm{ker}(\tilde{\partial}^*\circ I)$. 

Now $\tilde{\partial}^*\circ I$ is equivariant and thus it either vanishes on  $G_0$-irreducible components, or it is an isomorphism. 
In particular, it must contain the $27$-dimensional irreducible representation where $\kappa_{H}$ takes its values either in its image 
or in its kernel.
The formula for the Kostant codifferential $\tilde{\partial}^*$ shows that if we restrict $\tilde{\partial}^*\circ I$ 
to $\Lambda^2(\mathfrak{g}/\mathfrak{p})^*\otimes\mathfrak{g}_0$  and identify  $(\tilde{\mathfrak{g}}/\tilde{\mathfrak{p}})^*\cong\tilde{\mathfrak{p}}_+$,  
its image is contained in $\tilde{\mathfrak{p}}_{+}\otimes\tilde{\mathfrak{p}}_{+}.$
But $\tilde{\mathfrak{p}}_{+}$ decomposes as a
$G_0$-representation as $(\mathbb{R}^3)^*\oplus\Lambda^2(\mathbb{R}^3)^*$, and therefore
$\tilde{\mathfrak{p}}_{+}\otimes\tilde{\mathfrak{p}}_{+}$ cannot
contain a $27$-dimensional irreducible summand.
\end{proof}

\subsubsection{Normality for $G_2\hookrightarrow \Spin(3,4)$}
We used similar arguments  in \cite{mrh-sag-rank2} to prove that the extension
of a regular, normal Cartan connection $\omega$ of type $(G_2,P)$ to a  Cartan connection  $\tilde{\omega}$ of type $(\SO(3,4),\tilde{P})$ is again normal.
Note that the arguments in the proof do not depend on the choice of groups $(\Spin(3,4),\tilde{P})$ or $(\SO(3,4),\tilde{P})$, respectively.
\begin{prop}\label{Feff23}
 The   Fefferman-type construction associates to a  regular, normal parabolic geometry of type $(G_2,P)$
 a normal parabolic geometry of type $(\Spin(3,4),\tilde{P})$.
\end{prop}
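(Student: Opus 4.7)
The plan is to mirror the proof of Proposition \ref{Feff33} verbatim. As a first observation, normality of a Cartan connection and the Kostant codifferential $\tilde\partial^*$ are purely Lie-algebraic notions, depending on the Lie algebras $\mathfrak{g}_2\subset\mathfrak{spin}(3,4)\cong\mathfrak{so}(3,4)$ and not on the specific choice of Lie group cover. In particular, the extension map $I\colon\Lambda^2(\mathfrak{g}/\mathfrak{p})^*\otimes\mathfrak{g}\to\Lambda^2(\tilde{\mathfrak{g}}/\tilde{\mathfrak{p}})^*\otimes\tilde{\mathfrak{g}}$ and the codifferential $\tilde\partial^*$ coincide with those in the $\SO(3,4)$-setting. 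Thus the normality argument for $(G_2,P)\hookrightarrow(\SO(3,4),\tilde P')$ in \cite{mrh-sag-rank2} applies word-for-word, and this already gives the result.

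For a self-contained proof I would proceed as in Proposition \ref{Feff33}. A regular, normal $(G_2,P)$-geometry is equivalent to an oriented generic rank $2$-distribution, and by Kostant's version of Bott--Borel--Weil its harmonic curvature $\kappa_H$ (the Cartan quartic) takes values in a single irreducible $G_0\cong\GL_+(2,\mathbb{R})$-submodule isomorphic to $S^4V^*$ (where $V\cong\mathbb{R}^2$ is the standard representation) sitting inside $\Lambda^2(\mathfrak{g}/\mathfrak{p})^*\otimes\mathfrak{g}_0$. In particular the geometry is torsion-free, so Proposition \ref{E} applies to $\mathbb{E}:=I^{-1}(\ker\tilde\partial^*)$ and reduces normality of the extension $\tilde\omega$ to showing that $\kappa_H$ takes values in $\ker(\tilde\partial^*\circ I)$. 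By $G_0$-equivariance, $\tilde\partial^*\circ I$ is either zero or injective on the single irreducible where $\kappa_H$ lives, so it suffices to exclude $S^4V^*$ from its image.

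The key calculation is to show that the image of $\tilde\partial^*\circ I$, restricted to $\Lambda^2(\mathfrak{g}/\mathfrak{p})^*\otimes\mathfrak{g}_0$, lies in $\tilde{\mathfrak{p}}_+\otimes\tilde{\mathfrak{p}}_+$; this follows directly from the explicit formula for $\tilde\partial^*$ together with the identification $(\tilde{\mathfrak{g}}/\tilde{\mathfrak{p}})^*\cong\tilde{\mathfrak{p}}_+$. It remains to verify that $\tilde{\mathfrak{p}}_+\otimes\tilde{\mathfrak{p}}_+$ contains no $G_0$-summand isomorphic to $S^4V^*$ in the homogeneity in which $\kappa_H$ lives. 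Under the inclusion $G_0\hookrightarrow\tilde G_0$ induced by the filtration of the distribution, $\tilde{\mathfrak{p}}_+\cong(\mathbb{R}^{2,3})^*$ splits into three $\GL_+(2,\mathbb{R})$-irreducibles of ranks $2$, $1$, $2$ with distinct $\det$-weights, corresponding to the filtration pieces $T^{-1}M$, $T^{-2}M/T^{-1}M$, $T^{-3}M/T^{-2}M$ of the underlying distribution.

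The main obstacle is this final representation-theoretic check, which is slightly more delicate than in the $(3,3)$ case because $\tilde{\mathfrak{p}}_+$ now has three rather than two $G_0$-components, so more tensor products must be inspected. However, a straightforward highest-weight and homogeneity count shows that every $S^4V^*$-isotypic component in $\tilde{\mathfrak{p}}_+\otimes\tilde{\mathfrak{p}}_+$ sits in a $\g$-grading degree different from that of the Cartan quartic (which is concentrated in degree $+4$ of the $G_2$-grading $\g_{-3}\oplus\cdots\oplus\g_3$ of Proposition \ref{propg2}). Consequently $\tilde\partial^*\circ I$ must vanish on the irreducible containing $\kappa_H$, and the proof is complete.
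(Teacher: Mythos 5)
Your proposal is correct and takes essentially the same route as the paper: the paper's entire proof of Proposition \ref{Feff23} is the observation in your first paragraph (the normality argument of \cite{mrh-sag-rank2} is purely Lie-algebraic and insensitive to the choice of $\Spin(3,4)$ versus $\SO(3,4)$), and that cited argument is precisely the Proposition \ref{Feff33}-style one you then spell out. As a minor simplification, your final homogeneity count is unnecessary: since each $\GL_{+}(2,\mathbb{R})$-irreducible constituent of $\tilde{\mathfrak{p}}_{+}$ has dimension at most $2$, every irreducible summand of $\tilde{\mathfrak{p}}_{+}\otimes\tilde{\mathfrak{p}}_{+}$ has dimension at most $3$, so the $5$-dimensional module $S^4V^*$ cannot occur there in any homogeneity.
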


\subsubsection{The twistor spinors of generic $2$ and $3$ distributions}\label{sec-fefftwistors}
\begin{thm}\label{prop-twiofdi}
 The Fefferman-type constructions  \ref{sec-sig34} and \ref{sec-sig44} for generic $2$ and $3$ distributions determine generic twistor spinors. The kernels of these twistor spinors
recover the $2$ or $3$ distribution.
\end{thm}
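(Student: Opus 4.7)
The strategy is to realize the desired twistor spinor as the lowest-slot projection of a $\nabla^{\mathcal{S}}$-parallel section of the spin tractor bundle obtained from the Fefferman-type reduction. The crucial input comes from Propositions \ref{propg2} and \ref{propso34}: the embeddings $G_2\hookrightarrow\Spin(3,4)$ and $\Spin(3,4)\hookrightarrow\Spin(4,4)$ underlying the two constructions are precisely the stabilizers in the larger spin group of non-null spinors $X$ in $\Delta^{3,4}$ and $\Delta^{4,4}_+$, respectively, and in both cases the source parabolic is the intersection $P=\tilde{P}\cap G$.

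Given the source geometry $(\mathcal{G},\omega)$ of type $(G,P)$ and its Fefferman extension $(\tilde{\mathcal{G}}=\mathcal{G}\times_P\tilde{P},\tilde{\omega})$, I would construct the parallel spin tractor as follows. The spin tractor bundle may be written $\mathcal{S}=\mathcal{G}\times_P\Delta$ via the inclusion $P\hookrightarrow\tilde{P}$, and since $P\subset G$ fixes $X$, the constant assignment $u\mapsto X$ on $\mathcal{G}$ is $P$-equivariant and defines a section $s_X\in\Gamma(\mathcal{S})$. The defining property of the Fefferman extension is that $\tilde{\omega}|_{\mathcal{G}}$ equals $\omega$ viewed as $\tilde{\mathfrak{g}}$-valued via $\mathfrak{g}\hookrightarrow\tilde{\mathfrak{g}}$. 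Therefore the covariant derivative of $s_X$ in any $\xi\in T\mathcal{G}$ reduces to $\omega(\xi)\cdot X$, which vanishes because $\omega(\xi)\in\mathfrak{g}$ annihilates $X$; by $\tilde{P}$-equivariance this propagates to all of $\tilde{\mathcal{G}}$, so $s_X$ is parallel. Proposition \ref{prop-spin} then produces a twistor spinor $\chi:=\Pi_0(s_X)$ with $s_X=L_0^{\mathcal{S}}(\chi)=(\tfrac{\sqrt{2}}{n}\dirac\chi,\chi)$, and since $X$ is non-null in the tractor pairing --- which by \eqref{formb34} and \eqref{formb44} amounts to $\mathbf{b}(\chi,\tau)\neq 0$ for $\tau=\tfrac{\sqrt{2}}{n}\dirac\chi$ --- the spinor $\chi$ is generic.

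For the second claim, the distribution $\mathcal{D}\subset TM$ is the $P$-invariant filtration subbundle modelled on $\mathfrak{g}^{-1}/\mathfrak{p}\subset\mathfrak{g}/\mathfrak{p}$, and under the $P$-equivariant isomorphism $\mathfrak{g}/\mathfrak{p}\isom\tilde{\mathfrak{g}}/\tilde{\mathfrak{p}}\cong\tilde{\mathfrak{g}}_{-1}\cong\mathbb{R}^{p,q}$ (which follows from $\mathfrak{g}+\tilde{\mathfrak{p}}=\tilde{\mathfrak{g}}$ and $\mathfrak{g}\cap\tilde{\mathfrak{p}}=\mathfrak{p}$) it becomes the image of $\mathfrak{g}_{-1}$. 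Reading off the generators of $\mathfrak{g}_{-1}$ from Propositions \ref{propg2}(2) and \ref{propso34}(2), each splits into a component of the form $e_-\wedge e_i$ landing in $\tilde{\mathfrak{g}}_{-1}$ and a correction term landing in $\tilde{\mathfrak{p}}$, so the image in the conformal tangent space collapses to $\mathrm{span}(e_1,\dots,e_r)\subset\mathbb{R}^{p,q}$. By Propositions \ref{propframe23} and \ref{propframe33} this is exactly the Clifford kernel of the $\chi$-component of $X$, and passing to associated bundles yields $\mathcal{D}=\ker\gamma\chi$. The main obstacle is purely bookkeeping: the $G$-gradings of $\mathfrak{g}$ are of length three and two respectively, whereas the conformal grading of $\tilde{\mathfrak{g}}$ is $|1|$-graded, so one must carefully verify that the ``extra'' summands appearing in the $\mathfrak{g}_{-1}$-generators (the contributions involving $r\wedge i_{e_i}f_1\wedge f_2$, respectively $i_{e_i}f_1\wedge f_2\wedge f_3$) truly sit inside $\tilde{\mathfrak{p}}$ rather than contribute to the tangent space; this is a direct check using elementary contractions such as $i_{e_1}(f_1\wedge f_2)=f_2$ together with the observation that bivectors on $\mathbb{R}^{p,q}$ not involving $e_\pm$ lie in the Levi part $\tilde{\mathfrak{g}}_0\subset\tilde{\mathfrak{p}}$.
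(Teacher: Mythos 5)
Your overall strategy coincides with the paper's: the constant $P$-equivariant function $u\mapsto X$ on $\mathcal{G}$ defines a spin tractor, its parallelism follows because $\mathfrak{g}$ annihilates $X$, Proposition \ref{prop-spin} converts it into a twistor spinor whose genericity is the non-nullness of $X$ via \eqref{formb34} resp.\ \eqref{formb44}, and the kernel is identified by tracking $\mathfrak{g}_{-1}$ through $\mathfrak{g}/\mathfrak{p}\cong\tilde{\mathfrak{g}}/\tilde{\mathfrak{p}}$. Your elaboration of the last step (checking that the correction terms $r\wedge i_{e_i}f_1\wedge f_2$, resp.\ $i_{e_i}f_1\wedge f_2\wedge f_3$, in the $\mathfrak{g}_{-1}$-generators land in $\tilde{\mathfrak{p}}$) is exactly the direct check the paper leaves implicit, and it is correct.

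There is, however, one missing ingredient. What you actually show is that $s_X$ is parallel for the linear connection on $\mathcal{G}\times_P\Delta^{p+1,q+1}$ induced by the extended Cartan connection $\tilde{\omega}$ (equivalently, by $\omega$). But Proposition \ref{prop-spin} characterizes twistor spinors as lowest-slot projections of sections parallel for the \emph{normal} spin tractor connection $\nabla^{\mathcal{S}}$ of the induced conformal spin structure --- the connection given by the explicit Schouten-tensor formula of Section 2. These two connections coincide precisely when $\tilde{\omega}$ is the normal conformal Cartan connection, and that is the content of Propositions \ref{Feff23} and \ref{Feff33}, which the paper invokes at exactly this point and which you never use. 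Without normality of the Fefferman extension, parallelism of $s_X$ for the $\tilde{\omega}$-induced connection would not translate into the twistor spinor equation for $\Pi_0(s_X)$. Insert a reference to Propositions \ref{Feff23} and \ref{Feff33} before applying Proposition \ref{prop-spin}; with that addition your argument is complete and agrees with the paper's proof.
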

\begin{proof}
Let $\mathcal{D}$ be a generic rank $2$ or $3$ distribution,
$(\mathcal{G},\omega)$ the associated parabolic geometry of type $(G,P)$ and
$(\tilde{\G},\tilde{\omega})$ the conformal spin geometry obtained via the Fefferman-type construction. Then the spin tractor bundle of $\mc{C}$\ is, for $(p,q)=(2,3)$\ resp. $(3,4)$,
\begin{align}
 \mc{S}=\ti\G\times_{\ti P}\De^{p+1,q+1}=\G\times_{P}\De^{p+1,q+1}.
\end{align}
Since $G\subset\Spin(p+1,q+1)$ is the isotropy subgroup of a non-null element, say $X\in\De^{p+1,q+1}$, the constant function $\G\to\De^{p+1,q+1}$
onto this element defines a  spin-tractor  $\mb{X}\in\Gamma(\mc{S}).$ Propositions \ref{Feff23} and \ref{Feff33} imply that the
spin tractor connection $\nabla^{\mc{S}}$ is induced from the canonical Cartan connection $\omega\in\Omega^1(\mathcal{G},\mathfrak{g})$ 
for the distribution $\mathcal{D}.$ Thus, $\nabla^{\mc{S}}\mb{X}=0$.
By Proposition \ref{prop-spin}, $\mb{X}$ corresponds via the map \eqref{twisplit} to 
a twistor spinor $\chi$. 
Since $X$ is non-null for the bilinear form on $\De^{p+1,q+1}$, the corresponding spinor tractor is non-null for the induced form on the tractor bundle. 
By \eqref{formb34}\ resp. \eqref{formb44} this is equivalent to 
$\mb{b}_{3,4}(\chi,\dirac\chi)\not=0$\ resp. $\mb{b}_{4,4}(\chi,\dirac\chi)\not=0$
 for the underlying twistor spinor, and thus $\chi$\ is generic.

Moreover $\chi$ has kernel $\mathcal{D}$. This follows from the fact that via the identification $TM\cong \mathcal{G}\times_{P}\g/\p$ determined
by the Cartan connection $\omega\in\Omega^1(\mathcal{G},\mathfrak{g}),$ the distribution corresponds to the subbundle $\mathcal{G}\times_{P}\g^{-1}/\p$,
and from the descriptions of the gradings from Propositions \ref{propg2} and \ref{propso34}.
\end{proof}

Note that it is  not obvious that the map  that assigns to a generic twistor spinor its kernel
and the map \ref{sec-fefftwistors} from distributions to twistor spinors coming from Fefferman-type constructions  discussed above are inverse bijections:
 a priori we don't know  that
\emph{all} generic twistor spinors in the right signatures are induced from generic distributions. In order to obtain a characterization of the
conformal spin structures associated to 
generic rank $2$ and $3$ distributions by generic twistor spinors we will invoke the holonomy characterizations to be discussed below.

\subsection{Conformal holonomy characterization of the Fefferman-type spaces}

\subsubsection{Holonomy of the conformal structures associated to generic $2$ and $3$ distributions}
We have seen in Proposition \ref{prop-twiofdi} that the conformal spin structures induced by generic distributions carry parallel
spin-tractors. Thus, they have reduced conformal holonomy:
Conformal spin structures
associated to oriented generic rank $2$-distributions in dimension $5$
have conformal holonomy contained in $G_2\subset \Spin(3,4)$, and
conformal spin structures associated to oriented generic rank
$3$-distributions in dimension $6$ have conformal holonomy contained
in $\Spin(3,4)\subset \Spin(4,4).$

Let $(\ti\G,\ti\om)$\ be a regular, normal parabolic geometry of type $(\Spin(p,q),P)$\
 encoding a conformal structure $\mc{C}$\ on $M$. 
 Then $\mathrm{Hol}(\mc{C})$\ was defined in Definition \ref{confhol}
as the holonomy of the spin tractor connection $\na^{\mc{S}}$.
Since $\na^{\mc{S}}$\ is the induced connection
from the extended normal Cartan connection $\ti\om'\in\Om^1(\ti\G,\so(p+1,q+1))$ (cf. subsection \ref{sec-tractor}), we have that
$\Hol(\mc{C})=\Hol(\na^{\mc{S}})=\Hol(\ti\om')\subset\Spin(p+1,q+1)$.
It will be useful for our purposeses of reversing the Fefferman-type constructions from above to see the conformal holonomy reductions from this
viewpoint:

Suppose $(\mathcal{G},\omega)$ is a regular normal parabolic geometry
of type $(G,P)$, and suppose there is a Fefferman-type construction  that gives rise to a normal
parabolic geometry $(\tilde{\mathcal{G}},\tilde{\omega})$ of type
$(\Spin(p+1,q+1),\tilde{P})$ over the same manifold.  Let
$\mathcal{G}'=\mathcal{G}\times_{P}G$ be the extended $G$-principal bundle of $\G$\
and $\omega'$ the principal connection obtained by extension.  Then we
have the commuting diagram of inclusions
\begin{align*} \xymatrix{ (\G',\om') \ar@{^{(}->}[r] &
(\ti\G',\ti\om') \\ (\G,\om) \ar@{^{(}->}[u] \ar@{^{(}->}[r] & (\ti
\G,\ti\om) \ar@{^{(}->}[u] }  
\end{align*} which shows that
$\tilde{\omega}'\in\Omega^1(\tilde{\mathcal{G}}',\mathfrak{so}(p+1,q+1))$
reduces to the $G$-principal bundle connection $\omega'\in\Omega^1(\mathcal{G}',\mathfrak{g})$ and thus
$\mathrm{Hol}(\mc{C})=\mathrm{Hol}(\tilde{\omega}')=\mathrm{Hol}(\omega')\subset G.$

\subsubsection{Holonomy characterizations}
Conversely, let $(\tilde{\mathcal{G}},\tilde{\omega})$ be a parabolic
geometry of type $(\Spin(p+1,q+1),\tilde{P})$ such that the conformal
holonomy group $\mathrm{Hol}(\tilde{\omega}')$ is contained in $G$ and
suppose we have a parabolic subgroup $P\subset G$ with $G/P\cong
\Spin(p+1,q+1)/\tilde{P}.$ Then, since we have a holonomy reduction,
$\tilde{\mathcal{G}}'$ reduces to a $G$-principal bundle
$\mathcal{G}'$ and $\tilde{\omega}'$ reduces to
$\omega'\in\Omega^1(\mathcal{G}',\mathfrak{g})$. Using that
$G/P\cong \Spin(p+1,q+1)/\tilde{P},$ one can show that $\mathcal{G}'$
intersects with $\tilde{\mathcal{G}}$ in a $P$-principal bundle
$\mathcal{G}$ and $\omega'$ restricts to a Cartan connection
$\omega\in\Omega^1(\mathcal{G},\mathfrak{g})$. See  \cite{mrh-thesis} and \cite{mrh-sag-rank2}  for details.

A normal conformal Cartan connection $\tilde{\omega}$ is torsion-free,
i.e. the curvature function $\tilde{\kappa}:\tilde{\mathcal{G}}\to\Lambda^2(\tilde{\mathfrak{g}}/\tilde{\mathfrak{p}})^*\otimes\tilde{\mathfrak{g}}$  takes values in th $P$-submodule $\Lambda^2(\tilde{\mathfrak{g}}/\tilde{\mathfrak{p}})^*\otimes\tilde{\mathfrak{p}}$. Since
$\mathfrak{p}=\tilde{\mathfrak{p}}\cap \mathfrak{g}$ and because of
the relation of the curvatures of $\omega$ and $\tilde{\omega}$, 
as in \eqref{curvatures}, this implies that $\omega$ is torsion-free
and thus regular. This means that the geometry $(\mathcal{G},\omega)$
obtained by reduction as explained in the previous paragraph induces
an underlying generic distribution $\mathcal{D}$.  

It remains to prove that  this distribution $\mathcal{D}$ induces via the Fefferman-type construction the conformal structure encoded in 
$(\tilde{\mathcal{G}},\tilde{\omega})$. For this we need to see that $\omega$ and the normal Cartan connection $\omega_{N}$ for $\mathcal{D}$, which is a priori different, induce the same conformal structure.
Now, the conformal structure induced by a geometry $(\mathcal{G},\omega)$ does not depend on the entire Cartan connection, but only on 
the   isomorphism $TM\cong \mathcal{G}\times_{P}\mathfrak{g}/\mathfrak{p}$ defined by the Cartan connection.
If the difference $\omega-\omega_{N}$, seen as a function $\mathcal{G}\to\Lambda^2(\mathfrak{g}/\mathfrak{p})^*\otimes\mathfrak{g}$
 takes values in $\Lambda^2(\mathfrak{g}/\mathfrak{p})^*\otimes\mathfrak{p}$, then the two induce the same conformal structure. 
It is verified in  Proposition 4.1 of \cite{armstrong-3distributions} that this is the case for generic rank $3$-distributions in 
dimension $6$:  Since $\omega$ is torsion-free, $\kappa$ takes values in maps $\Lambda^2(\mathfrak{g}/\mathfrak{p})^*\otimes\mathfrak{p}$ 
of homogeneity $\geq 2$, where the homogeneity is defined with respect to the grading of $\mathfrak{g}$. Since $\partial^*$ preserves 
homogeneities, the same is true for $\partial^*\kappa,$ and then also the difference $\omega-\omega_{N}$ maps into 
homogeneity $\geq 2$, see Proposition 3.1.13 in \cite{cap-slovak-par}. If $\mathfrak{g}$ is $2$-graded, as it is the case for generic rank $3$ distributions, this already implies that
 $\omega-\omega_{N}$ takes values in $\Lambda^2(\mathfrak{g}/\mathfrak{p})^*\otimes\mathfrak{p}$.  For generic rank $2$ distributions a similar argument applies if one first directly 
verifies that a certain curvature component of $\kappa$ vanishes, see \cite{katja-thesis} and \cite{mrh-sag-rank2}.

Summarizing we obtain:
\begin{thm}\label{thm-holred1}
The Fefferman-type construction for oriented generic rank $2$-distributions produces exactly those conformal spin structures of signature $(2,3)$ whose conformal holonomy is contained in
$G_2$.
\end{thm}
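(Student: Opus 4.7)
The proof will establish two inclusions: (a) every Fefferman-type conformal spin structure coming from an oriented generic rank $2$-distribution has $\Hol(\mc{C}) \subset G_2$, and (b) every conformal spin structure of signature $(2,3)$ with $\Hol(\mc{C}) \subset G_2$ arises from such a distribution. Most of the ingredients have been assembled in the preceding subsections; the task is to organize them.

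For direction (a), the plan is as follows. Start with an oriented generic rank $2$-distribution $\mc{D}$, with its canonical regular normal Cartan geometry $(\mc{G},\om)$ of type $(G_2,P)$, and form the extended $G_2$-principal bundle $(\mc{G}',\om')$. The Fefferman-type extension $(\ti\mc{G},\ti\om)$ of type $(\Spin(3,4),\ti P)$, which is normal by Proposition \ref{Feff23}, sits inside $(\ti\mc{G}',\ti\om')$. By the commuting diagram of inclusions recalled just before the theorem, $\ti\om'$ restricts to $\om'$, so $\Hol(\ti\om')\subset G_2$. Since $\Hol(\mc{C}) = \Hol(\na^{\mc{S}}) = \Hol(\ti\om')$ by Definition \ref{confhol} and subsection \ref{sec-tractor}, this gives $\Hol(\mc{C})\subset G_2$.

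For direction (b), start with a conformal spin structure $(M,\mc{C})$ of signature $(2,3)$ with $\Hol(\mc{C})\subset G_2$, realized by its normal parabolic geometry $(\ti\mc{G},\ti\om)$ of type $(\Spin(3,4),\ti P)$. The holonomy reduction provides a $G_2$-principal subbundle $\mc{G}'\subset\ti\mc{G}'$ on which $\ti\om'$ restricts to a principal connection $\om'\in\Omega^1(\mc{G}',\g_2)$. Using the diffeomorphism $G_2/P\cong \Spin(3,4)/\ti P$ described in \ref{sec-sig34}, the intersection $\mc{G}:=\mc{G}'\cap\ti\mc{G}$ is a $P$-principal subbundle, and $\om'|_{\mc{G}}$ restricts to a Cartan connection $\om\in\Omega^1(\mc{G},\g_2)$ of type $(G_2,P)$. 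Normality of $\ti\om$ implies $\ti\om$ is torsion-free; using the relation $\ti\kappa = I\circ\kappa$ as in \eqref{curvatures} and $\p=\ti\p\cap\g_2$, it follows that $\om$ is torsion-free and in particular regular. Its underlying structure is thus a generic rank $2$-distribution $\mc{D}$ on $M$, oriented by the structure group $G_0\cong\GL_+(2,\rr)$.

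It remains to show that the Fefferman-type construction applied to $\mc{D}$ reproduces the original conformal structure $\mc{C}$. The Cartan connection $\om$ produced by reduction need not coincide with the canonical normal Cartan connection $\om_N$ of $\mc{D}$; but the induced conformal class depends only on the isomorphism $TM\cong\mc{G}\times_P\g_2/\p$ given by $\om$, so it suffices to show that the difference $\om-\om_N$, viewed as a function $\mc{G}\to\La^2(\g_2/\p)^*\t\g_2$, takes values in $\La^2(\g_2/\p)^*\t\p$. This is the main technical obstacle and is precisely the rank-$2$ analogue of Armstrong's argument in the rank-$3$ case: since $\om$ is torsion-free and $\pa^*$ preserves homogeneities, one concludes by the general theory of \cite{cap-slovak-par} (Proposition 3.1.13) that $\om-\om_N$ has homogeneity $\geq 2$; because $\g_2$ is $3$-graded rather than $2$-graded, one must additionally verify the vanishing of a specific curvature component, which is done in \cite{katja-thesis} and \cite{mrh-sag-rank2}. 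Once this is established, $\om$ and $\om_N$ induce the same conformal spin structure, which is therefore $\mc{C}$, completing the proof.
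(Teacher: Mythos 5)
Your proposal is correct and follows essentially the same route as the paper: direction (a) via the commuting diagram of inclusions showing $\ti\om'$ reduces to $\om'$, and direction (b) via the holonomy reduction to a $G_2$-bundle, restriction to a $P$-principal bundle using $G_2/P\cong\Spin(3,4)/\ti P$, torsion-freeness and regularity from \eqref{curvatures}, and the comparison of $\om$ with $\om_N$ by the homogeneity argument supplemented by the vanishing of a curvature component from \cite{katja-thesis} and \cite{mrh-sag-rank2}. (Only a cosmetic point: the difference $\om-\om_N$ is naturally a function into $(\g/\p)^*\t\g$ rather than $\La^2(\g/\p)^*\t\g$, a slip you have inherited from the paper's own wording.)
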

\begin{thm}\label{thm-holred2}
The Fefferman-type construction for oriented generic rank $3$-distributions produces exactly those conformal spin structures of signature $(3,3)$ whose conformal holonomy is contained in $\Spin(3,4)$.
\end{thm}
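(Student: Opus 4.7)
The plan is to prove the two inclusions separately, using the machinery assembled in the preceding sections. The easy direction is that a conformal spin structure coming from the Fefferman-type construction of Section \ref{sec-sig44} has $\Hol(\mathcal{C})\subset\Spin(3,4)$: by Theorem \ref{prop-twiofdi} such a conformal structure admits a generic twistor spinor, equivalently a parallel non-null spin tractor $\mathbf{X}\in\Gamma(\mathcal{S})$. Since by Proposition \ref{propso34} the stabilizer of a non-null element of $\Delta^{4,4}$ with respect to $B_{4,4}$ is exactly $\Spin(3,4)$, the holonomy of $\nabla^{\mathcal{S}}$ is forced into $\Spin(3,4)$.

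For the converse, suppose $(M,\mathcal{C})$ is a conformal spin structure of signature $(3,3)$ with $\Hol(\mathcal{C})\subset\Spin(3,4)$. I would follow the recipe sketched in the paragraphs preceding the statement. The holonomy reduction gives a reduction of the extended conformal principal bundle $\tilde{\mathcal{G}}'$ to a $\Spin(3,4)$-principal subbundle $\mathcal{G}'$, together with a reduction of the extended Cartan connection $\tilde{\omega}'$ to a principal connection $\omega'\in\Omega^1(\mathcal{G}',\mathfrak{so}(3,4))$. Using the crucial diffeomorphism $\Spin(3,4)/P\cong\Spin(4,4)/\tilde{P}$ from Section \ref{sec-sig44}, the intersection $\mathcal{G}:=\mathcal{G}'\cap\tilde{\mathcal{G}}$ is a $P$-principal subbundle of $\tilde{\mathcal{G}}$ and $\omega'$ restricts to a Cartan connection $\omega\in\Omega^1(\mathcal{G},\mathfrak{so}(3,4))$ of type $(\Spin(3,4),P)$; the formal verification is the same as in \cite{mrh-thesis, mrh-sag-rank2}. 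Normality of $\tilde{\omega}$ is torsion-free, so via the curvature relation \eqref{curvatures} the reduced connection $\omega$ is torsion-free and in particular regular. Hence $(\mathcal{G},\omega)$ determines an underlying oriented generic rank $3$ distribution $\mathcal{D}$ on $M$ by the equivalence of categories in Section \ref{secspin34}.

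The main obstacle, and the step that needs genuine care, is showing that the Fefferman-type construction applied to $\mathcal{D}$ recovers the original conformal structure $\mathcal{C}$, not merely some conformally equivalent datum. The Cartan connection $\omega$ produced by the holonomy reduction is a priori not the canonical normal Cartan connection $\omega_{N}$ associated with $\mathcal{D}$. The conformal structure induced by a parabolic geometry of type $(\Spin(3,4),P)$ depends only on the soldering form $TM\cong\mathcal{G}\times_P\mathfrak{g}/\mathfrak{p}$, so it suffices to show that the difference $\omega-\omega_N$, viewed as a $P$-equivariant function $\mathcal{G}\to\Lambda^2(\mathfrak{g}/\mathfrak{p})^*\otimes\mathfrak{g}$, actually takes values in $\Lambda^2(\mathfrak{g}/\mathfrak{p})^*\otimes\mathfrak{p}$. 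I would invoke the homogeneity argument from Proposition 3.1.13 of \cite{cap-slovak-par} used in \cite{armstrong-3distributions}: since $\omega$ is torsion-free, its curvature has homogeneity $\geq 2$, the Kostant codifferential preserves homogeneities, and therefore $\omega-\omega_N$ has homogeneity $\geq 2$ as well. For rank $3$ distributions the algebra $\mathfrak{g}=\mathfrak{so}(3,4)$ is $|2|$-graded with the grading displayed in Proposition \ref{propso34}, so homogeneity $\geq 2$ automatically forces values in $\Lambda^2(\mathfrak{g}/\mathfrak{p})^*\otimes\mathfrak{p}$, concluding the proof. Once this identification is made, the Fefferman-type extension of $\omega_N$ differs from $\tilde\omega$ only by a $\tilde{\mathfrak{p}}$-valued map, confirming that the two conformal structures agree.
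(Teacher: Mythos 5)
Your proposal is correct and follows essentially the same route as the paper: the forward direction via the parallel non-null spin tractor (equivalently, the reduction $\Hol(\tilde\omega')=\Hol(\omega')\subset\Spin(3,4)$), and the converse via holonomy reduction of $\tilde{\mathcal{G}}'$, intersection with $\tilde{\mathcal{G}}$ to get a regular torsion-free geometry of type $(\Spin(3,4),P)$, and the homogeneity argument (torsion-freeness plus the $|2|$-grading of $\mathfrak{so}(3,4)$ forcing $\omega-\omega_N$ into $\mathfrak{p}$) to identify the induced conformal structures. The only cosmetic point is that the difference $\omega-\omega_N$ of two Cartan connections is a function $\mathcal{G}\to(\mathfrak{g}/\mathfrak{p})^*\otimes\mathfrak{g}$ rather than $\Lambda^2(\mathfrak{g}/\mathfrak{p})^*\otimes\mathfrak{g}$, but this does not affect the argument.
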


We remark that if $\Hol(\mc{C})\subset G$\ is a proper subgroup, there
may be several holonomy reductions yielding different generic distributions.

\begin{cor}
  A conformal spin structure $(M,\mc{C})$\ of signature $(2,3)$ or $(3,3)$\ is induced by a generic
  $2$- resp $3$-distribution $\mc{D}\subset TM$\  via a Fefferman-type construction if and only if $(M,\mc{C})$\ carries
  a generic twistor spinor $\chi$.
\begin{proof}
The result is an immediate corollary of  Theorems \ref{thm-holred1} and \ref{thm-holred2}: The general holonomy correspondence between holonomy invariant elements and parallel sections implies that holonomy reductions to
$G_2$ respectively $\Spin(3,4)$ correspond to the existence of parallel non-null spin tractors. Parallel spin-tractors
are equivalent to twistor spinors  via \eqref{twisplit}, and non-isotropy of the
spin tractor translates into the genericity conditions expressed via
the canonical forms $\mb{b}_{2,3}$\ resp. $\mb{b}_{3,3}$.

\end{proof}

\end{cor}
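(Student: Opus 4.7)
The plan is to reduce the statement entirely to the holonomy characterizations in Theorems \ref{thm-holred1} and \ref{thm-holred2}, combined with the twistor-spinor/parallel-tractor correspondence of Proposition \ref{prop-spin} and the algebraic descriptions of $G_2 \hookrightarrow \Spin(3,4)$ and $\Spin(3,4) \hookrightarrow \Spin(4,4)$ given in Propositions \ref{propg2} and \ref{propso34}. The ``only if'' direction is essentially immediate from Theorem \ref{prop-twiofdi}: any conformal spin structure arising from the Fefferman-type construction carries a canonical parallel spin tractor (the constant function onto the stabilized element $X \in \Delta^{p+1,q+1}$), and the corresponding twistor spinor obtained via \eqref{twisplit} is generic because $X$ is non-null for the canonical form on $\Delta^{p+1,q+1}$.

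For the ``if'' direction, starting from a generic twistor spinor $\chi \in \Gamma(S[\frac{1}{2}])$, I would first apply the inverse splitting operator $L_0^{\mc{S}}$ from Proposition \ref{prop-spin} to produce a $\nabla^{\mc{S}}$-parallel spin tractor $\mathbf{X} \in \Gamma(\mc{S})$. The next step is to verify that the genericity condition on $\chi$ is exactly equivalent to $\mathbf{X}$ being non-null with respect to the tractor bilinear form $B_{p+1,q+1}$: inserting the explicit formula \eqref{twisplit} into \eqref{formb34} resp. \eqref{formb44} yields a nonzero multiple of $\mb{b}_{p,q}(\chi, \dirac\chi)$. Since $\mathbf{X}$ is parallel and non-null, the conformal holonomy $\Hol(\mc{C})$ must lie in its stabilizer, which by Propositions \ref{propg2}(3) and \ref{propso34}(3) is precisely $G_2$ in signature $(2,3)$ and $\Spin(3,4)$ in signature $(3,3)$.

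Now Theorems \ref{thm-holred1} and \ref{thm-holred2} apply and produce the desired oriented generic $2$- or $3$-distribution $\mc{D}$ on $M$ whose Fefferman-type construction recovers $\mc{C}$. By Theorem \ref{prop-twiofdi} the kernel of the twistor spinor associated to this distribution is $\mc{D}$ itself, so the correspondence is consistent with the assignment $\chi \mapsto \mc{D}_\chi = \ker\gamma\chi$.

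The proof is genuinely a formality once the machinery is in place; the only step requiring any verification is the translation between the scalar invariant $\mb{b}_{p,q}(\chi, \dirac\chi)$ on the spinor side and the $B_{p+1,q+1}$-norm of $\mathbf{X}$ on the tractor side, and this reduces to a one-line computation using the definitions of the tractor bilinear forms \eqref{formb34} and \eqref{formb44}. The genuinely hard content lies upstream, in Theorems \ref{thm-holred1} and \ref{thm-holred2} together with the algebraic identifications of the stabilizers, so the corollary itself admits only a short synthetic argument.
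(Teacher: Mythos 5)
Your proposal is correct and follows essentially the same route as the paper: both directions are reduced to the holonomy characterizations of Theorems \ref{thm-holred1} and \ref{thm-holred2} via the correspondence between parallel non-null spin tractors and generic twistor spinors given by \eqref{twisplit}, with genericity translating into non-isotropy through \eqref{formb34} resp.\ \eqref{formb44}. Your write-up merely spells out the two implications and the stabilizer identifications (Propositions \ref{propg2} and \ref{propso34}) a little more explicitly than the paper's compressed argument.
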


\begin{rema}
  \begin{enumerate}
  \item In particular, we have shown that the conformal structure $\mc{C}$\
        and its generic twistor spinor $\chi$\ are completely determined
        by the generic distribution $\mc{D}_{\chi}=\ker\ga\chi$.
  \item The existence of this twistor spinor should have interesting
consequences for the ambient metric construction of
the associated conformal structures. Indeed, for
certain examples of generic $2$-distributions Leistner and
Nurowski \cite{leistner-nurowski-g2ambient} have found
a corresponding parallel spinor on the ambient metric of the $(2,3)$-conformal
structure. 
  \end{enumerate}
\end{rema}

\section{Decompositions of conformal Killing fields}\label{ckfdecomp}
The main result of this section is an explicit decomposition of infinitesimal conformal automorphisms via a transversal twistor spinor. 
Furthermore, we provide formulae relating almost Einsetin structures with a subset of twistor spinors.

\subsection{Infinitesimal  automorphisms of conformal spin structures associated to $2$ and $3$ distributions}\label{secinfinitaut}
Infinitesimal automorphisms of a conformal structure $\mc{C}$ are
\emph{conformal Killing fields}, i.e., vector fields $\xi\in\X(M)$
such that $\mathcal{L}_{\xi}g=f g$ for some $g\in\mc{C}$ and some $f\in
\cinf(M)$. Infinitesimal automorphisms of a distribution $\mathcal{D}$
are vector fields whose Lie derivatives preserve the distribution,
i.e., $\xi\in\X(M)$ such that
$\mathcal{L}_{\xi}\eta=[\xi,\eta]\in\Gamma(\mathcal{D})$ for all
$\eta\in\Gamma(\mathcal{D})$.

For the structures we are interested in  infinitesimal
automorphisms correspond to infinitesimal automorphisms of the
associated Cartan geometries. Our decomposition
result is based on a description of infinitesimal automorphisms of
a Cartan geometry $(\mathcal{G},\omega)$ as sections of the \emph{adjoint tractor bundle}
$\mathcal{A}M=\mathcal{G}\times_P\g$ that are parallel with respect to
the prolongation connection
\begin{equation}\label{infautc} \hat{\nabla}_{\xi}s=\nabla_{\xi}
s-\kappa(\xi,\Pi(s)),
\end{equation} see \cite{cap-infinitaut}.  Here
$\Pi:\mathcal{A}M=\mathcal{G}\times_{P}\mathfrak{g}\to\mathcal{G}\times_{P}\mathfrak{g}/\mathfrak{p}=TM$
is the natural projection and $\kappa$ is the curvature of $\omega$
viewed as an element of $\Omega^1(M,\mathcal{A}M)$.

Let $\mathfrak{g}\subset\tilde{\mathfrak{g}}$ be either the inclusion
$\mathfrak{g}_2\subset\mathfrak{so}(3,4)$, or
$\mathfrak{so}(3,4)\subset\mathfrak{so}(4,4)$. Then, as a
representation of $G$, i.e. $G_2$ or $\SO(3,4)$, we have a
decomposition
\begin{equation*}
\tilde{\mathfrak{g}}=\mathfrak{g}\oplus\mathbb{R}^{3,4}.
\end{equation*} It follows that the conformal adjoint tractor bundle
\begin{equation*}
\tilde{\mathcal{A}}M=\tilde{\mathcal{G}}\times_{\tilde{P}}\tilde{\mathfrak{g}}=\mathcal{G}\times_{P}\tilde{\mathfrak{g}}
\end{equation*} of a conformal structure associated to a generic
distribution decomposes into $\A M$\ and a $7$-dimensional
complementary bundle
$\mathcal{V}=\mathcal{G}\times_{P}\mathbb{R}^{3,4}$. The tractor
connection is natural and decomposes accordingly.

\begin{prop}\label{prop-adecomp}Suppose
$s=s_1+s_2\in\Gamma(\tilde{\mathcal{A}}M)=\Gamma(\mathcal{A}M)\oplus\Gamma(\mathcal{V})$
is the decomposition of a conformal adjoint tractor according to the
decomposition of $\tilde{\mathcal{A}}M$. Then $s$ is parallel with
respect to the infinitesimal automorphism connection on
$\tilde{\mathcal{A}}M$ if and only if $s_1$ is parallel with respect
to the infinitesimal automorphism connection on $\mathcal{A}M$ and
$s_2$ is parallel with respect to the tractor connection on
$\mathcal{V}$.
\end{prop}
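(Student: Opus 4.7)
The plan is to combine three ingredients: (a) the tractor connection on $\tilde{\mathcal{A}}M$ splits along the decomposition $\tilde{\mathcal{A}}M=\mathcal{A}M\oplus\mathcal{V}$; (b) the conformal Cartan curvature takes values in the $\mathcal{A}M$-summand; and (c) the footpoint projection $\tilde{\Pi}\colon\tilde{\mathcal{A}}M\to TM$ annihilates $\mathcal{V}$, so that $\tilde{\Pi}(s)=\Pi(s_1)$.

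For (a), the holonomy reduction from the Fefferman-type construction gives the commuting diagram of Cartan bundles from Section 3, so $\tilde{\omega}'$ reduces to the $G$-principal connection $\omega'$. Since this reduced connection preserves the $G$-invariant splitting $\tilde{\mathfrak{g}}=\mathfrak{g}\oplus\mathbb{R}^{3,4}$, the induced tractor connection $\tilde{\nabla}$ on $\tilde{\mathcal{A}}M=\mathcal{G}\times_P\tilde{\mathfrak{g}}$ decomposes as $\tilde{\nabla}_{\xi}s=\nabla^{\mathcal{A}}_{\xi}s_1+\nabla^{\mathcal{V}}_{\xi}s_2$ for $s=s_1+s_2$. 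For (b), the identity $\tilde{\kappa}=I\circ\kappa$ of equation \eqref{curvatures} shows that $\tilde{\kappa}$ takes values in $\mathcal{A}M\subset\tilde{\mathcal{A}}M$, so the correction term in $\hat{\nabla}_{\xi}s=\tilde{\nabla}_{\xi}s-\tilde{\kappa}(\xi,\tilde{\Pi}(s))$ contributes nothing to the $\mathcal{V}$-component. For (c), a direct inspection using the grading data of Propositions \ref{propg2} and \ref{propso34} shows that the $G$-module complement $\mathbb{R}^{3,4}$ lies inside the parabolic $\tilde{\mathfrak{p}}$; together with the standard identification $\mathfrak{g}/\mathfrak{p}\cong\tilde{\mathfrak{g}}/\tilde{\mathfrak{p}}$ this gives $\tilde{\Pi}(s_1)=\Pi(s_1)$ and $\tilde{\Pi}(s_2)=0$.

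Combining these three ingredients, for any $\xi\in\mathfrak{X}(M)$,
\begin{equation*}
\hat{\nabla}_{\xi}s=\tilde{\nabla}_{\xi}s-\tilde{\kappa}(\xi,\tilde{\Pi}(s))=\nabla^{\mathcal{A}}_{\xi}s_1+\nabla^{\mathcal{V}}_{\xi}s_2-\kappa(\xi,\Pi(s_1))=\hat{\nabla}^{\mathcal{A}}_{\xi}s_1+\nabla^{\mathcal{V}}_{\xi}s_2,
\end{equation*}
with the summands lying in the complementary subbundles $\mathcal{A}M$ and $\mathcal{V}$ respectively. Since the decomposition is direct, $\hat{\nabla}s=0$ is equivalent to the simultaneous vanishing of both summands, which is the claimed equivalence. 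The main obstacle is ingredient (c): verifying $\mathbb{R}^{3,4}\subset\tilde{\mathfrak{p}}$ requires identifying the $G$-invariant complement of $\mathfrak{g}$ in $\tilde{\mathfrak{g}}$ inside the filtration determined by $\tilde{\mathfrak{p}}$. The dimension count $\dim\tilde{\mathfrak{p}}-\dim\mathfrak{p}=7=\dim\mathbb{R}^{3,4}$ suggests the splitting $\tilde{\mathfrak{p}}=\mathfrak{p}\oplus\mathbb{R}^{3,4}$, and this can be confirmed directly from the bases in Propositions \ref{propg2} and \ref{propso34}.
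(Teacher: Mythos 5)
Your ingredients (a) and (b) are correct and coincide with the paper's approach: the tractor connection on $\tilde{\mathcal{A}}M$ decomposes along $\mathcal{A}M\oplus\mathcal{V}$, and $\tilde{\kappa}=I\circ\kappa$ shows the curvature takes values in $\mathcal{A}M$. Ingredient (c), however, is false: the $G$-invariant complement $\mathbb{R}^{3,4}$ of $\mathfrak{g}$ in $\tilde{\mathfrak{g}}$ is \emph{not} contained in $\tilde{\mathfrak{p}}$, so $\tilde{\Pi}$ does not annihilate $\mathcal{V}$. The dimension count $\dim\tilde{\mathfrak{p}}-\dim\mathfrak{p}=7$ only shows that $\mathfrak{p}$ has \emph{some} $7$-dimensional complement inside $\tilde{\mathfrak{p}}$, not that this complement is the $\mathfrak{g}$-invariant one. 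Concretely, in the $G_2$-case the unique equivariant embedding $\mathbb{R}^{3,4}\to\Lambda^2\mathbb{R}^{3,4}$ is $v\mapsto i_v\phi$ with $\phi(u,v,w)=B_{3,4}(u\cdot v\cdot w\cdot X,X)$; using the basis of Proposition \ref{propg2} one computes $B_{3,4}(e_-\cdot e_+\cdot r\cdot X,X)=-2\neq0$ while $h_{3,4}(e_+,r)=0$, so $(i_{e_-}\phi)(e_+)\notin\mathbb{R}e_+$ and hence $i_{e_-}\phi\notin\tilde{\mathfrak{p}}$. More tellingly, if $\tilde{\Pi}$ killed $\mathcal{V}$, then the underlying vector field of every conformal Killing field would equal that of its $\mathcal{A}M$-part, i.e.\ the almost Einstein scale (resp.\ orthogonal twistor spinor) component would always contribute the zero vector field --- contradicting the explicit nonzero formulas in Propositions \ref{thm-ckf23} and \ref{thm-ckf33}, which are the whole point of the decomposition.

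The ingredient the paper uses in place of (c) is genuinely different and nontrivial: a section $s_2\in\Gamma(\mathcal{V})$ that is parallel for the tractor connection inserts trivially into the curvature, i.e.\ $\tilde{\kappa}(\cdot,\tilde{\Pi}(s_2))=0$ (see \cite{gover-lapl_einstein}, \cite{mrh-sag-rank2}). With this, (b) shows the $\mathcal{V}$-component of $\hat{\nabla}_{\xi}s$ is exactly $\nabla^{\mathcal{V}}_{\xi}s_2$, so $\hat{\nabla}s=0$ forces $\nabla^{\mathcal{V}}s_2=0$; that in turn kills the term $\tilde{\kappa}(\xi,\tilde{\Pi}(s_2))$, and the $\mathcal{A}M$-component reduces to $\nabla^{\mathcal{A}}_{\xi}s_1-\kappa(\xi,\Pi(s_1))$. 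The converse direction runs the same way. Your argument is missing exactly this step: without it, $\tilde{\kappa}(\xi,\tilde{\Pi}(s_2))$ is an a priori nonzero contribution to the $\mathcal{A}M$-component of $\hat{\nabla}_{\xi}s$, and the claimed equivalence does not follow.
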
 
The proof  is discussed in detail the $G_2$-case in
\cite{mrh-sag-rank2}, the $\Spin(3,4)$-case is completely analogous.
Briefly, one uses that the curvature of the conformal structures associated
to such distributions takes indeed values in $\mathcal{A}M$, see
\cite{mrh-sag-rank2}, and that adjoint tractors in $\tilde{A}M$
are parallel for the tractor connection insert trivially into the
curvature, see \cite{gover-lapl_einstein},\cite{mrh-sag-rank2}.

\subsection{Explicit splitting formulas}
For the explicit decompositions in sections \ref{decomp23} and \ref{decomp33}
it will be necessary to collect some explicit differential
splitting formulas, which are particular instances 
of \emph{BGG-splitting operators} \cite{BGG-2001, BGG-Calderbank-Diemer}.

\subsubsection{Splittings of conformal Killing fields}
 We have discussed in section \ref{secinfinitaut} that
an infinitesimal symmetry of a conformal structure $(M,\mc{C})$\ is equivalent to
an adjoint tractor $s\in\Ga(\ti\A M)$\ that is parallel with respect
to \eqref{infautc}. To make the relation between the conformal Killing
fields $\xi\in\X(M)$\ and the adjoint tractor $s\in\Ga(\ti\A M)$\ explicit,
one employs the canonical differential splitting operator
$L_0^{\ti\A}$, that is given by \cite{mrh-bgg,mrh-thesis}
\begin{align}\label{L0La2}
  &L_0^{\ti A}:\X(M)\goesto\Ga(\La^2\mc{T}),\\
  &\xi^a\mapsto
  \begin{pmatrix}
        \begin{pmatrix}
          -\frac{1}{2n}D^pD_p\xi_{a}
          +\frac{1}{2n}D^pD_{a}\xi_{p} 
          +\frac{1}{n^2}D_{a}D^p\xi_{p}
          \\
          +\frac{2}{n}\mr{P}^p_{\; a}\xi_{p}
          -\frac{1}{n}J\xi_{a}
    \end{pmatrix}
    \\
    {D}_{[a_0}\xi_{a_1]} \; | \;
    -\frac{1}{n}\bg^{pq}{D}_p\xi_{q}
    \\
    \xi_{a}
  \end{pmatrix}.
\end{align}
  It follows from (\cite{cap-infinitaut}) that
\begin{lem}
  $\xi\in \X(M)$\ is a conformal Killing field if and only if $L_0^{\ti\A}(\xi)$\
  is parallel with respect to the connection \eqref{infautc}.
\end{lem}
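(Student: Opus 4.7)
The plan is to combine Čap's correspondence \cite{cap-infinitaut} between infinitesimal automorphisms of a parabolic geometry and $\hat\na$-parallel adjoint tractors with the universal property of the first BGG splitting operator $L_0^{\ti\A}$ constructed in \cite{BGG-2001,BGG-Calderbank-Diemer}. By the former, a vector field $\xi\in\X(M)$ is a conformal Killing field of $\mc C$ if and only if there exists a section $s\in\Ga(\ti\A M)$ with $\hat\na s=0$ and $\Pi(s)=\xi$. So the task is to identify $L_0^{\ti\A}(\xi)$ as this parallel lift exactly when one exists.

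Inspection of the formula \eqref{L0La2} gives $\Pi\circ L_0^{\ti\A}=\id_{\X(M)}$: the underlying projection simply reads off the slot containing $\xi_a$. This immediately settles the ``only if'' direction: if $L_0^{\ti\A}(\xi)$ is $\hat\na$-parallel, then $\xi=\Pi(L_0^{\ti\A}(\xi))$ is conformal Killing. For the converse, I would invoke the defining property of $L_0^{\ti\A}$: among all differential lifts $\X(M)\to\Ga(\ti\A M)$, the BGG splitting operator is singled out by the fact that every component of $\na^{\ti\A}L_0^{\ti\A}(\xi)$ other than the one encoding the first BGG operator $D_0\xi$ vanishes identically, and inserting \eqref{L0La2} into the induced skew-square of \eqref{sttracon} one checks that the surviving slot is exactly $D_{(c}\xi_{a)}-\tfrac{1}{n}\bg_{ca}D^p\xi_p$, i.e.\ the conformal Killing operator. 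Since the Cartan curvature $\kappa$ of the normal conformal geometry takes values in $\ker\Pi\subset\ti\A M$, the correction $-\kappa(\,\cdot\,,\Pi(s))$ in \eqref{infautc} affects only the slots that are already annihilated by the BGG construction; it cannot touch the complementary $D_0\xi$-component. Hence $\hat\na L_0^{\ti\A}(\xi)=0$ collapses to $D_0\xi=0$, i.e.\ to $\xi$ being conformal Killing.

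The main obstacle, if one wishes to avoid a black-box appeal to the BGG machinery, is the explicit slot-by-slot verification on $\La^2\mc T\cong\ti\A M$: one has to unpack \eqref{sttracon} into the induced connection $\na^{\ti\A}$, insert the three components of \eqref{L0La2}, and verify that the Schouten tensor and Weyl curvature contributions in $\na^{\ti\A}L_0^{\ti\A}(\xi)$ exactly cancel against the curvature correction in \eqref{infautc} in every slot except the bottom one, independently of $\xi$. This matching is routine but notationally heavy and is precisely the content of the construction of $L_0^{\ti\A}$ recorded in \cite{mrh-bgg,mrh-thesis}; combined with Čap's theorem it yields the lemma.
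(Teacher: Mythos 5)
Your overall architecture --- \v{C}ap's bijection between infinitesimal automorphisms and adjoint tractors parallel for \eqref{infautc}, combined with the characterization of the BGG splitting operator --- is exactly the route the paper takes (its proof is essentially the citation of \cite{cap-infinitaut}), and your argument for the direction ``$L_0^{\ti\A}(\xi)$ parallel $\Rightarrow$ $\xi$ conformal Killing'' is fine, since $\Pi\circ L_0^{\ti\A}=\id$ and parallel sections for \eqref{infautc} project onto infinitesimal automorphisms. The gap is in the converse. You assert that the defining property of $L_0^{\ti\A}$ is that every component of $\na^{\ti\A}L_0^{\ti\A}(\xi)$ other than the one carrying the first BGG operator vanishes identically, and later that the curvature contributions cancel in all remaining slots ``independently of $\xi$''. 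That is false. The splitting operator is characterized only by $\Pi\circ L_0^{\ti\A}=\id$ together with $\pa^*\bigl(\na^{\ti\A}L_0^{\ti\A}(\xi)\bigr)=0$; this places $\na^{\ti\A}L_0^{\ti\A}(\xi)$ in $\ker\pa^*=\im\pa^*\oplus\ker\square$, whose harmonic part is $D_0\xi$ but whose $\im\pa^*$-part is generically nonzero. Concretely, already on the flat model (where $\kappa=0$, so \eqref{infautc} reduces to $\na^{\ti\A}$) the higher slots of $\na^{\ti\A}_cL_0^{\ti\A}(\xi)$ are second- and third-order operators in $\xi$ with non-vanishing symbol, so they cannot vanish for arbitrary $\xi$; and if your claim held on a general conformal manifold, every solution of the conformal Killing equation would yield a parallel section for the \emph{unmodified} tractor connection, which is exactly what fails in general and is the reason the correction term $-\kappa(\xi,\Pi(s))$ is introduced at all.

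The vanishing of the higher slots of $\hat{\nabla}L_0^{\ti\A}(\xi)$ holds only modulo the conformal Killing equation and its differential consequences (the classical prolongation, in which second and third derivatives of $\xi$ are re-expressed through $\xi$, $D_{[a}\xi_{b]}$, the divergence and curvature); that is precisely the non-trivial content of the lemma and cannot be obtained from a purported identity valid for all vector fields. Two honest ways to close the gap: (a) use the full strength of \cite{cap-infinitaut}, namely that $\Pi$ restricts to a bijection from the space of sections of $\ti\A M$ parallel for \eqref{infautc} onto the space of conformal Killing fields, and then identify the inverse of this bijection with $L_0^{\ti\A}$ by checking that any such parallel section $s$ satisfies $\pa^*(\na^{\ti\A}s)=0$ and invoking the uniqueness of a lift of $\xi$ with this property; or (b) carry out the slot-by-slot computation using the prolonged conformal Killing equation rather than an identity claimed to hold independently of $\xi$.
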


We already have interpretations of parallel sections of the spin tractor
bundle as twistor spinors  and of parallel sections (with respect to a modified
connection) of the adjoint tractor bundle as conformal Killing fields.
We now recall the interpretation of the parallel sections of
the conformal standard tractor bundle:

\subsubsection{The splittings of almost Einstein scales}\label{BGGsplittings}

Let $s\in\Ga(\mc{T})$, then with respect to a $g\in\mc{C}$\
one has (recall section \ref{secstd})
$
  [s]_g=
  \begin{pmatrix}
    \rh \\
    \ph_a \\
    \si
  \end{pmatrix}
  \in \Ga([\mc{T}]_g)
$,
and the explicit transformation behaviour of $[s]_g$\ under a change
of metric shows that one has a canonical projection
$\Pi^{\mc{T}}_0:\mc{T}\goesto\ce[1]$.
Moreover, it can be seen from the definition of $\na^{\mc{T}}$\ and
some simple differential consequences, \cite{thomass}, that
the section $s\in\mc{T}$\ is $\na^{\mc{T}}$-parallel if and only if
$\si=\Pi_0^{\mc{T}}$\ satisfies 
the equation
\begin{align}\label{equaes}
  (D_aD_b\si+\mr{P}_{ab}\si)_0=0.
\end{align}
Given a solution $\si$\ of \eqref{equaes}, the
corresponding $\na^{\mc{T}}$-parallel tractor
is obtained by the splitting operator \cite{thomass}
\begin{align}\label{splitStd} L_0^{\mc{T}}:\ce[1]&\goesto
\Ga(\mc{T}),\\\notag \si&\mapsto
  \begin{pmatrix} -\frac{1}{n}g^{pq}(D_{pq}\si+\PP_{pq}\si) \\ D\si \\
\si
  \end{pmatrix}.
\end{align}

A solution $\si\in\Ga(\ce[1])$\ of \eqref{equaes}
has been termed an \emph{almost Einstein scale} by
R. Gover, \cite{gover-aEs},
since \eqref{equaes}\ turns out to be equivalent
to $\si^{-2}\bg$\ being \emph{Einstein}\ wherever $\si$\ is
non-zero.

\subsection{Explicit decomposition in signature $(2,3)$}\label{decomp23}
We start with conformal spin structures
of signature $(2,3).$ Given a generic twistor spinor $\chi$
and its tractor spinor $\mb{X}=L_0^{\mc{S}}(\chi)$, we consider the orthogonal
complement $\mb{X}^{\perp}$ of $\rr\mb{X}$\ in $\Ga(\mc{S})$\ with respect to $\mb{B}_{3,4}$. It
is easy to see that a twistor spinor $\eta\in\Ga(S[\frac{1}{2}])$\
splits into this complementary space if and only if
$\mb{b}_{2,3}(\eta,\dirac\chi)+\mb{b}_{2,3}(\chi,\dirac\eta)=0$. The space of 
 twistor spinors satisfying this equation shall be denoted by
$\mb{Tw}_{\mc{C}}^{\bot}(\chi)\subset\mb{Tw}_{\mc{C}}\subset\Ga(S[\frac{1}{2}])$.

\begin{lem}\label{lemaesckf} For a fixed generic twistor spinor $\chi$, we have a bijective correspondence
between almost Einstein scales and  twistor spinors $\eta$ such that $\mb{b}_{2,3}(\eta,\dirac\chi)+\mb{b}_{2,3}(\chi,\dirac\eta)=0$.
 An almost Einstein scale $\si\in\ce[1]$\ is mapped to
  \begin{align*}
\frac{2}{5}\si\dirac\chi+(D\si)\cdot\chi\in\mb{Tw}_{\mc{C}}^{\bot}(\chi).
  \end{align*}
and a twistor spinor $\eta\in\mb{Tw}_{\mc{C}}^{\bot}(\chi)$ is mapped to the almost Einstein scale $\sigma=\mb{b}_{2,3}(\chi,\eta)$.
\end{lem}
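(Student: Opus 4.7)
The plan is to exploit the holonomy reduction $\mathrm{Hol}(\mathcal{C}) \subset G_2 \subset \mathrm{Spin}(3,4)$ from Theorem \ref{thm-holchar23} together with the $G_2$-decomposition of the spin representation recorded in Proposition \ref{propg2}(4). The parallel non-null spin tractor $\mathbf{X} = L_0^{\mathcal{S}}(\chi)$ determines a parallel orthogonal splitting $\mathcal{S} = \mathbb{R}\mathbf{X} \oplus \mathbf{X}^{\perp}$, and on the fibre level $\mathbf{X}^{\perp}$ is the standard $7$-dimensional $G_2$-representation, which is also the restriction of the standard $\mathrm{Spin}(3,4)$-representation underlying $\mathcal{T}$. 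Hence there is a parallel, connection-preserving tractor bundle isomorphism $\mathcal{T} \cong \mathbf{X}^{\perp}$, which I would realise explicitly by tractor Clifford multiplication $\mathbf{S} \mapsto \mathbf{S} \cdot \mathbf{X}$ via \eqref{traCli}. That this map lands in $\mathbf{X}^{\perp}$ is a one-line check using \eqref{formb34}, \eqref{compatb23} and skew-symmetry of $b_{2,3}$; nonvanishing, and hence bijectivity between irreducible $G_2$-modules of equal dimension, is immediate.

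Under this isomorphism, parallel sections correspond to parallel sections. By \eqref{equaes} and \eqref{splitStd} together with section \ref{BGGsplittings}, parallel sections of $\mathcal{T}$ are exactly the almost Einstein scales. By Proposition \ref{prop-spin} together with \eqref{twisplit} and \eqref{formb34}, parallel sections of $\mathbf{X}^{\perp}$ are exactly those twistor spinors $\eta$ whose tractor $L_0^{\mathcal{S}}(\eta)$ is $\mathbf{B}_{3,4}$-orthogonal to $\mathbf{X}$, and this orthogonality unpacks to the stated condition
\[
\mathbf{b}_{2,3}(\chi,\dirac\eta) + \mathbf{b}_{2,3}(\eta,\dirac\chi) = 0
\]
defining $\mathbf{Tw}_{\mathcal{C}}^{\perp}(\chi)$. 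This already establishes the bijective correspondence abstractly.

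For the explicit forward formula I would plug $L_0^{\mathcal{T}}(\sigma)$ from \eqref{splitStd} and $\mathbf{X}$ from \eqref{twisplit} into the tractor Clifford action \eqref{traCli} and read off the bottom slot, obtaining $(D\sigma)\cdot\chi + \tfrac{2}{5}\sigma\dirac\chi$ as claimed. For the reverse formula, $\sigma = \mathbf{b}_{2,3}(\chi,\eta)$ is, up to normalisation, the composition of the inverse of Clifford multiplication with the canonical projection $\Pi_0^{\mathcal{T}}\colon \mathcal{T} \to \mathcal{E}[1]$, expressed through the only $G_2$-equivariant pairing available, namely $\mathbf{b}_{2,3}(\chi,\cdot)$.

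The main obstacle I expect is bookkeeping of normalisation constants: the two displayed maps are both bijections but are related by a nonzero scalar multiple (computable from the genericity normalisation $\mathbf{b}_{2,3}(\chi,\tau) = 1$) rather than being strict inverses. Verifying that the derivative term $(D\sigma)\cdot\chi$ contributes nothing to $\mathbf{b}_{2,3}(\chi,\eta)$ — which follows at once from \eqref{compatb23} combined with the skew-symmetry of $b_{2,3}$ — is the cleanest way to see that the scalar is indeed nonzero, so that the formula $\sigma = \mathbf{b}_{2,3}(\chi,\eta)$ correctly recovers the almost Einstein scale up to this universal constant.
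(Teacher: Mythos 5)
Your proposal is correct and follows essentially the same route as the paper: both identify $\mc{T}$ with $\mb{X}^{\perp}$ via the $G_2$-equivariant Clifford multiplication $v\mapsto v\cdot X$ on the parallel non-null spin tractor (using irreducibility of $\rr^{3,4}$ to get an isomorphism onto $X^{\perp}$), pass to associated bundles and parallel sections, and then extract the explicit formulas from \eqref{splitStd}, \eqref{twisplit} and \eqref{traCli}. Your remark that the two displayed maps are mutually inverse only up to a universal nonzero constant (computable from $\mb{b}_{2,3}(\chi,\dirac\chi)\not=0$, since the $(D\si)\cdot\chi$ term pairs trivially with $\chi$) is an accurate observation that the paper leaves implicit.
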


\begin{proof} Suppose $X\in\Delta^{3,4}$ with $B_{3,4}(X,X)\neq 0$ is 
stabilized by $G_2$. Consider the map
$\mathbb{R}^{3,4}\to\Delta^{3,4}$ given by Clifford multiplication on
$X$. Then this is a non-zero, $G_2$-equivariant map. Moreover,
$\mathbb{R}^{3,4}$ is an irreducible $G_2$-representation, and thus
the map must be an isomorphism onto its image, which is the orthogonal
complement $X^{\perp}$ to $\mathbb{R}X$ in $\Delta^{3,4}$ with respect
to the invariant bilinear form $B_{3,4}$. The inverse maps a spinor $Y$ to the unique element 
$v\in\mathbb{R}^{3,4}$ such that $h_{3,4}(v,w)B_{3,4}(X,X)=B_{3,4}(w\cdot X, Y)$ for all $w\in\mathbb{R}^{(3,4)}.$

Passing to associated bundles yields an identification of the standard tractor bundle $\mathcal{T}$ with
$\mathbf{X}^{\perp}$. In order to obtain the explicit formulas relating almost Einstein scales with the subset $\mb{Tw}_{\mc{C}}^{\bot}(\chi)$ of the space of twistor
spinors, we use the formulas $\eqref{splitStd}$, $\eqref{twisplit}$ and $\eqref{traCli}$ for the splitting operators $L_0^{\mc{T}}:\ce[1]\goesto
\Ga(\mc{T})$ and  $L_0^{\mc{S}}:\Ga(S[\frac{1}{2}])\goesto\Ga(\mc{S}) $ and for the Clifford  multiplication $\ga:\mc{T}\t\mc{S}\goesto\mc{S}$.

\end{proof}

We proceed to the decomposition of infinitesimal automorphisms in
terms of $\chi$.

\begin{prop}\label{thm-ckf23} Given a conformal spin structure of
signature $(2,3)$ and a generic twistor spinor
$\chi\in\Ga(S[\frac{1}{2}])$, the
space of conformal Killing fields decomposes into the space of almost
Einstein scales and the space of infinitesimal automorphisms of the
corresponding rank $2$-distribution.  Explicitly, for some $g\in \mc{C}$, the almost Einstein
scale part of a conformal Killing field $\xi\in\X(M)$\ is given by
  \begin{align*}
\si=\mb{b}_{2,3}(\chi,-\frac{4}{5}\xi\cdot\dirac\chi+(D_{[a}\xi_{b]})\cdot\chi)\in\ce[1].
  \end{align*} Conversely, an almost Einstein scale $\si\in\ce[1]$\ is
mapped to a conformal Killing field
  \begin{align*}
\xi_a=\mb{b}_{2,3}(\ga_a\chi,\frac{2}{5}\si\dirac\chi+(D\si)\cdot\chi)\in\ce_a[2]=\X(M)
  \end{align*}
\end{prop}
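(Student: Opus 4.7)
The plan is to combine the abstract decomposition supplied by Proposition \ref{prop-adecomp} with the explicit splitting operators \eqref{L0La2}, \eqref{splitStd}, \eqref{twisplit} and with Lemma \ref{lemaesckf}. A conformal Killing field $\xi\in\X(M)$ is equivalent to the $\hat\nabla$-parallel adjoint tractor $L_0^{\ti\A}(\xi)\in\Ga(\ti\A M)$. By Proposition \ref{prop-adecomp}, applied using the $G_2$-module decomposition $\ti\g=\g_2\oplus\rr^{3,4}$, this parallel section splits as $s_1+s_2$ with $s_1\in\Ga(\A M)$ parallel for the infinitesimal automorphism connection on $\A M$ (hence corresponding to a symmetry of $\mc D_\chi$) and $s_2\in\Ga(\V)$ parallel for the normal tractor connection. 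Since $\V=\G\times_P\rr^{3,4}=\ti\G\times_{\ti P}\rr^{3,4}=\mc T$, the section $s_2$ corresponds through $\Pi_0^{\mc T}$ to an almost Einstein scale $\si\in\Ga(\ce[1])$, and conversely every $\si$ arises from some conformal Killing field. This gives the abstract decomposition; what remains is to turn the projection $\ti\A M\to\mc T$ and its inverse into the formulae of the proposition.

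To do so, I would use the identification $\ti\A M\cong\La^2\mc T$ together with the isomorphism $\mc T\cong\mb X^\perp\subset\mc S$ obtained from Clifford multiplication by the parallel spin tractor $\mb X=L_0^{\mc S}(\chi)$, as in the proof of Lemma \ref{lemaesckf}. Under these identifications the algebraic $G_2$-projection $\La^2\rr^{3,4}=\g_2\oplus\rr^{3,4}\twoheadrightarrow\rr^{3,4}$ is realized by the Clifford action on $\mb X$: the map
$\La^2\mc T\to\mc S,\ s_1\wedge s_2\mapsto \tfrac12(s_1\cdot s_2-s_2\cdot s_1)\cdot\mb X$,
vanishes on the $\g_2$-summand (since $\g_2$ stabilises $\mb X$), and restricts on the complementary $\rr^{3,4}$ to a nonzero scalar multiple of the Clifford isomorphism $\mc T\isom\mb X^\perp$. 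Composing with $\Pi_0^{\mc S}\colon\mc S\to S[\tfrac12]$ produces a twistor spinor $\eta\in\mb{Tw}_{\mc C}^\bot(\chi)$, which by Lemma \ref{lemaesckf} is precisely the spinor associated to the almost Einstein scale $\si=\mb b_{2,3}(\chi,\eta)$.

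The remaining step is a direct substitution. Inserting \eqref{L0La2} for $L_0^{\ti\A}(\xi)$ and \eqref{twisplit} for $\mb X$ into the Clifford projection formula, using the Clifford rule \eqref{traCli} and the pairing \eqref{formb34}, each slot of $L_0^{\ti\A}(\xi)$ contributes a term in the spinor $\eta$ evaluated via $\mb b_{2,3}(\chi,\cdot\,)$. The top slot of $L_0^{\ti\A}(\xi)$ pairs with $\chi$ to zero by weight reasons; the middle slot $D_{[a}\xi_{b]}$ produces $\mb b_{2,3}(\chi,(D_{[a}\xi_{b]})\cdot\chi)$; the lowest slot $\xi$ couples through the Clifford factor on $\dirac\chi$ and produces the $-\tfrac45\,\xi\cdot\dirac\chi$ term. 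Collecting these gives the claimed formula for $\si$. For the inverse direction one starts with $L_0^{\mc T}(\si)\in\Ga(\mc T)$, applies Clifford multiplication by $\mb X$ to land in $\mb X^\perp\subset\Ga(\mc S)$, and reads off the vector field part of the associated element of $\La^2\mc T$: this is precisely the pairing $\mb b_{2,3}(\ga_a\chi,\tfrac25\si\dirac\chi+(D\si)\cdot\chi)$.

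The hard part is purely bookkeeping: matching the abstract Clifford projection $\La^2\mc T\to\mc T$ with the explicit slot-wise splitting operators and keeping track of the $\sqrt2$, $\tfrac15$, $\tfrac25$ and $-\tfrac45$ constants coming from \eqref{sttracon}, \eqref{traCli} and \eqref{twisplit}. The conceptual content, namely that the decomposition exists and is preserved by the relevant connections, is entirely subsumed by Proposition \ref{prop-adecomp} together with the identification of $\V$ with $\mc T$ and Lemma \ref{lemaesckf}.
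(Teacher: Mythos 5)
Your proposal follows essentially the same route as the paper's proof: the abstract decomposition comes from Proposition \ref{prop-adecomp}, the algebraic projection $\La^2\rr^{3,4}=\g_2\oplus\rr^{3,4}\to\rr^{3,4}$ is realized by the Clifford action on the stabilized spin tractor $\mb X$ composed with the isomorphism $\mb X^{\perp}\cong\rr^{3,4}$ from Lemma \ref{lemaesckf}, and the explicit constants are then obtained by substituting the splitting operators \eqref{L0La2}, \eqref{splitStd}, \eqref{twisplit}. The only content the paper records that you describe merely in words is the explicit inverse $\phi(u,v)\,B_{3,4}(X,X)=B_{3,4}(u\cdot v\cdot X,w\cdot X)$, but this is a minor presentational difference, not a gap.
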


\begin{proof} Proposition \ref{prop-adecomp} implies that conformal
Killing fields decompose into infinitesimal automorphisms of the
distributions and almost Einstein scales. Algebraically, the
projection $\so(3,4)=\Lambda^2\mathbb{R}^{3,4}\to\mathbb{R}^{3,4}$ is
given by  action of $\Lambda^2\mathbb{R}^{3,4}$ on the non-null spinor $X$ composed with the
isomorphism $X^{\perp}\cong\mathbb{R}^{3,4}$ from Lemma
\ref{lemaesckf}. 
The inverse of the map $\Lambda^2\mathbb{R}^{3,4}\to \mathbb{R}^{3,4}$ assigns to $w\in\mathbb{R}^{3,4}$ the unique element 
$\phi\in\Lambda^2\mathbb{R}^{3,4}=\Lambda^2(\mathbb{R}^{3,4})^*$ such that $\phi(u,v) B_{3,4}(X,X)=B_{3,4}(u\cdot v\cdot X,w\cdot X)$ for all $u,v\in\mathbb{R}^{3,4}$. 

The explicit decomposition in terms of $\chi$\ is then obtained using the algebraic maps and 
 the differential  BGG-splitting operators.
\end{proof}

\subsection{Explicit decomposition in signature $(3,3)$}\label{decomp33}
Suppose we have a conformal spin
structure of signature $(3,3)$, a generic twistor spinor
$\chi\in\Ga(S_{+}[\frac{1}{2}])$
and its tractor spinor $\mb{X}=L_0^{\mc{S}}(\chi)$. Again, we use the
notation $\mb{Tw}_{\mc{C}}^{\bot}(\chi)\subset\mb{Tw}_{\mc{C}}\subset\Ga(S_{+}[\frac{1}{2}])$
for the space of  twistor spinors satisfying
$\mb{b}_{3,3}(\eta,\dirac\chi)+\mb{b}_{3,3}(\chi,\dirac\eta)=0$; these are mapped
via the splitting operator \eqref{twisplit} into the orthogonal complement
$\mb{X}^{\perp}$.

\begin{prop}\label{thm-ckf33} Given a conformal spin structure of
signature $(3,3)$ and a generic twistor spinor
$\chi\in\Ga(S[\frac{1}{2}])$, 
the space of conformal Killing fields decomposes into the space of
$\mb{Tw}_{\mc{C}}^{\bot}(\chi)$ and the space of
infinitesimal automorphisms of the corresponding rank
$3$-distribution.  Explicitly, for a $g\in \mc{C}$, a conformal Killing field
$\xi\in\X(M)$\ is mapped to the  twistor spinor
  \begin{align*}
\eta=-\frac{2}{3}\xi\cdot\dirac\chi+(D_{[a}\xi_{b]})\cdot\chi+\frac{1}{6}(\de\xi)\chi\in\mb{Tw}_{\mc{C}}^{\bot}(\chi)
  \end{align*} and a twistor spinor
$\eta\in\mb{Tw}_{\mc{C}}^{\bot}(\chi)$\ is mapped to the conformal Killing field
  \begin{align*} \xi_a=\mb{b}_{3,3}(\chi,\ga_a\eta)\in\ce_a[2]=\X(M).
  \end{align*}
\end{prop}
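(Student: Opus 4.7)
The plan is to mirror closely the proof of Proposition \ref{thm-ckf23}. First, applying Proposition \ref{prop-adecomp} to the inclusion $\mf{so}(3,4) \subset \mf{so}(4,4)$ supplies the abstract decomposition: a conformal Killing field corresponds to a section $s = s_1 + s_2 \in \Gamma(\mc{A}M) \oplus \Gamma(\mc{V})$, parallel for \eqref{infautc}, in which $s_1$ is an infinitesimal symmetry of $\mc{D}_\chi$ and $s_2$ is $\nabla^{\mc{V}}$-parallel. What remains is to identify $\nabla^{\mc{V}}$-parallel sections of $\mc{V} = \mc{G} \times_P \rr^{3,4}$ with $\mb{Tw}_{\mc{C}}^{\bot}(\chi)$ and then to unwind the splittings into explicit formulas.

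Second, I would set up the algebraic identification $\mc{V} \cong \mb{X}^{\bot} \subset \mc{S}_+$ in a manner parallel to Lemma \ref{lemaesckf}. Clifford multiplication on the fixed non-null spinor $X \in \Delta^{4,4}_+$ gives a $\Spin(3,4)$-equivariant map $\rr^{4,4} \to \Delta^{4,4}_+$; by \eqref{formb44} it takes values in the $B_{4,4}$-orthogonal complement $X^{\bot}$ and, $X$ being non-null, is an isomorphism onto it. Composing with the orthogonal projection coming from the $\Spin(3,4)$-invariant splitting $\rr^{4,4} = \rr X \oplus \rr^{3,4}$ yields the required equivariant isomorphism $\rr^{3,4} \cong X^{\bot}$. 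Passing to associated bundles, $\nabla^{\mc{V}}$-parallel sections of $\mc{V}$ correspond, via Proposition \ref{prop-spin}, to twistor spinors $\eta \in \Gamma(S_+[\tfrac{1}{2}])$ whose lift $L_0^{\mc{S}}(\eta)$ is $B_{4,4}$-orthogonal to $\mb{X}$; expanding this orthogonality using \eqref{twisplit} and \eqref{formb44} gives exactly the defining condition $\mb{b}_{3,3}(\eta,\dirac\chi) + \mb{b}_{3,3}(\chi,\dirac\eta) = 0$ of $\mb{Tw}_{\mc{C}}^{\bot}(\chi)$.

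Third, the explicit formulas come from composing three conformally invariant operations: apply $L_0^{\tilde{A}}$ from \eqref{L0La2} to $\xi$, act by the resulting adjoint tractor on the parallel spin tractor $\mb{X} = (\tfrac{\sqrt{2}}{6}\dirac\chi, \chi)$ using the Clifford action \eqref{traCli}, and finally project by $\Pi_0$ to the lowest spinor slot. The bottom component $\xi$ of $L_0^{\tilde{A}}(\xi)$, acting as a vector via \eqref{traCli}, contributes $-\tfrac{2}{3}\xi \cdot \dirac\chi$ after tracking the factor $\tfrac{\sqrt{2}}{6}$; the skew middle part $D_{[a}\xi_{b]}$ contributes $(D_{[a}\xi_{b]}) \cdot \chi$; and the trace part $-\tfrac{1}{n}\de\xi = -\tfrac{1}{6}\de\xi$ acts as a scalar, contributing $\tfrac{1}{6}(\de\xi)\chi$. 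Summing yields the asserted formula for $\eta$. For the inverse direction, one lifts $\eta$ by $L_0^{\mc{S}}$, applies the inverse of the algebraic map $\rr^{3,4} \cong X^{\bot}$ (characterized by $h_{3,4}(v,w) = B_{4,4}(w \cdot X, Y)$ for all $w \in \rr^{4,4}$), and reads off the bottom slot; using \eqref{traCli}, \eqref{formb44}, and identifying $\ce_a[2] \cong TM$ via $\bg$, one obtains $\xi_a = \mb{b}_{3,3}(\chi, \ga_a \eta)$. The main technical obstacle is the bookkeeping of the numerical coefficients, in particular producing the $\tfrac{1}{6}(\de\xi)\chi$ term from the trace slot of $L_0^{\tilde{A}}(\xi)$; this requires careful cross-referencing of \eqref{L0La2}, \eqref{twisplit} and \eqref{traCli}, but brings no essentially new difficulty beyond the signature $(2,3)$ computation underlying Proposition \ref{thm-ckf23}.
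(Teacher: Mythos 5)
Your overall architecture matches the paper's: Proposition \ref{prop-adecomp} gives the abstract decomposition, an equivariant algebraic identification of $\mc{V}=\mc{G}\times_P\rr^{3,4}$ with $\mb{X}^{\perp}$ converts parallel sections of $\mc{V}$ into $\mb{Tw}_{\mc{C}}^{\bot}(\chi)$, and the explicit formulas come from unwinding the splitting operators. However, your second step — the key algebraic identification — is set up with a map that does not exist. You propose to use single Clifford multiplication $v\mapsto v\cdot X$ as a map $\rr^{4,4}\to\De_+^{4,4}$. In even dimensions Clifford multiplication by a vector reverses chirality, so $v\cdot X$ lies in $\De_-^{4,4}$, not $\De_+^{4,4}$; this map is instead the isomorphism $\rr^{4,4}\cong\De_-^{4,4}$ that underlies Proposition \ref{prop-aes}, relating almost Einstein scales to \emph{negative} twistor spinors. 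Moreover, the source representation is wrong: the complement of $\so(3,4)$ in $\so(4,4)$ is a copy of $\rr^{3,4}$ sitting inside $\La^2\rr^{4,4}$, not inside $\rr^{4,4}$. There is also no $\Spin(3,4)$-invariant splitting $\rr^{4,4}=\rr X\oplus\rr^{3,4}$ as you claim: $X$ lives in $\De_+^{4,4}$, and by Proposition \ref{propso34} the standard representation $\rr^{4,4}$ restricts to $\Spin(3,4)$ as the irreducible spin representation, so it admits no such decomposition. The correct identification, which the paper uses, is the Lie-algebra action $\La^2\rr^{4,4}=\so(4,4)\to\De_+^{4,4}$, $\phi\mapsto\phi\cdot X$ (a composition of two Clifford multiplications, hence chirality-preserving), restricted to the subrepresentation $\rr^{3,4}\subset\so(4,4)$; this lands in $X^{\perp}\subset\De_+^{4,4}$ and is an isomorphism onto it.

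Your third paragraph implicitly uses the correct map (you act by the full adjoint tractor $L_0^{\tilde{\A}}(\xi)\in\Ga(\La^2\mc{T})$ on $\mb{X}$ and project), so it is internally inconsistent with your second paragraph; your proposed inverse, characterized by pairing $w\cdot X$ against $Y$ under $B_{4,4}$, also fails as written, since for $Y\in X^{\perp}\subset\De_+^{4,4}$ the spinors $w\cdot X\in\De_-^{4,4}$ and $Y$ have opposite chirality and are not paired by $B_{4,4}$ as defined in \eqref{formb44}. To repair the argument you should characterize the inverse $X^{\perp}\to\rr^{3,4}$ through the quadratic action, e.g. via $\phi(u,v)\,B_{4,4}(X,X)=B_{4,4}(u\cdot v\cdot X,\,\cdot\,)$ in the spirit of the inverse map written out in the proof of Proposition \ref{thm-ckf23}. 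With that correction the remainder of your outline (coefficient bookkeeping via \eqref{L0La2}, \eqref{twisplit} and the Clifford action) coincides with what the paper does.
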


\begin{proof} The map
$\mathfrak{so}(4,4)=\Lambda^2\mathbb{R}^{4,4}\to\Delta_{+}^{4,4}$
given by action on the $\Spin(3,4)$-invariant element
$X\in\Delta_{+}^{4,4}$ provides an identification of the
subrepresentation $\mathbb{R}^{3,4}\subset\mathfrak{so}(4,4)$ with
$X^{\perp}\subset\Delta_{+}^{4,4}.$ Employing Proposition
\ref{prop-adecomp} we thus have a decomposition of conformal Killing
fields into infinitesimal automorphisms of the distribution and
 twistor spinors $\mb{Tw}_{\mc{C}}^{\bot}(\chi)$. 
Again, the explicit maps are obtained 
via  
simple computations using differential splitting
formulas \eqref{L0La2} and \eqref{twisplit}.
 \end{proof}

In this case almost Einstein scales don't correspond to infinitesimal
automorphisms, but again, they can be identified with a subset of
twistor spinors. For this, note that Clifford multiplication on the non-null spinor
$X\in\Delta_{+}^{4,4}$ defines an isomorphism $\mathbb{R}^{4,4}\cong
\Delta^{4,4}_{-}$ and therefore have:

\begin{prop}\label{prop-aes} There is a bijective correspondence
between almost Einstein scales and negative twistor spinors.  An
almost Einstein scale $\si\in\ce[1]$\ is mapped to the negative
twistor spinor
  \begin{align*}
\eta=\frac{1}{3}\si\dirac\chi+(D\si)\cdot\chi\in\Ga(S_-[\frac{1}{2}]).
  \end{align*} Conversely, a negative twistor spinor
$\eta\in\Ga(S_-[\frac{1}{2}])$\ is mapped to the almost Einstein scale
$\si=\mb{b}_{3,3}(\chi,\eta)\in\ce[1]$.
\end{prop}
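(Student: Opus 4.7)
The plan is to reduce the claimed bijection to a parallel isomorphism of tractor bundles. By the discussion in Section \ref{BGGsplittings}, the splitting operator $L_0^{\mc{T}}$ of \eqref{splitStd} identifies almost Einstein scales with $\na^{\mc{T}}$-parallel sections of the standard tractor bundle $\mc{T}$, and by Proposition \ref{prop-spin} applied in negative chirality, $L_0^{\mc{S}}$ of \eqref{twisplit} identifies negative twistor spinors with $\na^{\mc{S}}$-parallel sections of $\mc{S}_-$. It therefore suffices to produce a parallel bundle isomorphism $\mc{T}\isom\mc{S}_-$, which will automatically carry parallel sections to parallel sections.

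The key algebraic input is implicit in Proposition \ref{propso34}(4): as $\Spin(3,4)$-representations, both $\rr^{4,4}$ and $\De^{4,4}_-$ restrict to the irreducible spin module $\De^{3,4}$. Clifford multiplication by the stabilized non-null element $X\in\De^{4,4}_+$ defines a $\Spin(3,4)$-equivariant map $\rr^{4,4}\goesto\De^{4,4}_-$, $w\mapsto w\cdot X$. By Schur's lemma it is either zero or an isomorphism; the Clifford identity $w\cdot w\cdot X=-h_{4,4}(w,w)X$ shows that it is non-zero. Passing to associated bundles, Clifford multiplication by the parallel spin tractor $\mb{X}=L_0^{\mc{S}}(\chi)$ furnishes a pointwise isomorphism $\mc{T}\isom \mc{S}_-$. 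It is parallel because $\na^{\mc{S}}\mb{X}=0$ together with the Leibniz-rule compatibility of the tractor connections on $\mc{T}$ and $\mc{S}$ with the Clifford action \eqref{traCli}.

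To make the forward map explicit, I would substitute $L_0^{\mc{T}}(\si)$ from \eqref{splitStd} into formula \eqref{traCli} applied to $L_0^{\mc{S}}(\chi)$ and read off the projection to $S_-[\tfrac{1}{2}]$, i.e.\ the bottom slot of the resulting negative spin tractor. With $n=6$ and $\tau=\tfrac{\sqrt 2}{6}\dirac\chi$, the bottom slot is
\begin{align*}
(D\si)\cdot\chi+\sqrt{2}\,\si\,\tau=(D\si)\cdot\chi+\tfrac{1}{3}\si\,\dirac\chi,
\end{align*}
which is exactly the $\eta$ in the statement. The inverse map is obtained by inverting the Clifford isomorphism $w\mapsto w\cdot X$ by means of the invariant pairing $b_{3,3}$: the relation $b_{3,3}(\chi,\xi\cdot\chi)=0$ for $\xi\in\rr^{3,3}$ kills the term $(D\si)\cdot\chi$ under pairing against $\chi$, and the remaining term reproduces the bottom-slot density $\si$ of the corresponding standard tractor, proportional to $\mb{b}_{3,3}(\chi,\eta)$. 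This parallels the argument used for Lemma \ref{lemaesckf} in signature $(2,3)$, with the difference that Clifford multiplication by $\mb{X}$ now flips chirality and lands in $\mc{S}_-$ rather than in the orthogonal complement of $\mb{X}$ inside $\mc{S}$.

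The main obstacle here is not computational but conceptual: one must verify that the $\Spin(3,4)$-isomorphism $\rr^{4,4}\cong\De^{4,4}_-$ recorded in Proposition \ref{propso34}(4) is realized concretely by $w\mapsto w\cdot X$, and that this pointwise isomorphism transfers to a globally parallel bundle isomorphism. With both checks in place the explicit formulas follow by direct substitution into the BGG splitting operators.
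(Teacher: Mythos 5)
Your argument is correct and takes essentially the same route as the paper, whose entire justification is the sentence preceding the proposition: Clifford multiplication on the stabilized non-null spinor $X\in\De^{4,4}_+$ gives a $\Spin(3,4)$-equivariant isomorphism $\rr^{4,4}\cong\De^{4,4}_-$, which is passed to tractor bundles and made explicit through the splitting operators \eqref{splitStd} and \eqref{twisplit}. You in fact supply more detail than the paper does (the Schur/non-vanishing check and the explicit bottom-slot computation), and your formulas agree with the stated ones.
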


\subsection{Remark on the normal conformal Killing forms  induced by the generic twistor spinors}
Fixing the a generic twistor spinor $\chi$ gives rise to more  maps from twistor spinors to normal conformal Killing $k$-forms: 
 In signature $(2,3)$ we have an inclusion from twistor spinors into normal conformal Killing $2$-forms. In particular, a generic twistor spinor $\chi$ gives rise to\linebreak $\phi_2:=\mb{b}_{2,3}(\chi,\ga_{[a}\ga_{b]}\chi)\in\ce_{[ab]}[3]$.
In \cite{mrh-sag-rank2} we use the existence of such a conformal Killing $2$-form to characterize the conformal structures associated to $2$-distributions.

In signature $(3,3)$ we have an inclusion from negative twistor spinors into normal conformal Killing $2$-forms and an inclusion of positive twistor spinors into normal conformal Killing $3$-forms. In particular, in this case we always have a non-trivial normal conformal Killing $3$-form, which is given by\linebreak
$\phi_3:=\mb{b}_{3,3}(\chi,\ga_{[a}\ga_{b}\ga_{c]}\chi)\in\ce_{[abc]}[4]$.

  Since $\phi_2$\ and $\phi_3$\ correspond to to pure spinors they are
decomposable and therefore insertion into the form has a $3$-dimensional
kernel in both cases. In signature $(3,3)$\ this is already the
canonical $3$-distribution associated to $\phi_3$, and in signature $(2,3)$\
the canonical $2$-distribution is formed by the intersection of
the kernel of $\bg$\ with the kernel of $\phi_2$.

\end{document}